\newtheorem{theorem}{Theorem}
\newtheorem{defi}{Definition}[section]
\newtheorem{prop}{Proposition}[section]
\newtheorem{remark}{Remark}[section]
\newtheorem{lem}{Lemma}[section]
\newtheorem*{claim*}{Claim}
\newcommand{\eps}{\varepsilon}
\newcommand{\lb}{\label}
\newcommand{\go}{\rightarrow}
\newcommand{\ee}{\end{equation}}
\newcommand{\be}{\begin{equation}}
\newcommand{\bea}{\begin{eqnarray}}
\newcommand{\eea}{\end{eqnarray}}
\newcommand{\sbea}{\begin{subequations}\begin{eqnarray}}
\newcommand{\seea}{\end{eqnarray}\end{subequations}} 
\newcommand{\ees}{\end{equation*}}
\newcommand{\bes}{\begin{equation*}}
\newcommand{\beas}{\begin{eqnarray*}}
\newcommand{\eeas}{\end{eqnarray*}}
\newcommand{\rf}[1]{(\ref{#1})}
\newcommand{\Z}{\mathbb{Z}}
\newcommand{\N}{\mathbb{N}}
\newcommand{\mR}{\mathbb{R}}
\DeclareMathOperator{\dist}{dist}
\DeclareMathOperator{\Per}{Per}
\DeclareMathOperator{\D}{\Delta}
\DeclareMathOperator{\p}{\partial}
\DeclareMathOperator{\conv}{conv}
\DeclareMathOperator{\inte}{int}
\DeclareMathOperator{\essinf}{essinf}
\begin{document}

\title{Surface Energies Emerging in a Microscopic, Two-Dimensional Two-Well Problem}

\author{Georgy Kitavtsev\footnote{ School of Mathematics, University of Bristol, University Walk, Clifton, Bristol BS8 1TW. {\tt E-mail}: Georgy.Kitavtsev@bristol.ac.uk}, 
Stephan Luckhaus\footnote{Mathematical Institute, University of Leipzig, 04009 Leipzig, Germany. {\tt E-mail}: stephan.luckhaus@math.uni-leipzig.de},
and 
Angkana R\"uland\footnote{Mathematical Institute, University of Oxford, Andrew Wiles Building, Radcliffe Observatory Quarter, Woodstock Road, Oxford OX2 6GG, United Kingdom. {\tt E-mail}: ruland@maths.ox.ac.uk}}
\date{\today}

\maketitle

\begin{abstract}
In this article we are interested in the microscopic modeling of a two-dimensional two-well problem which arises from the square-to-rectangular transformation in (two-dimensional) shape-memory materials. In this discrete set-up, we focus on the surface energy scaling regime and further analyze the Hamiltonian which was introduced in \cite{KLR14}. It turns out that this class of Hamiltonians allows for a direct control of the discrete second order gradients and for a one-sided comparison with a two-dimensonal spin system. Using this and relying on the ideas of Conti and Schweizer \cite{CS06}, \cite{CS06a}, \cite{CS06c}, which were developed for a continuous analogue of the model under consideration, we derive a (first order) continuum limit. This shows the emergence of surface energy in the form of a sharp-interface limiting model as well the  explicit structure of the minimizers to the latter.
\end{abstract}

\tableofcontents

\section{Introduction}

In this article we are concerned with the modeling of a discrete,
two-dimensional square-to-rectangular martensitic phase transition in the
regime of surface energy scaling. Due to their interesting thermodynamical and
mathematical behavior, martensitic phase transitions, being examples of
diffusionless, solid-solid phase transitions, have attracted a large amount of
attention (c.f \cite{B03} and \cite{Mue} for overviews). Hence, a number of
models for these phase transitions, describing them from both microscopic and
macroscopic points of view, exist in the literature. In the present article we
continue to analyze the microscopic discrete model for the
square-to-rectangular martensitic transition which was introduced
in~\cite{KLR14}. In particular, we compare it with its continuous analogues,
which have been considered previously in the literature.

\subsection{Macroscopic, continuum models}
Before describing our microscopic, discrete model, we recall the most commonly used features of \emph{macroscopic, continuum models} for martensitic phase transitions (see e.g~\cite{BJ87,KM92,CO12,C00,CS06} and the references therein). In this context, a classical modeling approach is the analysis of \emph{purely elastic multi-well energies} of the form
\begin{align}
\label{eq:energy_c}
\int\limits_{\Omega} W(\nabla u) dx.
\end{align} 
Here $W:  \mR^{2\times 2}  \rightarrow [0,\infty)$ is an $SO(2)$ invariant, in general non-quasiconvex (bulk) \emph{energy density} which describes the energy cost of deforming a \emph{reference configuration} $\Omega$ into its image configuration $u(\Omega)$ by the \emph{deformation} $u:\Omega  \rightarrow  \mR^{2}$ which is considered under appropriate boundary conditions. It is assumed that the deformation, $u$, and the associated \emph{deformation gradient}, $\nabla u$, are in appropriate Sobolev spaces which are determined by the growth conditions imposed on the energy density $W$.\\
In modeling our phase transitions, we focus on the regime, in which the martensitic phase is favored and $W$ has multiple \emph{energy wells}, i.e. there exist $U_1,\dots,U_m \subset  \mR^{2\times 2}_{sym}$ such that $W(M) =0 $ if and only if $M\in \bigcup\limits_{k=1}^{m} SO(2)U_k$.
Deformations $u$ which (almost everywhere) satisfy $\nabla u \in \bigcup\limits_{k=1}^{m} SO(2)U_k$ are denoted as \emph{exactly stress-free states}. If there are \emph{rank-one connections} between the energy wells, i.e. if for $U_k, U_{k'}$, with $k\neq k'$, there exist a rotation $Q\in SO(2)$ and vectors $a\in  \mR^2 \setminus \{0\}, n\in \mathbb{S}^1$ such that
\begin{align*}
U_k - Q U_{k'} = a \otimes n,
\end{align*}
then examples of stress-free states are provided by so-called \emph{simple laminates}. These are Lipschitz continuous deformations $u$ which only depend on the variable $n\cdot x$ and whose gradient alternates between the two values $U_k$, $QU_{k'}$, e.g.
\begin{align*}
\nabla u(x) \in \left\{ 
\begin{array}{ll}
U_k &\mbox{ if } x\cdot n \geq 0,\\
Q U_{k'} &\mbox{ if } x\cdot n <0.
\end{array}
 \right.
\end{align*}
However, under general boundary conditions, due to the non-quasiconvex nature of the energy density, $W$, exact minimizers of (\ref{eq:energy_c}) do not exist. Instead, in many cases infimizing sequences display highly oscillatory behavior.\\

In order to remedy this non-existence issue and the ``unphysical'', infinitely fine oscillations, \emph{higher order regularizations} are added \cite{KM92,CO12,C00,CS06}, leading to energies like for instance
\begin{align}
\label{eq:energy1}
\int\limits_{\Omega} W(\nabla u) dx + \epsilon \int\limits_{\Omega}|\nabla^2 u|^2 dx \mbox{ with } \epsilon>0.
\end{align} 
These additional higher order contributions are interpreted as \emph{surface energies} since they penalize oscillations and transitions between different energy wells. Due to compactness, in general in the associated spaces minimizers to \rf{eq:energy1} exist and display characteristic length scales (c.f. \cite{KM92}, \cite{CO12}, \cite{C00}).\\
While the basic intention of higher order regularizations always consists of
penalizing too high oscillations, their precise functional form for
macroscopic models is in general not known (from experiments for
instance). Hence a number of different possible regularizations exist, which
range from various kinds of diffuse to sharp interface models. Strikingly,
experiments show the presence of both diffuse and sharp interfaces around twin planes for different materials \cite{BVTA87, BMC09}. 
Hence, in order to answer the question which of these energies is appropriate in which situation,
a more ``first principles'' approach coming from microscopic considerations seems to be desirable. 

\subsection{The microscopic two-well problem}
While the previously described models have had enormous success in predicting material patterns and microstructure, they are all continuum models. As such they are \emph{macroscopic} and ``phenomenological''. In order to develop a more rigorous foundation for these and other macroscopic models in mathematical physics, there has been a great activity in introducing \emph{microscopic} discrete models describing different phenomena in mathematical physics and relating them to their continuum analogues, see e.g. \cite{Br13} and the references therein. 
In these microscopic models it is assumed that the elastic sample is given as a deformation of a ground state (atomic) lattice, e.g. $(n^{-1}\Z)^2$ or subsets thereof. This deformation is energetically described by a Hamiltonian, i.e. a sum of local energies originating from
interaction of the ``atoms'' involved in the microscopic sample.
The described discrete models have been thoroughly analyzed in one-dimension (e.g. for elastic chains) \cite{BC07}. As in the continuous analogue, vectorial problems are less well understood. Here various approaches are pursued \cite{LM10}, \cite{TF02}, \cite{BBL02}, \cite{AC04}, \cite{BS12}, \cite{Ro12}. Moreover, a direct comparison of the scaling behavior of a discrete and a continuous model of a two-dimensional two-well problem is given in \cite{Lo06}, \cite{Lo09}.
In the context of the vectorial set-up in martensitic phase transitions a key (mathematical) difficulty which distinguishes it from the one-dimensional case is the presence of a \emph{continuum} of energy wells, i.e. $\bigcup\limits_{j=1}^{k} SO(2)U_j$.\\

In the sequel, we address a specific two-dimensional martensitic phase transition, the
square-to-rectangular phase transition, from a microscopic point of view. In the following
subsections, we introduce our precise set-up based on the discrete two-well Hamiltonian (with $SO(2)$ symmetry) and present our main results.

\subsubsection{Setting}
\label{sec:setting}
In this section we describe the basic set-up of our discrete two-well
problem. We define the underlying domains and function spaces and explain our explicit model Hamiltonian. 
In the sequel, we seek to model the two-dimensional square-to-rectangular phase transition in the martensitic phase in which the variants of martensite constitute the energy wells. For this purpose we introduce the following energy wells 
\begin{align}
\label{eq:wells}
K:=SO(2)U_0  \cup SO(2)U_1,  \mbox{ where } U_0 = \begin{pmatrix} a & 0 \\  0 & b \end{pmatrix}, \ U_1 = \begin{pmatrix} b & 0 \\ 0 & a \end{pmatrix}, \ a\neq b.
\end{align}
Although for the mathematical treatment of our problem this is not necessary, we restrict ourselves to the most relevant physical situation of volume preserving transformations. This corresponds to the assumption $ab=1$. 
In this study, for convenience of notation, we denote by $c$ any positive
constant depending only (if not stated explicitly otherwise) on the lattice parameters $a$ and $b$. Moreover, we often use the special constant
\be
\bar{c}:=\dist(SO(2)U_0,\,SO(2)U_1).
\lb{cb}
\ee
We recall that for every deformation $U\in SO(2)U_0$ there are exactly two rank-one connected matrices in $SO(2)U_1$, i.e. there exist (exactly) two rotations $Q, \tilde{Q}$ such that
\begin{equation}
\begin{split}
U_0-QU_1&=\sqrt{2}\frac{a^2-b^2}{a^2+b^2}\begin{pmatrix} a\\-b \end{pmatrix}\otimes\frac{1}{\sqrt{2}}\begin{pmatrix}1\\ 1\end{pmatrix},\\ 
U_0-\tilde{Q}U_1&=\sqrt{2}\frac{a^2-b^2}{a^2+b^2}\begin{pmatrix} a\\b \end{pmatrix}\otimes\frac{1}{\sqrt{2}}\begin{pmatrix}1\\ -1\end{pmatrix}.
\label{eq:ROC}
\end{split}
\end{equation} 
Macroscopically, we expect that a shape memory alloy with these wells can form two variants of simple laminates (without bulk energy cost), where the normals to the jump planes are either given by $\begin{pmatrix} 1 \\ 1 \end{pmatrix}$ or $\begin{pmatrix} 1 \\ -1 \end{pmatrix}$. Our first compactness result (c.f. Proposition  \ref{prop:rig2}) shows that indeed this \emph{macroscopic} expectation can be justified by starting from a \emph{microscopic} point of view and passing to the corresponding continuum limit. 
Moreover, we note that due to the rank-one connections between the wells, $U_0$ and $QU_1$ are both rank-one connected with their convex combinations
\begin{align}
\label{eq:Fl}
F_{\lambda}:= \lambda U_0 + (1-\lambda) Q U_1,
\end{align}
where $Q$ is the rotation from (\ref{eq:ROC}) and $\lambda\in (0,1)$.\\

In order to prepare the definition of our model Hamiltonian which is formed by a sum of local energies $h$ having the set $K$ as the set of their local minimizers, we  first define the precise set-up regarding the underlying domains and function spaces (c.f. Figure \ref{fig:setup}):

\begin{figure}
\includegraphics[width=\textwidth]{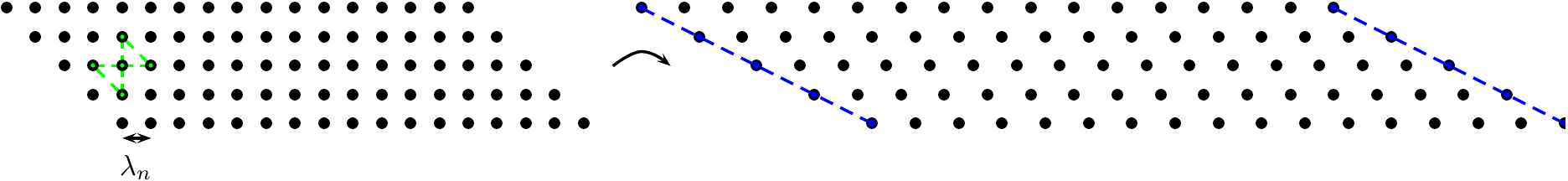}
\caption{The set-up of our problem. On the left the reference domain $\Omega_n$ is depicted. The green triangles correspond to two of the grid triangles $\Delta^{n,\pm}_{ij}$. $\Omega_n$ is mapped into an image configuration, shown on the right. In the image configuration, admissibility as formulated in (\ref{eq:bc}) has to be preserved. This includes the non-interpenetration condition but also the attainment of the correct boundary data along the blue lines.}
\label{fig:setup}
\end{figure}

\begin{defi}[Domains, deformations, admissibility]
\label{defi:setup}
In the sequel, we consider the following domains:
\begin{itemize}
\item For $x\in \mR^2$, $d,l\in \mR$, we define the translated parallelograms
\begin{equation}
\label{eq:parallel}
\begin{split}
\Omega_{d,l}^{+}(x)&:= \conv\{(-d-l,l),(d-l,l),(d+l,-l),(l-d,-l)\} + x,\\
\Omega_{d,l}^{-}(x)&:= \conv\{(l-d,l),(l+d,l),(-l-d,-l),(d-l,-l)\} + x.
\end{split}
\end{equation}
If $x=(0,0)$, we also omit the point in the notation and simply write $\Omega_{d,l}^{+}$, $\Omega_{d,l}^{-}$.
\item Using the previous notation, we set $\Omega:=\Omega_{4,1}^{+}$ and $\Omega_n:= \Omega \cap (n^{-1}\Z)^2$.
\item In the sequel, we work on the following triangles:\\
$\Delta_{i,j}^{n,+}:= \conv\{(i/n,j/n),(i/n,(j+1)/n),((i+1)/n,j/n)\}$, \\
$\Delta_{i,j}^{n,-}:= \conv\{(i/n,j/n),((i-1)/n,j/n),(i/n,(j-1)/n)\}$.
\item $G^n(\Omega_n)$ denotes the set of all edges involved in the triangles $\Delta_{i,j}^{n,\pm}\subset \Omega_n$. With slight abuse of notation, we also refer to ``the grid $\Omega_n$'' in the sequel, by which we mean the pair $(\Omega_n,\Delta_{i,j}^{n\pm})$ for $(i,j)\in n\Omega_n$.
\end{itemize}
Moreover, for any given set $M\subset \mR^2$ and any $n\in \N$ we define
\begin{align*}
nM:= \{(i,j): (i/n,j/n)\in M\}.
\end{align*}
In particular, this defines the sets $n\Omega$ and $n\Delta_{ij}^{n,\pm}$.\\
On the respective domains, we consider associated deformations $u\in C(\Omega,\, \mR^2) \cap H^1(\Omega,\, \mR^2)$ and denote their values on lattice points as
\begin{align}
\label{eq:evalI}
u^{i,j}:=u(i/n,\,j/n)\quad\text{for}\quad (i,j)\in n\Omega. 
\end{align}
Additionally we restrict ourselves to \emph{admissible} lattice deformations $u$  satisfying a \emph{non-interpenetration} condition,
i.e. on any domain $\Omega_{d,l}^{\pm}$ under consideration $u\in\mathcal{A}_{n \Omega_{d,l}^{\pm}}$, where
\begin{equation}
\label{eq:non}
\begin{split}
\mathcal{A}_{n\Omega_{d,l}^{\pm}}& := \{ v\in C(\Omega_{d,l}^{\pm},\, \mR^2) \cap H^1(\Omega_{d,l}^{\pm},\, \mR^2):\\
& \quad \mbox{for all } n\Delta_{ij}^{n,\pm}\in\Omega_{d,l}^{\pm},\quad \det(v^{ij}-v^{kl},v^{rs}-v^{kl})>0,\\
& \quad \mbox{where } (k,l),\,(r,s)\in \Z^2\mbox{ are adjacent grid vertices in } n\Delta_{ij}^{n,\pm}\}.
\end{split}
\end{equation}
Finally, we impose \emph{boundary conditions} prescribed by the matrices from \rf{eq:Fl} on admissible deformations considered on the whole domain $\Omega$, 
i.e. for any admissible deformation defined on the whole domain $\Omega$, we demand that $u\in\mathcal{A}_n^{F_\lambda}$, where
\begin{equation}
\label{eq:bc}
\begin{split}
\mathcal{A}_n^{F_\lambda}& := \{ v\in \mathcal{A}_n(\Omega):\\
&\quad v^{ij} = F_{\lambda}(i/n,j/n)   \mbox{ for all } i+j \leq -4n,\\
& \quad v^{ij} = F_{\lambda}(i/n,j/n) + c \mbox{ for some } c\in \mathbb{R}^2 \mbox{ for all } i+j \geq 4n\}.
\end{split}
\end{equation}
\end{defi}
Let us comment on these notions. We remark that we can easily switch from functions $u:\Omega  \rightarrow  \mR^2$ to \emph{grid functions} $u^{ij}:n \Omega  \rightarrow  \mR^2$ using the definition given in (\ref{eq:evalI}) above. Conversely, starting from a discrete lattice function $u^{ij}:n \Omega  \rightarrow  \mR^2$, we can pass to a function $u\in C(\Omega,\, \mR^2)$ by
piecewise affine interpolation on the triangles $n\Delta_{i,j}^{n,\pm}$. Hence most of the functions $u\in  \mathcal{A}_n^{F_\lambda}$ in our applications will be piecewise affinely interpolated lattice functions satisfying (\ref{eq:bc}). Thus, with slight abuse of notation, in the sequel we will use the phrase ``let $\{u_n\}_{n\in\N}\subset \mathcal{A}_n^{F_{\lambda}}$ be a sequence of piecewise affine functions on $\Omega_n$'' to denote a sequence of admissible functions that is affine on all of the grid triangles $\Delta_{i,j}^{n,\pm}\subset \Omega$.\\
In the context of our admissible lattice functions, the non-interpenetration condition contained in (\ref{eq:non}) corresponds to requiring that the labeling of the lattice triangles $n \Delta_{i,j}^{n,\pm}$ is not reversed or interchanged under the deformation $u$. Hence, it can be interpreted as a local invertibility constraint for piecewise affine deformations $u$ on each of the lattice triangles $\Delta_{i,j}^{n,\pm} \subset \Omega$ (c.f. the recent article of Braides and Gelli, \cite{BG15}, for a criticism of this in the context of discrete-to-continuum fracture mechanics).

\begin{remark}
\label{rmk:interpol}
By applying the conventions from Definition 1.1 and in particular by interpreting $u:\Omega \rightarrow \mR^2$ as the above described piecewise affine interpolation of the lattice deformation $u^{i,j}$, the usual spatial gradient, $\nabla u$ of the deformation is well-defined and satisfies $\nabla u \in PC(\Omega, \mR^{2\times 2})$. Here $PC(\Omega, \mR^{2\times 2})$ denotes the space of piecewise constant matrix valued functions.
In addition, we further agree on working with the following Lebesgue representative for the equivalence class of our deformation gradient $\nabla u$: On the edges of the lattice triangles $\partial (\Delta_{i,j}^{n,\pm})$ we define the gradient of $u$ to be equal to its value in the interior of the corresponding triangle $\Delta_{k,l}^{n,+}$ which contains the considered edge. Also at the lattice point $(i/n,j/n)\in \Omega_n$ the gradient is identified with the one on $\Delta_{i,j}^{n,+}$. Using this convention, the abbreviations
\beas
&&\nabla u^{i,j}:=\nabla u(i/n,\,j/n),\quad \partial_s u^{i,j}:=\partial_s u(i/n,\,j/n)\quad\mbox{for}\quad s=1,2,
\eeas
which are used in the sequel, are properly defined.\\
In Propositions \ref{prop:twell} and \ref{prop:reduc} we will also deal with functions $\phi^{ij}_{u}$ of $\nabla u\in PC(\Omega, \mR^{2\times 2})$, which are compositions of Lipschitz functions $\phi$ and $\nabla u$. Consequently, these are piecewise constant themselves. In this context, we will work with discrete gradients which we denote by $\nabla_n \phi^{ij}_{u}$. Here, for any lattice function $f^{ij}:\Omega_n \rightarrow \mR$, we set
\begin{align}
\label{eq:discr_grad}
\nabla_n f^{ij} = (n( f^{i+1,j}- f^{i,j}), n(f^{i,j+1}-f^{i,j}))^t.
\end{align}
\end{remark}

\begin{remark}
Note that in order to avoid additional technicalities connected with boundary effects and in order to keep the leading order of the bulk elastic energy zero in all considerations below we define our reference domain $\Omega$ as the parallelogram from Definition 1.1 (with sides orthogonal to one of the normals of the rank-one connections from \rf{eq:ROC}). 
\end{remark}

Keeping these conventions in mind, we proceed with the definition of our model Hamiltonian, which had already been previously introduced in~\cite{KLR14}.

\begin{defi}[Model Hamiltonian, I] 
\label{defi:HamiltonianI}
Let $u\in C(\Omega, \  \mR^2) \cap H^1(\Omega,\, \mR^2)$. Then the \emph{model Hamiltonian} on the lattice $n\Omega$ is defined as
\begin{eqnarray}
\tilde{H}_n(u)&=&\sum\limits_{(i,j)\in n\Omega}\tilde{h}_u^{i,j}:=\sum\limits_{i,j\in n\Omega} \lambda_{n}^2 \tilde{h}\left(\frac{u^{ij}-u^{i\pm 1j}}{\lambda_{n}},\frac{u^{ij}-u^{ij \pm 1}}{\lambda_{n}} \right)\nonumber\\
&:=&\sum_{(i,j)\in n\Omega}\lambda_{n}^{2}\left[\left(\left(\frac{u^{ij\pm 1}-u^{ij}}{\lambda_n} \right)^2- a^2 \right)^2+\left(\left(\frac{u^{i\pm 1j}-u^{ij}}{\lambda_n} \right)^2- b^2 \right)^2  \right. \nonumber\\
&&\left.+ \left|\left(\frac{u^{ij\pm 1}-u^{ij}}{\lambda_n},\frac{u^{i\pm 1j}-u^{ij}}{\lambda_n} \right) \right| \right]\times\nonumber\\
&\times&\left[\left(\left(\frac{u^{ij\pm 1}-u^{ij}}{\lambda_n} \right)^2-  b^2 \right)^2+\left(\left(\frac{u^{i\pm 1j}-u^{ij}}{\lambda_n} \right)^2- a^2 \right)^2  \right.
\nonumber\\ 
&&\left.+\left|\left(\frac{u^{ij\pm 1}-u^{ij}}{\lambda_n},\,\frac{u^{i\pm 1j}-u^{ij}}{\lambda_n} \right) \right| \right ],
\label{HD}
\end{eqnarray}
where $\lambda_n:= n^{-1}$ and $(\cdot,\,\cdot)$ denotes the scalar product in $\mR^2$.
In the above definition and below, we use a summation agreement: The sign $\pm$ in a term indicates that the latter should be replaced by the sum of the respective terms having all possible sign combinations, e.g.
\begin{eqnarray*}
&&\left((u^{ij\pm 1}-u^{ij})^2-(\lambda_n  a)^2 \right)^2:=\\
&&:=\left((u^{ij + 1}-u^{ij})^2-(\lambda_n  a)^2 \right)^2+\left((u^{ij - 1}-u^{ij})^2-(\lambda_n  a)^2 \right)^2,\\[2ex]
&& (u^{ij\pm 1}-u^{ij},\,u^{i\pm 1j}-u^{ij}):=\\
&&:=(u^{ij-1}-u^{ij},\,u^{i-1j}-u^{ij})+(u^{ij-1}-u^{ij},\,u^{i+1j}-u^{ij})\\
&&+(u^{ij+1}-u^{ij},\,u^{i-1j}-u^{ij})+(u^{ij+1}-u^{ij},\,u^{i+1j}-u^{ij}).
\end{eqnarray*}
\end{defi}

\begin{remark}[Boundary conditions]
\label{rmk:boundary_cond}
We stress that in our model we impose ``hard'' boundary conditions in the form of (\ref{eq:bc}). Already at this stage we emphasize that they are ``seen'' by our Hamiltonian, as for points $(i,j)$ on the layers  with $i+j=-4n$, $i+j=4n$ the Hamiltonian still takes the left and right neighbors of these into account. On these the boundary conditions have already been prescribed. This will give rise to boundary layer energies (c.f. Theorem \ref{Th1}).
\end{remark}

The Hamiltonian in Definition \ref{defi:HamiltonianI} is constructed in such a way that the matrices from (\ref{eq:wells}) indeed form its energy wells, i.e. $\tilde{H}_n(u)\geq 0$ for all admissible $u$ and $\tilde{H}_n(u)=0$ if and only if $\nabla u \in SO(2)U_0$ or $\nabla u \in SO(2)U_1$ on the whole of $\Omega$.
On the level of the local energies $\tilde{h}^{i,j}_{u}$ the deviation from the wells is measured by penalizing deformations which do not map horizontal and vertical unit line segments onto line segments of either the lengths $a$ or $b$. Physically, this corresponds to two-body interactions between the five neighboring atoms $(i/n,j/n)$, $(i\pm 1/n,j/n)$ and $(i/n,j\pm 1/n)$. Moreover, the local energy $\tilde{h}^{ij}_u$ favors deformations which enforce that orthogonal line segments are mapped to orthogonal line segments, i.e. deviations from orthogonality of the pairs $(i\pm 1/n,j/n)$ and $(i/n,j\pm 1/n)$ are penalized.
The latter is physically achieved by three-body interactions measuring the angle between $u^{i\pm 1,\,j}$ and $u^{i,\,j\pm 1}$, respectively.
We emphasize that the condition on the angles is necessary in modeling the deformation of an elastic body, as otherwise no shear resistance would be present.

\begin{remark}
\label{rmk:bracket}
Using the notation from Definition \ref{defi:setup}, we can rewrite the brackets in the definition of the Hamiltonian (\ref{HD}) in terms of lengths and angles of the horizontal and vertical derivatives of the deformations. For instance, the first bracket in (\ref{HD}) turns into
\begin{equation}
\label{eq:bracket1}
\begin{split}
&\bar{h}_{U_0}(\partial_1 u^{i,j},\partial_1 u^{i-1,j},\partial_2 u^{i,j},\partial_2 u^{i,\,j-1}):=(|\partial_1 u^{i,j}|^2 - a^2)^2+ (|\partial_1 u^{i-1,j}|^2-a^2)^2 \\
&+ (|\partial_2 u^{i,j}|^2 - b^2)^2 + (|\partial_2 u^{i,j-1}|^2-b^2)^2+  |(\partial_1 u^{i\pm 1,j},\partial_2 u^{i,j\pm 1})|.
\end{split}
\end{equation}
Hence the local energy density $\tilde{h}:\, \mR^2 \times \mR^2 \times \mR^2 \times \mR^2 \go  \mR$
\begin{align*}
(\partial_1 u^{i,j},\partial_1 u^{i-1,j},\partial_2 u^{i,j},\partial_2 u^{i,\,j-1})&\mapsto \tilde{h}(\partial_1 u^{i,j},\partial_1 u^{i-1,j},\partial_2 u^{i,j},\partial_2 u^{i,\,j-1})\\
& \quad :=\bar{h}_{U_0}(\partial_1 u^{i,j},\partial_1 u^{i-1,j},\partial_2 u^{i,j},\partial_2 u^{i,\,j-1})\\
& \quad \quad  \times  \bar{h}_{U_1}(\partial_1 u^{i,j},\partial_1 u^{i-1,j},\partial_2 u^{i,j},\partial_2 u^{i,\,j-1})
\end{align*}
can also be regarded as a function of the lengths and angles formed by the corresponding partial derivatives of the deformation.
Here
\begin{align*}
\bar{h}_{U_1}(\partial_1 u^{i,j},\partial_1 u^{i-1,j},\partial_2 u^{i,j},\partial_2 u^{i,\,j-1}):= \bar{h}_{U_0}(\partial_2 u^{i,j},\partial_2 u^{i,j-1},\partial_1 u^{i,j},\partial_1 u^{i-1,\,j}).
\end{align*}
This also serves as a guiding intuition in defining a more general class of Hamiltonians for which our results are valid (see Definition \ref{defi:Hamiltonian2} below).\\
In this sense the density $\tilde{h}$ in the Hamiltonian from Definition \ref{defi:HamiltonianI} (and later also the ones from Definitions \ref{defi:HamiltonianII} and \ref{defi:Hamiltonian2}) can be viewed as a Lipschitz continuous function. In order to construct the Hamiltonian it is composed with the discrete function $u^{ij}$ (or, depending on which point of view is more suitable in the respective situation, with the piecewise constant function $\nabla u$). \\
For the individual brackets of the Hamiltonian from Definition \ref{defi:HamiltonianI} we introduce the notation
\begin{align*}
\bar{h}^{ij}_{u, U_0}&:= \bar{h}_{U_0}(\partial_1 u^{i,j},\partial_1 u^{i-1,j},\partial_2 u^{i,j},\partial_2 u^{i,\,j-1}),\\
\bar{h}^{ij}_{u, U_1}&:= \bar{h}_{U_1}(\partial_1 u^{i,j},\partial_1 u^{i-1,j},\partial_2 u^{i,j},\partial_2 u^{i,\,j-1}).
\end{align*}
This notation is used in the Step 2 of the proof of the $\Gamma$-convergence result in Section \ref{sec:Gamma}.
\end{remark}

Apart from this geometric interpretation of the energy density, the definition of $\tilde{h}$ also implies the following pointwise control on the distance of $\nabla u$ to the energy wells $K$.

\begin{lem}[Lower bound]
\label{lem:lower}
Let $u\in \mathcal{A}_n^{F_\lambda}$. Then for each $(i,j)\in n\Omega$ the following bound holds:
\begin{align*}
\tilde{h}^{i,j}_u \geq c \dist(\nabla u^{i,j}, K)^2.
\end{align*}
In particular,
\begin{align*}
H_n(u) \geq c \int\limits_{\Omega} \dist(\nabla u, K)^2 dx.
\end{align*} 
\end{lem}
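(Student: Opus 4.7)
Writing $v_1 := \partial_1 u^{i,j}$ and $v_2 := \partial_2 u^{i,j}$, so that $\nabla u^{i,j} = (v_1\,|\,v_2)$ with $\det(v_1,v_2) > 0$ by the non-interpenetration condition, the product form exhibited in \rf{eq:bracket1} allows me to drop the nonnegative contributions involving the neighbouring derivatives $\partial_1 u^{i-1,j}$ and $\partial_2 u^{i,j-1}$ and reduce to
\begin{align*}
\tilde h_u^{i,j} \geq G_0(v_1,v_2)\cdot G_1(v_1,v_2), \qquad G_k(v_1,v_2) := (|v_1|^2-\alpha_k^2)^2 + (|v_2|^2-\beta_k^2)^2 + |\langle v_1, v_2\rangle|,
\end{align*}
with $(\alpha_0,\beta_0)=(a,b)$, $(\alpha_1,\beta_1)=(b,a)$. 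Under the sign constraint $\det(v_1, v_2) > 0$, the zero set of $G_k$ is precisely $SO(2)U_k$, so it suffices to show $G_0 G_1 \geq c\,\dist((v_1\,|\,v_2), K)^2$. Writing $d_k := \dist((v_1\,|\,v_2), SO(2)U_k)$, the triangle inequality together with the definition \rf{cb} of $\bar c$ forces $d_0 + d_1 \geq \bar c$, so one of them is $\geq \bar c/2$; WLOG $d_1 \geq \bar c/2$, whence $\dist(\cdot,K)^2 = d_0^2$. The plan is to prove $G_1 \geq c > 0$ and $G_0 \geq c d_0^2$ separately, and to multiply.

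The bound $G_1 \geq c > 0$ on $\{d_1 \geq \bar c/2\}$ follows from a standard compactness argument, since $G_1$ is continuous on $\{\det > 0\}$, vanishes only on $SO(2)U_1$, and grows like $|v_1|^4 + |v_2|^4$ at infinity. The bound $G_0 \geq c d_0^2$ is the delicate one. On a small $SO(2)$-invariant neighbourhood of $SO(2)U_0$ I exploit the $SO(2)$-invariance of $G_0$ to reduce to $\nabla u^{i,j} = U_0 + (\epsilon_1\,|\,\epsilon_2)$ with $|\epsilon_k|$ small. A direct Taylor expansion then produces
\begin{align*}
G_0 \gtrsim (\epsilon_1^{(1)})^2 + (\epsilon_2^{(2)})^2 + |b\epsilon_1^{(2)} + a\epsilon_2^{(1)}|,
\end{align*}
while the tangent to $SO(2)U_0$ at $U_0$ is spanned in $(\epsilon_1,\epsilon_2)$-coordinates by $(ae_2, -be_1)$, so an orthogonal decomposition gives
\begin{align*}
d_0^2 \sim (\epsilon_1^{(1)})^2 + (\epsilon_2^{(2)})^2 + \frac{(b\epsilon_1^{(2)} + a\epsilon_2^{(1)})^2}{a^2+b^2}.
\end{align*}
Since $|z| \geq z^2/C$ whenever $|z| \leq C$, the linear-in-angle contribution in $G_0$ dominates its square on a bounded neighbourhood of $U_0$, giving $G_0 \geq c d_0^2$ there. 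On the intermediate regime $d_0 \in [\delta_1, \bar c/2]$ with $|v_k|$ bounded, compactness again produces a uniform positive lower bound on $G_0$ exceeding the then-bounded $d_0^2$; and for large $|v_k|$ the quartic growth of $G_0$ dominates the at-most-quadratic growth of $d_0^2$.

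Multiplying the two bounds yields the pointwise inequality $\tilde h_u^{i,j} \geq c\,\dist(\nabla u^{i,j}, K)^2$. The integral version then follows by summing $\lambda_n^2 \tilde h_u^{i,j}$ over the lattice and recognising the result as a Riemann sum comparable to $\int_\Omega \dist(\nabla u, K)^2\, dx$, using that $\nabla u$ is piecewise constant on triangles of area $\lambda_n^2/2$ (with analogous control on the ``$-$''-triangles obtained from the terms in $\tilde h_u^{i,j}$ involving $\partial_1 u^{i-1,j}$ and $\partial_2 u^{i,j-1}$, or equivalently by re-indexing the sum). I expect the main obstacle to be that the orthogonality penalty $|\langle v_1, v_2\rangle|$ appears only \emph{linearly} in the angular deviation, which naively looks too weak to bound the quadratic $d_0^2$. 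The resolution is the observation that on bounded sets a linear lower bound is in fact \emph{stronger} than a quadratic one (via $|z| \geq z^2/C$), so the seemingly deficient penalty is exactly what is required for the estimate to close.
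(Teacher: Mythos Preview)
Your proposal is correct and in fact considerably more detailed than the paper's own proof, which consists of the single assertion that for $\nabla u^{i,j}\in GL_+(2,\mR)$
\[
\min\{\,||\partial_1 u^{i,j}|-a|+||\partial_2 u^{i,j}|-b|,\ ||\partial_1 u^{i,j}|-b|+||\partial_2 u^{i,j}|-a|\,\}^2 + |(\partial_1 u^{i,j},\partial_2 u^{i,j})| \geq c\,\dist(\nabla u^{i,j},K)^2,
\]
with no further justification. Your organisation via the product $G_0 G_1$ --- bounding the ``far'' factor below by a positive constant via compactness and growth, and the ``near'' factor by $c\,d_0^2$ via a Taylor expansion at the well --- is a clean way to unpack exactly this inequality; the paper's $\min$-formulation is equivalent in spirit. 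Your observation that the seemingly weak \emph{linear} angular penalty $|\langle v_1,v_2\rangle|$ actually dominates the quadratic contribution to $d_0^2$ on bounded sets (via $|z|\geq z^2/C$) is precisely the mechanism that makes the estimate close, and is left entirely implicit in the paper.
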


\begin{proof}
The proof follows immediately from noticing that for $\nabla u^{i,j} \in GL(2, \mR)_+ $ 
\begin{align*}
&\min\{||\partial_1 u^{i,j}| - a|+ ||\partial_2 u^{i,j}| - b|, ||\partial_1 u^{i,j}| - b|+ ||\partial_2 u^{i,j}| - a|\}^2\\
& \quad + |(\partial_1 u^{i,j},\,\partial_2 u^{i,j})| \geq c \dist(\nabla u^{i,j},\, K)^2.
\end{align*}
\end{proof}

We further remark that the Hamiltonian defined in Definition \ref{defi:HamiltonianI} is of mixed $L^2-L^8$ growth:
If on $\Delta_{ij}^{n,\pm}$ the gradient $\nabla u$ is close to the wells, the local energy $h$ is comparable to $\dist^2(\nabla u_n, SO(2)U_0 \cup SO(2)U_1)$, while if $\nabla u$ is at a finite distance from the wells, the energy controls the $L^8$ norm of $\nabla u$. Hence, in total
\begin{align*}
\tilde{H}_{n}(u_n) \geq c\int\limits_{\Omega} \max\{ \dist^8 (\nabla u, K), \dist^2(\nabla u, K) \}.
\end{align*}
In order to avoid technical difficulties with the mixed growth behavior at infinity, we truncate the energy density for large gradient values.

\begin{defi}[Hamiltonian, II]
\label{defi:HamiltonianII}
Let $u\in  \mathcal{A}_n^{F_\lambda}$. Let $k:(\mR^{2})^4 \rightarrow \mR$ be a Lipschitz continuous function satisfying the bound
\bes
c_1|F|^2 \leq k(F) \leq c_2 |F|^2,
\ees
for any $F\in (\mR^{2})^4$ and for some universal constants $c_1, c_2 >0$. Using this function, we define a modified energy density as
\begin{align}
\label{eq:mod}
h_u^{i,j}:=\gamma (|U^{i,j}|) \tilde{h}_u^{i,j} + (1-\gamma)(|U^{i,j}|) k(U^{ij}),
\end{align}
with $U^{i,j}:=(\partial_1 u^{i,j},\partial_1 u^{i-1,j},\partial_2 u^{i,j},\partial_2 u^{i,\,j-1})$ and a cut-off function $\gamma\in C^\infty(\mR,\mR)$ which is chosen such that $\gamma(F)=1$ for all $|F| \leq 10(\bar{c}+1)$ and $\gamma (F)= 0$ for $|F|\geq 20(\bar{c}+1)$. Here the constant $\bar{c}$ is the one from \rf{cb}.
Using this, we define the \emph{(final) model Hamiltonian} as 
\begin{align*}
H_n(u)&:=\sum\limits_{(i,j)\in n\Omega} \lambda_{n}^2 h^{i,j}_u. 
\end{align*}
\end{defi}

\begin{remark}
We remark that the energy density from Definition \ref{defi:HamiltonianII} in particular satisfies $L^2$ bounds at infinity and for each $(i,\,j)\in n\Omega$ the global estimate
\begin{align}
\lb{e:lb}
\dist^2(\nabla u^{i,j}, SO(2)U_0  \cup SO(2)U_1)  \lesssim h^{i,j}_u \lesssim \dist^2(\nabla u^{i,j}, SO(2)U_0  \cup SO(2)U_1)
\end{align}
holds.
\end{remark}

In concluding this section we stress that the following results are not only valid for our model Hamiltonian from Definition  \ref{defi:HamiltonianII} but hold for the following more general class of discrete Hamiltonians.

\begin{defi}[General class of Hamiltonians]
\label{defi:Hamiltonian2}
For any $u\in\mathcal{A}_n^{F_\lambda}$ let $h_u^{ij}$ be the density from Definition \ref{defi:HamiltonianII} and let $\hat{h}_{\cdot} \in C^{0,1}((\mR^2)^4,\mR)$  be such that (\ref{e:lb}) is satisfied and the following lower bound holds:
\begin{align}
\label{eq:lower_a}
\hat{h}_u^{i,j} \geq c_1 h_u^{i,j} ,
\end{align}
for each $(i,j)\in n\Omega$ and some positive constant $c_1$. Here
the abbreviation $\hat{h}^{ij}_u$  is used as above to denote the pointwise evaluation $\hat{h}_u(i/n,j/n)$ at $(i,j)\in n\Omega$ (analogously as the evaluations defined in Remark \ref{rmk:bracket}). We set
\begin{align*}
H_n(u):= \sum\limits_{(i,j)\in n\Omega} n^{-2} \hat{h}^{i,j}_u.
\end{align*}
\end{defi}
We emphasize two important properties of this class of Hamiltonians: These are the lower bound from Lemma \ref{lem:lower} (which follows from that for $h^{ij}_u$) and the lower bound (\ref{eq:lower_a}), which allows us to invoke the comparison arguments from Appendix \ref{sec:spin}. The lower bound (\ref{eq:lower_a}) contains the origin of the surface energies which are more closely analyzed in Section \ref{surface}.\\
As there are only small modifications in the proofs of the following results, we always carry them out for our model Hamiltonian $H_n$ from Definition \ref{defi:HamiltonianII} and leave the corresponding modifications for the general class to the reader.

\subsubsection{Main results}
Our main objective in this article is an analysis of the discrete square-to-rectangular phase transition in the regime of surface energy scaling. 
In this context we will analyze the microscopic Hamiltonians from Definitions \ref{defi:HamiltonianII} and \ref{defi:Hamiltonian2} on ``low energy deformations''. More precisely, as in \cite{KLR14} in the sequel we study sequences of admissible deformations $\{u_n\}_{n\in \N}$ for which 

\begin{align}
\label{eq:surfscaling}
H_{n}(u_n)\leq \frac{C}{n}, 
\end{align}
for some (in $n$) uniform constant $0<C<\infty$. 
As an immediate property of this scaling, we observe that, since the Hamiltonian controls the distance of $\nabla u$ to the wells, we in particular obtain $ \dist(\nabla u, SO(2)U_0 \cup SO(2)U_1)  \rightarrow 0$ in measure for deformations satisfying (\ref{eq:surfscaling}).\\
As in the case of atomic chains, which were investigated in \cite{KLR14}, we expect that this scaling in $n$ (in combination with the boundary conditions given by  (\ref{eq:Fl})) yields deformations which are locally simple laminates in the limit $n \rightarrow \infty$ (see Proposition \ref{prop:rig2} below).\\

In this context our main interest is driven by the modeling side of the problem and by the question of whether the discrete problem can be regarded as an ``equivalent'' of the continuous regularization.
Mathematically, our analysis is strongly based on the fundamental ideas introduced in the treatment of the continuum version of the two-well problem by Conti and Schweizer \cite{CS06}, \cite{CS06a}, \cite{CS06c}.
Let us also mention that these methods differ substantially from the ones introduced in our preceding paper \cite{KLR14}, where, after a special reduction of the Hamiltonian from Definition \ref{defi:HamiltonianI} onto one-dimensional chains, low energy deformations were treated both analytically and numerically. Due to the presence of ``atomic chains'', additional structural conditions had been exploited in that context.\\
 
As the main result of this article, we prove that in the surface energy scaling regime and in the continuum limit, i.e. as $n \rightarrow \infty$, there exists a limiting surface energy which resembles the analogous limiting energy from the continuous set-up \cite{CS06}. This result is formulated 
precisely in the following theorem.
\begin{theorem}
\lb{Th1}
Let $H_n(u)$ be as in Definition  \ref{defi:HamiltonianII}. Then in the sense of $\Gamma$-limits with respect to the $L^1$ topology on $\mathcal{A}_n^{F_\lambda}$
\begin{align*}
n H_n  \rightarrow E_{surf},
\end{align*}
where
\begin{align*}
E_{surf}(u_0):= \left\{
\begin{array}{ll}
\int\limits_{J(\nabla u_0)} \bar{C}(\nabla u_0(x-,0), \nabla u_0(x+,0)) d\mathcal{H}^1, \mbox{ if } u_0 \mbox{ is a piecewise} \\
\quad \quad \mbox{ affine deformation satisfying the boundary condition } \\ 
\quad \quad \mbox{ in (8) for all $n\in \N$ and with gradient } \nabla u_0 \in BV\\
\quad \quad \mbox{ such that } \nabla u_0 \in SO(2)U_0 \cup SO(2)U_1 \mbox{ a.e. in }\Omega, \\[3ex]
\infty, \mbox{ else}.
\end{array}  \right.
\end{align*}
Here $J_{\nabla u_0}$ denotes the jump set of $\nabla u_0$ and $\bar{C}(\cdot, \cdot)$ is the limiting energy density defined in Definition \ref{defi:limiting energy} below.
\end{theorem}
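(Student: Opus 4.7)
The plan is to establish the $\Gamma$-convergence in three standard steps --- compactness, the $\liminf$ inequality, and the construction of a recovery sequence --- transposing the Conti--Schweizer framework from \cite{CS06} to the present lattice setting. The two structural tools made available by our Hamiltonian are essential: the pointwise control of $\dist(\nabla u, K)$ provided by Lemma \ref{lem:lower}, and the one-sided comparison with the $2$D spin system from Appendix \ref{sec:spin}, which is licensed by the lower bound (\ref{eq:lower_a}).

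For compactness and the $\liminf$ inequality, I would start from a sequence $u_n \in \mathcal{A}_n^{F_\lambda}$ satisfying $n H_n(u_n) \leq C$. By (\ref{e:lb}) this bounds $\int_{\Omega} \dist^2(\nabla u_n, K)\, dx$ by $C/n \to 0$, so up to subsequences $u_n \to u_0$ in $L^1$ with $\nabla u_0 \in K$ a.e. The rigidity result (Proposition \ref{prop:rig2}) promotes this to $\nabla u_0 \in BV(\Omega; K)$, and the boundary conditions (\ref{eq:bc}) force $u_0$ to be a piecewise affine simple laminate with jump normals of the types permitted by (\ref{eq:ROC}). To extract the lower bound, I would localize around each interface $x_0 \in J(\nabla u_0)$ via a blow-up argument, use the spin-system comparison to convert the energy in a thin lattice strip around the interface into a one-dimensional optimal-transition cost, and identify the latter with $\bar{C}(\nabla u_0(x-,0), \nabla u_0(x+,0))$ as given by Definition \ref{defi:limiting energy}. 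A standard $\Gamma$-convergence localization (Besicovitch covering applied to the measures $n H_n(u_n, \cdot)$) then yields $\liminf_n n H_n(u_n) \geq \int_{J(\nabla u_0)} \bar{C}\, d\mathcal{H}^1$.

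For the recovery sequence I would proceed constructively: given a piecewise affine $u_0$ of the form appearing in the statement, define $u_n$ to agree with $u_0$ on all lattice points at distance $\gg \lambda_n$ from $J(\nabla u_0)$ (thus preserving the hard boundary data in (\ref{eq:bc})), while on narrow strips of width a few $\lambda_n$ around each flat piece of $J(\nabla u_0)$ insert the near-optimal discrete transition profile that enters the definition of $\bar{C}$. Since only the two rank-one connection directions from (\ref{eq:ROC}) appear, the profiles align with the diagonals of the lattice and can be glued without producing extra energy at triple junctions or at the intersections with $\partial\Omega$. The non-interpenetration condition (\ref{eq:non}) is automatic because the profiles are small perturbations of affine maps with strictly positive Jacobian. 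Summing the interfacial contributions reproduces $E_{surf}(u_0)$ in the limit.

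The main obstacle, as in the continuum Conti--Schweizer analysis but more delicate here, will be the $\liminf$ inequality, specifically the identification of the limit density with exactly $\bar{C}$ rather than some strictly smaller relaxation. Unlike in the continuum case, the discrete partial derivatives $\partial_1 u, \partial_2 u$ are constrained to the coordinate directions rather than aligned with the interface normal, so one needs the spin-system comparison guaranteed by (\ref{eq:lower_a}) to detect and count each macroscopic interface exactly once with the geometrically correct orientation from (\ref{eq:ROC}). Coupled with this is the need to control the contribution of the boundary strips $\{i+j = \pm 4n\}$, which, as highlighted in Remark \ref{rmk:boundary_cond}, the Hamiltonian sees, and which give rise to genuine boundary layer energy that must be shown to be of the correct order to fit into the same surface scaling.
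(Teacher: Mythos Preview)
Your overall structure (compactness/liminf/limsup) is correct, and the compactness step is essentially what the paper does. However, you have inverted the relative difficulty of the two inequalities and, as a consequence, your recovery-sequence construction has a genuine gap.

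\textbf{The $\liminf$ inequality is direct, not via blow-up.} In the paper the densities $B_{\pm}$, $C_{\pm}$ are \emph{defined} as infima of $\liminf$'s over sequences converging in $L^1$ to fixed two-phase profiles (Definition~\ref{defi:boundarylayers}). Once compactness tells you the limit $u_0$ is a simple laminate with finitely many interfaces, you simply cover $J(\nabla u_0)$ by disjoint parallelograms, each containing one interface, observe that the restricted sequence $u_n$ is an admissible competitor in the minimization defining the corresponding layer energy, and invoke Lemma~\ref{lem:indep} (independence of $C_{\pm}$ on the width $m_1$) to make the domains match. No blow-up, no Besicovitch covering, and no spin-system comparison is used at this stage; the spin comparison is needed only for compactness.

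\textbf{The real obstacle is the $\limsup$, and your construction does not close.} You propose to ``insert the near-optimal discrete transition profile that enters the definition of $\bar{C}$'' in strips around each interface and then glue. The issue is that the sequences realizing $C_{\pm}(V_1,V_2)$ in (\ref{eq:layer_energy1}) are only required to converge in $L^1$ to the limiting profile; they carry no information about their boundary values, and there is no reason they are small perturbations of an affine map near the edges of the strip (so neither gluing nor the non-interpenetration condition come for free). The paper addresses this with the \emph{vertical cutting} mechanism (Proposition~\ref{prop:reduc}), which modifies a near-optimal sequence so that it becomes exactly affine outside a smaller strip, at controlled energy cost. This in turn rests on the two-well rigidity estimate (Proposition~\ref{prop:twell}) applied to a one-well density built from the Hamiltonian (Step~2 of the $\limsup$ proof). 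A further, separate difficulty you do not mention is that the minimization defining $\bar{C}$ only yields near-optimal profiles along a \emph{subsequence} $n_j$; the paper uses a scaling/translation argument (Step~3) to manufacture a recovery sequence for \emph{every} $n$. Without the cutting proposition and the subsequence-to-full-sequence step, your $\limsup$ argument does not go through.
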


Let us comment on the result of Theorem \ref{Th1} and its relation to \cite{Lo09}: Similarly as in \cite{Lo09} our analysis is motivated by understanding the relation between the discrete and continuous regularizations of the square-to-rectangular phase transition. In this context we are in particular interested in studying the origins of surface energies. The article \cite{Lo09} compares the scaling of infimizers of a functional of the type (\ref{eq:energy1}) with a discretization of (\ref{eq:energy_c}) and shows that the corresponding scaling behaviors coincide. In particular, (for infimizers) this allows to switch between the discrete and continuum functional (up to giving up constants) and to transfer bounds from the discrete to the analogous continuum model. In principle, this would permit us to establish compactness properties for sequences for the discrete (minimizing) sequences from the compactness properties of their continuous analoga. However, due to the loss of the constants, \cite{Lo09} does \emph{not} imply our $\Gamma$-convergence result. Instead of exploiting the result of \cite{Lo09}, we give an independent proof of the compactness properties, since in our discrete setting this step is simplified by a comparison with a spin system (due to the presence of next-to-nearest neighbor interactions). Having established compactness, our proof then follows the ideas outlined by Conti and Schweizer \cite{CS06}, \cite{CS06a}, \cite{CS06c} adapted to our discrete set-up.\\
Let us further note that while Theorem \ref{Th1} does not explain the different diffuse and sharp interface features observed in experiments, it does show that as in the one-dimensional case and as in \cite{Lo06}, \cite{Lo09}, discrete two-well energies such as in Definitions \ref{defi:HamiltonianII} and \ref{defi:Hamiltonian2} naturally lead to higher order regularizations. 
If understood on the level of finite, but large sample sizes they might even give indications for the experimentally observed behavior. The analogy between the discrete and the continuous setting is highlighted in Table \ref{tab:1}. It provides a natural correspondence between continuum objects considered by Conti and Schweizer \cite{CS06}, \cite{CS06a}, \cite{CS06c} and our discrete setting introduced in the previous paragraph.

\begin{table}[t]
\centering
\begin{tabular}[c]{|c|c|c|c|}
 \hline\hline 
&   continuum, macroscopic &  discrete, microscopic  \\
\hline \hline
energies  &   \ $\int\limits_{\Omega} \frac{1}{\epsilon} W(\nabla u) + \epsilon |\nabla^2 u|^2 dx \leq C,  $ &  \ $\sum\limits_{i=-n}^n\sum\limits_{j=-n}^n \frac{1}{n^2} h_{u_n}^{ij}\leq  \frac{C}{n} $,  \\
&   \ $\epsilon  \rightarrow 0 $&  \ $n  \rightarrow \infty$ \\
 \hline
 &   &  \\
regularity of  &   &  \ $u \in C^{0,1}(\Omega, \mR^2)$ is piecewise    \\
the deformation &   \ $u \in W^{2,2}(\Omega, \mR^2)$  & \  affine on the underlying  \\
&   &  \ lattice (after interpolation)  \\
\hline
& & \\
surface energies &   \ $\int\limits_{\Omega}  \epsilon |\nabla^2 u|^2 dx \leq C$ &   \ $\sum\limits_{i=-n}^n\sum\limits_{j=-n}^n \frac{1}{n^2}|\nabla^2_n u|^2 \leq C n$  \\
\hline
 & &\\
scaling  &   $ \ \epsilon$ &  $ \ \frac{1}{n}$ \\
parameter &  &   \\
 \hline\hline 
\end{tabular}
\caption{Comparison of the relevant continuous and discrete quantities.}
\label{tab:1}
\end{table}

\subsection{Organization of the article}
The remainder of the article is organized as follows. In Section  \ref{sec:rigidity} we derive compactness (Proposition \ref{prop:rig0}) and rigidity properties (Propositions \ref{prop:rig2}, \ref{prop:twell}) of the sequences of deformations $u_n$ obeying the surface energy scaling (\ref{eq:surfscaling}).
In Section \ref{surface} we describe the limiting surface energies and prove Theorem \ref{Th1}. Important auxiliary results are proved in the Appendices: In Appendix \ref{sec:spin} we provide a mapping of our discrete two-well problem to a spin system. This yields a one-sided estimate of the original Hamiltonian from below.
These results are crucially used in our proof of the compactness results of Propositions \ref{prop:rig0}, \ref{prop:rig2}. 
In Appendix \ref{sec:2derivative} we show that our discrete Hamiltonian $H_n$ provides upper bounds of the discrete second derivatives of admissible deformations $u$. 
While the latter bounds are not actually necessary for our argument, they are included as an illustration of the comparability of our discrete and the continuous model from \cite{CS06}. Last but not least in Appendix \ref{app:Coarea} and Appendix \ref{app:alg} we give sketches of the proofs of the discrete coarea formula and the well-definedness of the algorithm yielding the perturbed grid in the proof of Proposition \ref{prop:reduc}, Step 4a in Section \ref{surface}.

\section{Rigidity}
\label{sec:rigidity}
In this section we prove various rigidity estimates. On the one hand they yield compactness properties (c.f. Propositions \ref{prop:rig0}, \ref{prop:rig2}) for admissible sequences $\{u_n\}_{n\in \N}$ which obey the energy bound (\ref{eq:surfscaling}). On the other hand, adapting to our discrete setting the ideas of Conti and Schweizer \cite{CS06}, \cite{CS06a}, \cite{CS06c}, we show finer rigidity estimates for sequences, which, in addition to (\ref{eq:surfscaling}), also satisfy the smallness condition \rf{eq:majority_phase} for a local \emph{one-well} energy (see Proposition \ref{prop:twell} below). The latter estimates play a crucial role for the cutting mechanism which is used for the construction of the
recovery sequence in the $\Gamma$-$\limsup$ inequality (c.f. Section \ref{sec:Gamma}).

\subsection{Compactness}
\lb{sec:compact}
In this section we exploit the information from Appendix \ref{sec:spin}, in order to prove rigidity of the limiting deformation fields (c.f Propositions \ref{prop:rig0}, \ref{prop:rig2}). 
We begin with the following auxiliary result:

\begin{lem}
\label{lem:L2_convI}
Let $\{u_n\}_{n\in \N} \subset \mathcal{A}_n^{F_\lambda}$ be a sequence of piecewise affine functions on the grid $\Omega_n$ satisfying (\ref{eq:surfscaling}). Then there exist (up to zero sets) disjoint Caccioppoli sets $\Omega_0$ and $\Omega_1$  such that
\bea
\label{domconv}
&&\Omega_0\cup\Omega_1=\Omega,\\ \nonumber
&&\dist(\nabla u_n , SO(2)U_0) \rightarrow 0 \mbox{ in } L^2(\Omega_0),\\  \nonumber
&&\dist(\nabla u_n , SO(2)U_1) \rightarrow 0 \mbox{ in } L^2(\Omega_1).
\eea
\end{lem}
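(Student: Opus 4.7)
The plan is to combine Lemma \ref{lem:lower} with the one-sided spin-system comparison of Appendix \ref{sec:spin} in order to identify $\Omega_0$ and $\Omega_1$ as the two level sets of a BV limit of piecewise constant phase indicators, and then to upgrade convergence in measure to convergence in $L^2$ on each piece.

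As a preliminary observation, Lemma \ref{lem:lower} together with the energy bound (\ref{eq:surfscaling}) yields
\[
\int_\Omega \dist^2(\nabla u_n, K)\, dx \;\leq\; c\, H_n(u_n) \;\leq\; \frac{C}{n},
\]
so $\dist(\nabla u_n, K) \to 0$ in $L^2(\Omega)$, and in particular in measure. To select the correct well pointwise, I introduce the phase indicator $\chi_n\colon \Omega \to \{0,1\}$, piecewise constant on the grid triangles, by setting $\chi_n=0$ wherever $\dist(\nabla u_n, SO(2)U_0)\leq \dist(\nabla u_n, SO(2)U_1)$ and $\chi_n=1$ otherwise.

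The crucial step is a uniform BV bound $|D\chi_n|(\Omega) \leq C$. Heuristically, an interfacial edge (between two triangles carrying different values of $\chi_n$) forces $\nabla u_n$ to sit at distance at least $\bar c/2$ from the selected well on at least one of the adjacent triangles, which by \rf{e:lb} contributes at least $c\,\bar c^2/n^2$ to $H_n(u_n)$; thus the number of interfacial edges is at most $C n^2 H_n(u_n) \leq C n$, and since each edge has length $1/n$, one gets $|D\chi_n|(\Omega) \leq C$. The spin-system comparison of Appendix \ref{sec:spin} makes this rigorous by providing a direct lower bound of $H_n$ in terms of the perimeter of the level sets of $\chi_n$. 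By BV compactness and an a.e.\ extraction, a subsequence of $\chi_n$ converges in $L^1(\Omega)$ and almost everywhere to some $\chi\in BV(\Omega;\{0,1\})$; setting $\Omega_0 := \{\chi=0\}$ and $\Omega_1:=\{\chi=1\}$ then produces the desired (up to null sets) disjoint Caccioppoli partition of $\Omega$.

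For the $L^2$ convergence on $\Omega_0$, write $A_n := \Omega_0 \triangle \{\chi_n=0\}$; the $L^1$ convergence of $\chi_n$ gives $|A_n|\to 0$. On $\Omega_0 \setminus A_n$ one has $\chi_n=0$, so by definition $\dist(\nabla u_n, SO(2)U_0)=\dist(\nabla u_n, K)$ there, whose $L^2$ norm already tends to zero. On $A_n$ one uses the uniform estimate $\dist(\nabla u_n, SO(2)U_0) \leq \dist(\nabla u_n, K) + \bar c$ together with $|A_n|\to 0$; combining gives
\[
\int_{\Omega_0} \dist^2(\nabla u_n, SO(2)U_0)\, dx \;\leq\; 2\int_\Omega \dist^2(\nabla u_n, K)\, dx + 2\bar c^{\,2} |A_n| \;\longrightarrow\; 0.
\]
The argument for $\Omega_1$ is identical up to swapping the two wells.

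The main obstacle is the perimeter bound on $\chi_n$: the bare energy control yields only $L^2$ decay of $\dist(\nabla u_n, K)$, which is insufficient to separate the two phases. The one-sided spin-system lower bound from Appendix \ref{sec:spin} is precisely the ingredient that promotes the scaling $H_n \lesssim 1/n$ into a BV estimate on the phase indicator, and it is the essential microscopic-to-mesoscopic step of the argument; once it is in place, both the Caccioppoli structure and the $L^2$ convergence on each piece follow by standard measure-theoretic manipulations.
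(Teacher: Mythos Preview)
Your proposal is correct and follows the same route as the paper: compare with a spin system to get a uniform perimeter bound on a phase indicator, extract a limiting Caccioppoli partition by BV compactness, and then upgrade to $L^2$ convergence on each piece by splitting off the symmetric difference. Two small points are worth noting. First, the indicator used in Appendix~\ref{sec:spin} is not your $\chi_n$ but $\sigma_n$, defined via the stronger condition $h^{i,j}_{u_n}\le \bar c/10$ together with $\dist(\nabla u_n^{i,j},SO(2)U_0)\le \bar c/10$; your appeal to Appendix~\ref{sec:spin} therefore implicitly switches indicators, which is harmless since the $L^2$ step only needs the limiting sets and the symmetric-difference control. Second, your heuristic for the perimeter bound is not quite the right mechanism: across an interfacial edge both gradients can sit exactly on their respective wells, so \eqref{e:lb} alone gives nothing; the actual lower bound (Lemma~\ref{lem:lowerbound}) comes from the five-point structure of $h^{i,j}_u$, which couples $\partial_1 u^{i,j}$ with $\partial_1 u^{i-1,j}$ and forces a finite energy cost at any transition. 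With these two adjustments your argument coincides with the paper's.
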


\begin{proof}
The proof is based on a comparison argument with a spin-Hamiltonian and is given in Propositions \ref{prop:conver} and \ref{lem:L2_conv} in Appendix A.
\end{proof}
In general the $L^2$ convergence shown in \rf{domconv} can be arbitrarily slow, c.f. Remark \ref{rmk:set1} in Appendix A.

\begin{prop}
\label{prop:rig0}
Let $\{u_n\}_{n\in\N}\subset \mathcal{A}_n^{F_\lambda}$ be a sequence of piecewise affine functions on the grid $\Omega_n$ satisfying the energy bound (\ref{eq:surfscaling}).
Then there exist a subsequence $\{n_j\}_{j\in\N} \subset \N$ and a limiting deformation $ u\in W^{1,\infty}(\Omega, \mR^2)$ such that
\begin{itemize}
\item[(a)] $\nabla u_{n_j}  \rightarrow \nabla u \mbox{ in } L^{2}(\Omega_0)$,
\item[(b)] $\nabla u \in SO(2)U_0 \mbox{ in } \Omega_{0}.$
\end{itemize}
An analogous statement holds in $\Omega_1$.
\end{prop}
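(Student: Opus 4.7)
The plan is a standard two-step compactness argument: (i) extract a $W^{1,2}$-weak limit from the surface energy bound \rf{eq:surfscaling}, and (ii) locally upgrade to strong $L^2$ convergence via the Friesecke--James--M\"uller (FJM) geometric rigidity theorem, exploiting the partition of $\Omega$ provided by Lemma \ref{lem:L2_convI}.

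For (i), the truncation built into Definition \ref{defi:HamiltonianII} (through the function $k$ with $L^2$ growth) together with Lemma \ref{lem:lower} yields
\begin{align*}
\int\limits_{\Omega} \dist^2(\nabla u_n, K)\, dx \, + \int\limits_{\{|\nabla u_n| \geq R\}} |\nabla u_n|^2\, dx \, \leq \, C/n
\end{align*}
for a suitable $R = R(a,b)$, and hence a uniform $L^2(\Omega)$ bound on $\nabla u_n$. Since $u_n$ coincides with the affine map $F_\lambda \cdot$ on the left edge $\{x_1 + x_2 = -4\} \cap \Omega$, which has positive $\mathcal{H}^1$ measure, the Poincar\'e inequality with Dirichlet trace on that edge upgrades this to a uniform $W^{1,2}(\Omega, \mR^2)$ bound for $\{u_n\}$. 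Passing to a subsequence, $u_{n_j} \to u$ in $L^2(\Omega)$ by Rellich, and $\nabla u_{n_j} \rightharpoonup \nabla u$ weakly in $L^2(\Omega, \mR^{2\times 2})$.

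For (ii), Lemma \ref{lem:L2_convI} gives $\|\dist(\nabla u_n, SO(2)U_0)\|_{L^2(\Omega_0)} \to 0$. Since $\Omega_0$ is a Caccioppoli set, I would exhaust it (modulo a null set) by an increasing sequence $\omega_1 \subset \omega_2 \subset \dots \subset \Omega_0$ of open bounded Lipschitz subdomains. On each $\omega_k$, the FJM rigidity theorem applied to $u_n U_0^{-1}$ provides constant rotations $R_n^{(k)} \in SO(2)$ with
\begin{align*}
\|\nabla u_n - R_n^{(k)} U_0\|_{L^2(\omega_k)} \leq C(\omega_k) \|\dist(\nabla u_n, SO(2)U_0)\|_{L^2(\omega_k)} \to 0.
\end{align*}
Compactness of $SO(2)$ together with a diagonal extraction yields $R_n^{(k)} \to R^{(k)} \in SO(2)$ for every $k$; hence $\nabla u_{n_j} \to R^{(k)} U_0$ strongly in $L^2(\omega_k)$, and matching with the weak limit forces $\nabla u = R^{(k)} U_0 \in SO(2)U_0$ almost everywhere on $\omega_k$. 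This already gives claim (b), namely $\nabla u \in SO(2)U_0$ a.e. in $\Omega_0$. Claim (a) follows by propagating the local strong convergence to all of $\Omega_0$: for fixed $k$ the contribution on $\omega_k$ vanishes as $j \to \infty$, while the contribution on $\Omega_0 \setminus \omega_k$ is controlled by the uniform integrability of $\{|\nabla u_{n_j}|^2\}$, a consequence of the truncation-induced bound $\int_{\{|\nabla u_n| \geq R\}} |\nabla u_n|^2 \leq C/n$. Finally, since $SO(2)U_0 \cup SO(2)U_1$ is a bounded subset of $\mR^{2\times 2}$, $\nabla u \in L^\infty$ and so $u \in W^{1,\infty}(\Omega, \mR^2)$. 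The analogous statement on $\Omega_1$ follows by the symmetric argument with $U_1$ in place of $U_0$.

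The principal technical obstacle is the local-to-global step: the FJM constant $C(\omega_k)$ depends on the geometry of $\omega_k$ and need not stay bounded as $\omega_k \uparrow \Omega_0$, which prevents a direct application of FJM on $\Omega_0$ itself. The Caccioppoli structure of $\Omega_0$ is precisely what guarantees the existence of a Lipschitz exhaustion on which rigidity can be applied with finite (though non-uniform) constants, after which uniform integrability, rather than a single global rigidity estimate, takes over to deliver strong convergence on all of $\Omega_0$.
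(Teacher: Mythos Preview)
Your argument is correct but follows a genuinely different route from the paper. The paper does \emph{not} invoke FJM rigidity here; instead it uses Kinderlehrer's polyconvexity trick: after the same truncation to a sequence $v_n$ with $\|\nabla v_n\|_{L^\infty}\le c_1$, one considers the null Lagrangian--corrected function $f(F):=|FU_0^{-1}|^2-2\det(FU_0^{-1})$, which is nonnegative and vanishes precisely on $\mR_{\ge 0}\cdot SO(2)U_0$. Lemma \ref{lem:L2_convI} forces $\int_{\Omega_0}f(\nabla v_n)\to 0$; weak lower semicontinuity of the norm together with weak continuity of the determinant (legitimate on the truncated sequence) then gives $\int_{\Omega_0}f(\nabla u)=0$ and $\|\nabla v_n\|_{L^2(\Omega_0)}\to\|\nabla u\|_{L^2(\Omega_0)}$, hence strong convergence, with $\nabla u\in SO(2)U_0$ following from $f(\nabla u)=0$ and the closeness to the well.

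The trade-off: your FJM-plus-exhaustion argument is more hands-on and, as you note, yields as a byproduct that $\nabla u$ is constant on each connected Lipschitz piece of $\Omega_0$; but it requires you to manage the geometry of the Caccioppoli set $\Omega_0$ (the exhaustion by Lipschitz subdomains and the non-uniform rigidity constants), and then a separate uniform-integrability step to pass from local to global strong convergence. The paper's approach sidesteps all of this: it works directly on $\Omega_0$ with no regularity assumption on its boundary, because the polyconvex structure of $f$ makes weak lower semicontinuity automatic. Both proofs rely on the same truncation, just for different reasons (weak continuity of $\det$ versus uniform integrability of $|\nabla u_n|^2$).
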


In order to show this, we follow an argument of Kinderlehrer, c.f. also \cite{Mue} Theorem 2.4, in which the function $\dist(\cdot, SO(2)U_0)$ is replaced by a lower semi-continuous analogue. 

\begin{proof}
\emph{Step 1: Set-up.}

We set $f(F):= |F U_0^{-1}|^2 - 2\det(F U_0^{-1})$ and remark that $f(F)\geq  0$ and $ f(F)=0 \Leftrightarrow F = \lambda Q U_{0}$ with $\lambda \geq 0$ and $Q\in SO(2)$. 
Moreover, we recall the truncation argument from \cite{FJM05}, which allows to replace $u_n$ by a sequence $v_n\in W^{1,\infty}(\Omega)$ with the following properties:
\begin{align*}
\| \nabla v_n \|_{L^{\infty}(\Omega)} &\leq c_1,\\
\| \nabla u_n - \nabla v_n \|_{L^2(\Omega)} &\leq \int\limits_{\{|\nabla u_n| \geq c_2\}}|\nabla u_n|^2 dx \leq \frac{c}{n}.
\end{align*}
Here $c_1, c_2 > 0$ are universal constants (independent of $n$). The last estimate follows from the energy bounds  (\ref{eq:surfscaling}) and \rf{e:lb}. As a consequence of the comparability of $\nabla u_n$ and $\nabla v_n$, the two functions have the same weak limit $\nabla u$. Therefore, it suffices to prove the strong convergence of $\nabla v_n$ to $\nabla u$.
\\

\emph{Step 2: Convergence of $f(\nabla v_n)$.}
We claim that
\begin{equation}
\label{eq:measconv}
\lim\limits_{n  \rightarrow \infty} \int\limits_{\Omega_0} f(\nabla v_n) dx = 0.
\end{equation}
Indeed, this is a direct consequence of the second estimate in \rf{domconv} in Proposition \ref{lem:L2_convI}: As $L^2$ convergence implies convergence in measure, for any $\epsilon, \delta >0$ given, there exists $N_0= N_0(\epsilon, \delta)$ such that for $\Omega_{\delta}:=\{x\in \Omega_0| \dist(\nabla v_n, SO(2)U_0)< \delta \}$ it holds
\begin{align*}
|\Omega_{\delta}| \geq (1-\epsilon)| \Omega_0| \mbox{ for all } n \geq N_0.
\end{align*}
Thus,
\begin{align*} 
\int\limits_{\Omega_0} f(\nabla v_n) dx & = \int\limits_{\Omega_0   \cap\Omega_{\delta}^c}f(\nabla v_n) dx  + \int\limits_{\Omega_0   \cap \Omega_{\delta}} f(\nabla v_n) dx\\
& \leq  |\Omega_0  \cap \Omega_\delta^{c}|f(c_1) +  C|\Omega|\delta^2 \leq C|\Omega|(\epsilon +  \delta^2).
\end{align*}

\emph{Step 3:} Using the convergence from Step 2, leads to
\begin{equation}
\label{eq:weakFJM}
\begin{split}
0 & = \liminf\limits_{n \rightarrow \infty} \int\limits_{\Omega_{0}}f(\nabla v_n)dx\\
&= \liminf\limits_{n \rightarrow \infty} \left(\int\limits_{\Omega_{0}}|\nabla v_n U_{0}^{-1}|^2dx -2 \int\limits_{\Omega_{0}}\det(\nabla v_n U_{0}^{-1})dx  \right)\\
& \geq \int\limits_{\Omega_{0}}|\nabla u U_{0}^{-1}|^2dx - 2\int\limits_{\Omega_{0}}\det(\nabla u U_{0}^{-1})dx\\
&= \int\limits_{\Omega_{0}}f(\nabla u)dx \geq 0.
\end{split}
\end{equation}
Here the first equality follows from Step 2 above. The fourth estimate is a consequence of the lower semi-continuity of the norm together with the weak continuity of the determinant and the fact that $\nabla v_n  \rightharpoonup \nabla u$ in $L^2(\Omega)$ as $n \rightarrow \infty$. 
Hence, we have equalities everywhere in (\ref{eq:weakFJM}) and therefore,
\begin{align*}
f(\nabla u)=0 \mbox{ and }  \left\| \nabla v_n  \right\|_{L^2(\Omega_{0})}  \rightarrow \left\| \nabla u  \right\|_{L^2(\Omega_{0})} 
\end{align*}
along a subsequence.
As a consequence, along a subsequence $\nabla u_n \rightarrow \nabla u$ in $L^2(\Omega_0)$ and $\nabla u(x) \in SO(2)U_0$ (we remark that $\lambda = 1$ due to the closeness of $\nabla v_n$ to $SO(2)U_0$ which follows from Lemma \ref{lem:L2_conv}).
\end{proof}

With these results at hand, we can now invoke the two-well rigidity result of Dolzmann and Müller \cite{DM95}. This yields a structure result for the sets $\Omega_0$, $\Omega_1$. More precisely, the associated liming deformation, $u$, has to be a simple laminate, which (by virtue of the energy bound (\ref{eq:surfscaling})) implies that $\Omega_{0,1}$ consists of a union of finitely many stripes (and triangles), c.f. Figure \ref{fig:setup1}.

\begin{figure}
\centering
\includegraphics[width=0.75\textwidth]{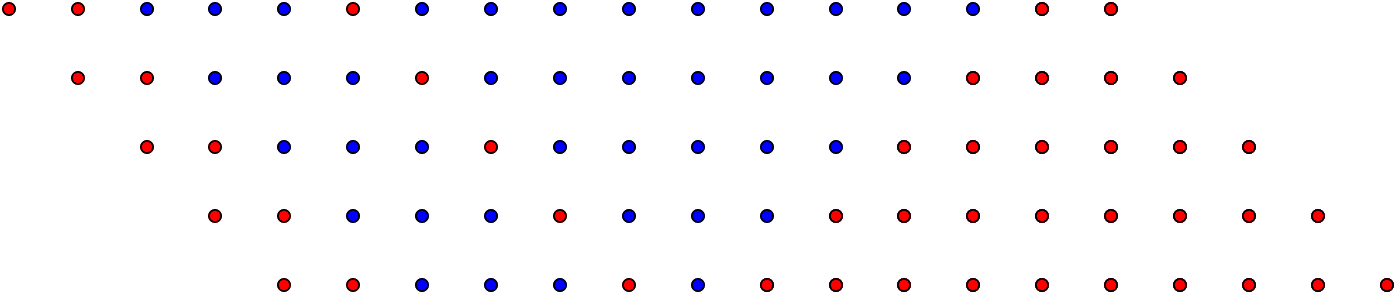}
\caption{A simple laminate depicted in the reference configuration. The figure shows the reference configuration and the domains, in which $ \nabla u \in SO(2)U_0$ (red) and $\nabla u\in SO(2)U_1$ (blue). In particular, the configuration need only locally be a simple laminate. It is also possible that there are intersections of the twinning planes at the boundary.}
\label{fig:setup1}
\end{figure}

\begin{prop}[Rigidity]
\label{prop:rig2}
Let $\{u_n\}_{n\in \N} \subset \mathcal{A}_n^{F_\lambda}$ be a sequence of piecewise affine functions on the grid $\Omega_n$ satisfying the energy bound (\ref{eq:surfscaling}).
Then there exist a subsequence $\{n_j\}_{j\in\N} \subset \N$ and a limiting deformation $u\in W^{1,\infty}(\Omega, \mR^2)$ such that 
\begin{itemize}
\item[(a)] $\nabla u_{n_j}  \rightarrow \nabla u$ in $L^2(\Omega)$, where $\nabla u \in K$ a.e. is a piecewise constant BV function,
\item[(b)] the associated domains $\Omega_0$ and $\Omega_1$ which were defined in Lemma 2.1 consist of a union of finitely many polygonal domains which extend up to the boundary of $\Omega$. The interfaces of the polygonal domains are either given by lines with the normals $(-1,1)$ or $ (1,1)$ (which are determined by the rank-one connections from \rf{eq:ROC}) or by the boundary of $\Omega$.
\item[(c)] there exists a constant $c\in \mR^2$ with
\begin{align*}
u(x_1,x_2) &= F_{\lambda} (x_1,x_2), \ x_{1}+x_2 \leq -4,\\
u(x_1,x_2) &= F_{\lambda} (x_1,x_2) + c, \ x_{1}+x_2 \geq 4,
\end{align*}
for all $|x_2 | \leq 2$.
\end{itemize}
\end{prop}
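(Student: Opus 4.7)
First I would combine Lemma~\ref{lem:L2_convI} with Proposition~\ref{prop:rig0}: the former gives a Caccioppoli partition $\Omega=\Omega_0\cup\Omega_1$, while the latter applied on each subdomain produces (after a common subsequence) a limit $u\in W^{1,\infty}(\Omega,\mR^2)$ with $\nabla u_{n_j}\to\nabla u$ in $L^2(\Omega)$ and $\nabla u\in SO(2)U_i$ a.e.\ on $\Omega_i$. This already yields the convergence assertion of (a) and the location of $\nabla u$ in the wells.

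The key rigidity step is to invoke the two-well theorem of Dolzmann and M\"uller~\cite{DM95} (see also Theorem~2.4 in \cite{Mue}): since $u\in W^{1,\infty}$ with $\nabla u\in SO(2)U_0\cup SO(2)U_1$ almost everywhere, $u$ is locally a simple laminate. In particular, on each connected component of $\Omega_i$ the rotation is constant, so $\nabla u$ is piecewise constant; moreover each interface in $J_{\nabla u}$ separates two rank-one connected matrices, and by \rf{eq:ROC} its normal must be $(1,1)/\sqrt{2}$ or $(1,-1)/\sqrt{2}$. Together with the BV bound $|D\nabla u|(\Omega)<\infty$, which I would extract by passing to the limit in the spin-system lower bound from Appendix~\ref{sec:spin} applied to $nH_n(u_n)\le C$, this establishes the piecewise constant BV part of (a) and the interface normals in (b).

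For the boundary identifications in (c), the admissibility condition forces the piecewise affine interpolant to satisfy $\nabla u_n\equiv F_\lambda$ in the shrinking lattice layers next to the two edges of $\Omega$, with translations $c_n$ on the right side uniformly bounded by the global Lipschitz estimate on $u_n$. Passing to the limit along a further subsequence gives $u(x)=F_\lambda x$ on $\{x_1+x_2\le -4\}\cap\overline{\Omega}$ and $u(x)=F_\lambda x+c$ on $\{x_1+x_2\ge 4\}\cap\overline{\Omega}$ for some $c\in\mR^2$. It remains to prove that the polygonal decomposition in (b) is finite: I would combine the two admissible interface normals, the BV bound, and the affine boundary data, arguing that each interface segment must terminate either on $\partial\Omega$ or on another interface segment, and that the bounded geometry of $\Omega$ together with the two fixed slopes excludes accumulations.

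The main obstacle I anticipate is this finiteness step --- a BV bound on $\nabla u$ a priori allows infinitely many interface segments of small total length, and upgrading this to a genuinely finite polygonal partition requires exploiting the very restrictive structure coming from the local Dolzmann--M\"uller rigidity (only two admissible slopes, constant rotation within each region) together with a topological/geometric argument in the bounded convex region $\Omega$ and the affine boundary traces. The remaining ingredients (full $L^2$ convergence, invoking two-well rigidity, and the boundary-trace argument) are comparatively straightforward given the compactness machinery already assembled in Proposition~\ref{prop:rig0} and Lemma~\ref{lem:L2_convI}.
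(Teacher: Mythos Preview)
Your plan is essentially the paper's proof: Proposition~\ref{prop:rig0} for the $L^2$ convergence and the inclusion $\nabla u\in SO(2)U_i$ on $\Omega_i$, then the Dolzmann--M\"uller structure theorem \cite{DM95} for the laminate structure and the interface normals, and finally the boundary data and the translation $c$ read off from the $H^1$ convergence together with the admissibility constraint. So the overall strategy is correct and matches the paper.

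Where you diverge is in your assessment of the finiteness step. You describe it as the ``main obstacle'' and anticipate a delicate topological argument; the paper disposes of it in a single sentence: since $\Per(\Omega_1)\le C<\infty$ (which you correctly locate in the spin comparison of Appendix~\ref{sec:spin}), there can only be finitely many phase transitions. The point you may be underweighting is that Dolzmann--M\"uller does not merely give a BV gradient with rectifiable jump set --- it forces each interface to be a \emph{straight line segment} with one of the two fixed normals, extending all the way to $\partial\Omega$ (interfaces can only meet at the boundary). In the specific parallelogram $\Omega=\Omega_{4,1}^+$, every interface with normal $(1,1)$ is parallel to the slanted sides and therefore runs from the top edge to the bottom edge with length $2\sqrt{2}$; the perimeter bound then immediately caps their number. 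The residual interfaces with normal $(1,-1)$ live in the finitely many strips just produced and are handled by the same reasoning. So the finiteness is a direct consequence of the structural rigidity in \cite{DM95} plus the uniform perimeter bound, not a separate hurdle.
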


\begin{proof}
By Proposition  \ref{prop:rig0} there exists a subsequence $n_j\in \N$ such that $\nabla u_{n_j}  \rightarrow \nabla u$ and $\nabla u \in SO(2)U_0$ (and $SO(2)U_1$) in $\Omega_0$ (and $\Omega_1$, respectively). As $\Omega_0$ and $\Omega_1$ have finite perimeter, $\nabla u$ satisfies the conditions of Theorem 5.3 in \cite{DM95} and the limiting deformation $\nabla u$ locally is a laminate. As $\Per(\Omega_1) \leq  C < \infty$, there can only be a finite number of phase transitions between $SO(2)U_0$ and $SO(2)U_1$. The remaining parts of the statements (a), (b) follow from Theorem 5.3 in \cite{DM95}.\\
The boundary conditions which are stated in (c) are a consequence of the $H^1(\Omega)$ convergence of $u$ and the prescribed boundary data (\ref{eq:bc}). The finiteness of $c\in \mR^2$ follows from the energy bound (\ref{eq:surfscaling}).
\end{proof}

The compactness result of Proposition \ref{prop:rig2} is a first important step for the formulation and derivation of the limiting surface energy (c.f. Definition \ref{defi:limiting energy}, (\ref{eq:total_energy1}) and Theorem \ref{prop:surface}) and proof of Theorem \ref{Th1} in Section \ref{sec:Gamma}.

\subsection{Two-well rigidity}
\label{sec:twell}
In this section we derive an analogue of the two-well rigidity result of Conti and Schweizer \cite{CS06}, \cite{CS06a}, \cite{CS06c} in our discrete framework:

\begin{prop}[Two-well rigidity]
\label{prop:twell}
Let $\alpha\in (0,1/8),\ x_0, y_0\in \Omega$ with\\
$|x_0 - y_0|=r > 0$ and $B_{2 \alpha r}(x_0) \cup B_{2 \alpha r}(y_0)\subset \Omega$.
Define $M:= \conv(B_{2\alpha r}(x_0) \cup B_{2\alpha r}(y_0) )$ and suppose that $u\in \mathcal{A}_n$ is piecewise affine on the grid $\Omega_n$ and satisfies
\begin{align}
\label{eq:majority_phase}
& \sum\limits_{(i,j) \in n M} \frac{1}{n^2} (r^{-2} \phi_{u}^{ij}+ r^{-1}|\nabla_n \phi_u^{ij}|) \leq \eta,
\end{align}
where $\phi \in C^{0,1}((\mR^2)^4, \mR),   \ \phi(\cdot) \geq 0,$
\begin{align*}
&(\p_1 u^{i,j}, \p_1 u^{i-1,j}, \p_{2}u^{i,j}, \p_{2}u^{i,j-1}) \mapsto \phi(\p_1 u^{i,j}, \p_1 u^{i-1,j}, \p_{2}u^{i,j}, \p_{2}u^{i,j-1})=:\phi_{u}^{ij}
\end{align*}
denotes a one-well energy density with energy well $SO(2)U_0$ and two-growth behavior, i.e. there exist positive constants $c_1, c_2>0$ such that
\begin{align}
\label{eq:growth_a}
c_1 \dist^2(\nabla u^{ij}, SO(2)U_0) \leq \phi_u^{ij} \leq c_2 \dist^2 (\nabla u^{ij}, SO(2)U_0).
\end{align}
Let $\theta \in (0,1)$. If $\eta = \eta(\theta,\alpha)>0$ is chosen sufficiently small, then there exist a constant $c=c(\alpha, \theta)$ and a subset $U\subset B_{\alpha r}(x_0)\times B_{\alpha r}(y_0)$ with measure\\ $|U|\geq (1-\theta) |B_{\alpha r}(x_0)\times B_{\alpha r}(y_0)|$ such that for all $(x,y)\in U$
\begin{align}
\label{eq:closew}
1 - c\mu \leq \frac{|u(x)-u(y)|}{|U_0(x-y)|} \leq 1 + c\mu, 
\end{align}
where $\mu:=\frac{1}{r^2}\sum\limits_{(i,j)\in n M}\frac{1}{n^2}\dist(\nabla u^{ij},K).$
\end{prop}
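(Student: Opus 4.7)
The plan is to mirror the three-step Conti--Schweizer argument in our discrete, piecewise affine setting: (i) quantitative rigidity to a single constant matrix; (ii) co-area slicing to pick out good segments between pairs of points; (iii) line integration to convert gradient closeness into pointwise distance comparability.

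Step 1 (quantitative rigidity to a single well). The two-sided growth hypothesis on $\phi$ combined with the first half of \rf{eq:majority_phase} gives $\int_M \dist^2(\nabla u, SO(2)U_0)\, dx \leq c \eta r^2$. Since $M$ is a dumbbell-shaped convex set whose bi-Lipschitz geometry depends only on $\alpha$, applying the Friesecke--James--M\"uller rigidity inequality \cite{FJM05} to the rescaled deformation $x \mapsto u(x) U_0^{-1}$ produces a fixed rotation $R \in SO(2)$ with
\begin{equation*}
\int_M |\nabla u - R U_0|^2\, dx \leq c(\alpha)\, \eta\, r^2.
\end{equation*}
Cauchy--Schwarz and $\dist(\nabla u, K) \leq \dist(\nabla u, SO(2)U_0)$ give the $L^1$ version in terms of the quantity $\mu$ of the statement.

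Step 2 (good slicing via discrete co-area). The second half of \rf{eq:majority_phase} promotes $\phi_u$ to a discrete-$W^{1,1}(M)$ function with norm controlled by $\eta r$. Applying the discrete co-area formula of Appendix \ref{app:Coarea} along the family of segments $L_{x,y}$ parametrised by pairs $(x,y) \in B_{\alpha r}(x_0) \times B_{\alpha r}(y_0)$, and using a Chebyshev-type selection, I extract a subset $U$ of measure at least $(1-\theta)|B_{\alpha r}(x_0) \times B_{\alpha r}(y_0)|$ on which $L_{x,y} \subset M$ and
\begin{equation*}
\int_{L_{x,y}} \dist(\nabla u, SO(2)U_0)\, d\mathcal{H}^1 \leq \frac{c(\alpha)}{\theta}\, \mu r.
\end{equation*}
The parameters $\alpha$ and $\theta$ enter the constant through the aspect ratio of $M$ and the Chebyshev threshold, respectively.

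Step 3 (line integration and triangle inequality). For $(x,y) \in U$ the piecewise affine deformation $u$ is Lipschitz along $L_{x,y}$, so with $\tau = (y-x)/|y-x|$,
\begin{equation*}
u(y) - u(x) = R U_0 (y-x) + \int_{L_{x,y}} (\nabla u - RU_0)\cdot \tau\, d\mathcal{H}^1.
\end{equation*}
The error integral is bounded by $c(\alpha,\theta)\, \mu r$ via Step 2 together with Step 1. Since $|RU_0(y-x)| = |U_0(y-x)| \asymp r$ uniformly for $(x,y)$ in $U$, a triangle inequality yields $\bigl| |u(y) - u(x)| - |U_0(y-x)| \bigr| \leq c\mu r$, which, upon dividing by $|U_0(y-x)|$, is exactly \rf{eq:closew}. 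The smallness of $\eta = \eta(\theta,\alpha)$ is used precisely to absorb the $O(\sqrt{\eta})$ contributions from Step 1 into the right-hand side without degrading the constant.

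The main obstacle will be Step 2. In the continuum one would simply apply Fubini to a $W^{1,1}(M)$ function; here $\phi_u$ is only piecewise constant on lattice triangles, so the discrete gradient $\nabla_n \phi_u^{ij}$ must be shown to control the oscillation of $\phi_u$ along a generic line segment in an averaged sense. Moreover, the set $U$ must be selected as a two-parameter good set (in both endpoints simultaneously), which requires iterating the one-dimensional slicing argument. This is the content of the discrete coarea formula of Appendix \ref{app:Coarea}, and its careful deployment, with constants tracked in $\alpha$ and $\theta$, is the crux of the proof.
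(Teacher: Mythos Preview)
There is a genuine gap: your argument delivers an error of order $\sqrt{\eta}$, not $\mu$, and the two quantities are not comparable in the direction you need.

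The quantity $\mu$ is built from the \emph{two-well} distance $\dist(\nabla u,K)$, whereas every object you control---$\dist(\nabla u,SO(2)U_0)$ in Step~2 and $|\nabla u-RU_0|$ with a fixed $R$ in Step~3---is a \emph{one-well} quantity. Pointwise one has $|\nabla u-RU_0|\geq\dist(\nabla u,SO(2)U_0)\geq\dist(\nabla u,K)$, so averaging the left side over $M$ and then over segments can never give a bound by $c\mu r$. Concretely, take $\nabla u\in K$ everywhere, with a small island in $SO(2)U_1$; then $\mu=0$, yet on any segment crossing the island both $\dist(\nabla u,SO(2)U_0)$ and $|\nabla u-RU_0|$ are of order $\bar c$ there. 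Your Step~2 inequality is therefore false as stated, and your Step~3 error bound collapses to $c\sqrt{\eta}\,r$ at best (via FJM and Cauchy--Schwarz), which is a strictly weaker conclusion than \rf{eq:closew}. That weaker version does not suffice later: the cutting construction in Proposition~\ref{prop:reduc} uses the first line of \rf{eq:close_a}, which is exactly the $\mu$-scaling.

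What the paper actually does is different in two essential ways. First, the discrete coarea formula is applied to $\phi_u$ not to slice along segments but to produce a \emph{bad set} $\Omega_b=\{\phi_u\geq c_1\}$ with controlled perimeter and (via isoperimetry) small volume; on its complement $\nabla u$ is pointwise $c_1$-close to $SO(2)U_0$, so there the one-well and two-well distances coincide. A projection argument then shows that most straight segments $[x,y]$ miss $\Omega_b$ entirely, and on those segments the Fubini average of $\dist(\nabla u,K)$ legitimately yields the upper bound with $c\mu$. Second, the \emph{lower} bound cannot be obtained by integrating along $[x,y]$ in the domain (rotations vary along the segment, and there is no fixed $R$ giving an $O(\mu)$ error). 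Instead one must pass to the image: FJM is used only to find an affine comparison $A$ with $\|u-A\|_{L^\infty(\partial M')}$ small, a homotopy gives $\deg(u,M',\cdot)=1$ on $A(M'')$, and hence $u$ is injective off $u(\Omega_b)$. This yields, for most $(x,y)$, a curve $\gamma_{xy}$ in the domain on which $u$ is monotone onto $[u(x),u(y)]$; integrating $|\nabla_\tau u|\geq |U_0\tau|-\dist(\nabla u,K)$ along $\gamma_{xy}$ then gives the lower bound with $c\mu$. Your outline misses both the bad-set mechanism and the degree-theoretic injectivity step.
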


\begin{remark}
We recall that the symbol $\nabla_n$ in (\ref{eq:majority_phase}) denotes the discrete gradient as defined in (\ref{eq:discr_grad}).
\end{remark}

\begin{remark}
\label{rmk:rigid}
In the sequel, we call a line segment, $[x,y]$, with endpoints $x,y\in \Omega$ which satisfies (\ref{eq:closew}) \emph{rigid}.  This convention will, for instance, be used in Step 4a in the proof of Proposition \ref{prop:reduc}.
\end{remark}
\begin{remark}
In the proof of Proposition \ref{prop:twell} we closely follow the ideas of Conti and Schweizer \cite{CS06a}, however taking into account our discrete set-up. 
In particular, a priori we do not have enough regularity to apply $C^1$ degree theory. However, as our maps are piecewise affine, the weaker \emph{almost everywhere} results, which follow from degree theory, can be upgraded to hold \emph{everywhere}.\\
\end{remark}
\begin{remark}[Scaling]
We remark that for $r \sim \frac{1}{n}$ the smallness condition (\ref{eq:majority_phase}) turns into a pointwise condition. In the sequel, by scaling we will assume that $r=1$. 
\end{remark}

\begin{proof}
\emph{Step 1: Preliminaries -- Definition and estimates for the bad set.} We note that by (\ref{eq:majority_phase}) $U_0$ is the majority phase in $M$. For 
\bes
0<\tilde{c}_1:= \frac{1}{4}\min\{\bar{c},\bar{c}^2\},
\ees
with $\bar{c}$ as in (\ref{cb}), we consider sets of the form 
$$\Omega_b^{\tilde{c}_1}:=\overline{\{ x\in \Omega   \cap M: \phi_u^{ij} \geq \tilde{c}_1 \}}.$$ 
Here $\overline{\cdot}$ denotes the closure of a set. Since $u$ is a piecewise affine function, it is immediate that $\Omega_b^{\tilde{c}_1}$ consists of a finite union of grid triangles $\D_{ij}^n \subset \Omega_n$: 
\begin{align}
\label{eq:union}
\Omega_b^{\tilde{c}_1} = \bigcup\limits_{ij} \D_{ij}^n \mbox{ and } \partial \Omega_b^{\tilde{c}_1} =  \bigcup\limits_{ij} \partial \D_{ij}^n.
\end{align} 
By the discrete coarea formula (see Appendix C, in particular \rf{eq:coarea1})
\begin{align}
\int\limits_{0}^{\infty}\Per_M(\{(i,j) :\phi_u^{ij}\geq t\})dt \leq C \sum\limits_{(i,j)\in nM} \frac{1}{n^2} |\nabla_n \phi_u^{ij}|,
\end{align}
we infer that for each $\tilde{c}_1>0$ there exists $c_1\in (\tilde{c}_1, 2\tilde{c}_1)$ such that for $\Omega_b:=\Omega_b^{c_1},$ 
it holds
\begin{align*}
\Per_M(\Omega_b) \leq  C\frac{\eta}{\tilde{c}_1}.
\end{align*}
Hence, by the isoperimetric inequality and by the assumption that $U_0$ is the majority phase in our sample $M$, 
we infer that
\begin{align}
\label{eq:vol}
|\Omega_b|\leq C \frac{\eta}{\tilde{c}_1}.
\end{align}
By definition of the ``bad set'' $\Omega_b$, on its complement, i.e. on $M\setminus \Omega_b$, $\nabla u_n$ is $c_1$-close to $SO(2)U_0$. On the boundary however this is not necessarily true. Setting
\begin{align*}
(M\cap \partial \Omega_b)_{\epsilon}:= \{x\in M\setminus \Omega_b: \ \dist(x,\Omega_b)=\epsilon\} \mbox{ for any } \epsilon \in \left(0,\frac{1}{10n}\right),
\end{align*} 
and recalling the continuity of $u$ and the structure of $\partial \Omega_b$, we obtain
\begin{align*}
\mathcal{H}^1(u(M\cap \partial \Omega_b)) = \lim\limits_{\epsilon \rightarrow 0} \mathcal{H}^1(u(M\cap \partial \Omega_b)_{\epsilon}), \
\mathcal{H}^1(M\cap \partial \Omega_b) = \lim\limits_{\epsilon \rightarrow 0} \mathcal{H}^1((M\cap \partial \Omega_b)_{\epsilon}).
\end{align*}
Using these observations and the fact that $u(M  \cap \partial \Omega_b)$ and $u((M\cap \Omega_b)_{\epsilon})$ are rectifiable (as the images of Lipschitz sets under a Lipschitz function), we therefore obtain
\begin{equation}
\label{eq:hausd}
\begin{split}
\mathcal{H}^1(u(M  \cap \partial \Omega_b)) &\leq 2\mathcal{H}^1(u(M\cap\partial \Omega_b)_{\epsilon}) \leq 2\int\limits_{(M   \cap \partial \Omega_b)_{\epsilon}} |\nabla u| d\mathcal{H}^1 \\
&\leq 2(1+c_1) \mathcal{H}^1 ((M   \cap \partial \Omega_b)_{\epsilon}) \leq  4(1 +c_1) \mathcal{H}^1 (M   \cap \partial \Omega_b) \\
&\leq 4C \eta.
\end{split}
\end{equation}
for all sufficiently small $\eps$. Here we  invoked the area formula and used that $\nabla u$ is $c_1$-close to $SO(2)U_0$ in $(M\cap \partial \Omega_b)_{\epsilon}$.
Moreover,
\begin{equation}
\label{eq:det}
\begin{split}
|u(\Omega_b)| & \leq  \int\limits_{\Omega_b}|\det(\nabla u)| dx \leq C\int\limits_{\Omega_b}(1+\dist^2(\nabla u, SO(2)U_0)) dx \\
&\leq C|\Omega_b| + C \sum\limits_{(i,j)\in n M} \frac{1}{n^2} \phi_u^{ij} \leq C \eta,
\end{split}
\end{equation}
where we exploited the two-growth (\ref{eq:growth_a}) and estimated 
\begin{align*}
|\det(\nabla u)| \leq \left\{
\begin{array}{ll}
1, & \mbox{ if } |\nabla u| \leq 20 c_1 / \tilde{c}_1,\\
C\dist(\nabla u, SO(2)U_0)^2, & \mbox{ if } |\nabla u| \geq  20 c_1 / \tilde{c}_1.
\end{array}  \right.
\end{align*}
Thus, this argument yields control on the size of the measure of the bad points in the reference configuration. 
As in \cite{CS06a} this results in a one-sided control of the type (\ref{eq:closew}), which for completeness is presented in Step 2.\\

\emph{Step 2: The upper bound.}
In this step we consider integrals on segments $[x,y]\subset M$ of certain functions $f$ of $\nabla u\in PC(\Omega,\,\mR^2)$.
We will use that for any direction $\frac{x-y}{|x-y|}$ which is not parallel to the grid edges and for almost any point $x$ the restrictions $\nabla u|_{[x,y]}$ and $f(\nabla u)|_{[x,y]}$ are well-defined $L^1([x,y])$ functions.
However, the grid edges constitute a zero set in $B_{\alpha }(x_0) \times B_{\alpha }(y_0)$ and thus we will exclude this set from our consideration.
Additionally we use the convention of saying that certain statements are valid \emph{for most pairs} $(x,y)$ belonging to a certain set. This will mean that the Lebesgue measure
of such pairs $(x,y)$ constitutes at least $(1-\theta)$ total measure of the latter set. Obtaining such a statement will in particular lead to a $\theta$ dependence in the relevant constants. Often even several properties have to be satisfied on a set of size  $(1-\theta)$ of the total measure, which leads to additional losses in the constants.  \\

With these preliminaries and as in \cite{CS06a} we claim that:
\begin{itemize}
\item[(i)] For most pairs $(x,y)\in (B_{\alpha }(x_0) \setminus \Omega_{b}) \times (B_{\alpha }(y_0) \setminus \Omega_b)$ it holds
\begin{align*}
[x,y]  \cap \Omega_b = \emptyset.
\end{align*}
\item[(ii)] For most pairs $(x,y)\in (B_{\alpha }(x_0) \setminus \Omega_{b}) \times (B_{\alpha }(y_0) \setminus \Omega_b)$ the restriction $\nabla u|_{[x,y]}$ is a well-defined $L^1([x,y])$ function. Moreover, there exists a constant $c=c(\theta,\alpha)>0$ such that for most pairs $(x,y)\in (B_{\alpha }(x_0) \setminus \Omega_{b}) \times (B_{\alpha }(y_0) \setminus \Omega_b$
\begin{align}
\label{eq:good_est}
\int\limits_{[x,y]}\dist(\nabla u, SO(2)U_0) d\mathcal{H}^1 \leq c \mu,
\end{align}
where $[x,y]$ denotes the line segment connecting $x$ and $y$.
\item[(iii)] There exists a constant $c=c(\theta,\alpha)>0$ such that following estimate is true
\begin{align}
\lb{URE}
|u(x)-u(y)| \leq |U_0(x-y)| + c \mu. 
\end{align}
\end{itemize}
We begin by discussing properties (i) and (ii). To this end we first observe that, by virtue of (\ref{eq:vol}), 
\begin{align*}
|B_{\alpha }(x_0) \setminus \Omega_{b}| \geq c \alpha^2, \ |B_{\alpha }(y_0) \setminus \Omega_{b}| \geq c \alpha^2,
\end{align*} 
for all sufficiently small $\eta$ (chosen sufficiently small in dependence of $\theta$ in order to guarantee that a sufficiently large volume of ``good points'' is left). As in \cite{CS06a} we now consider the function $f(x):= \dist(\nabla u(x), SO(2)U_0 \cup SO(2)U_1)\chi_{M}$. 
By averaging we show that for almost all $(x,y)\in B_{\alpha}(x_0)\times B_{\alpha}(y_0)$
\begin{align}
\label{eq:restr}
\int\limits_{[x,y]}f d\mathcal{H}^1  <\infty,
\end{align} 
i.e. $f|_{[x,y]}\in L^1([x,y])$.
Considering $\nu\in S^1$ and the line $t \mapsto x + t \nu$, the vector $\nu_0 := \frac{(y_0 - x_0)}{|y_0-x_0|}$ and noting that only those lines with $|\nu-\nu_0|\leq 3 \alpha$ intersect $B_{\alpha r}(y_0)\setminus \Omega_b$, yields
\begin{equation}
\label{eq:average_1}
\begin{split}
\int\limits_{(B_{\alpha }(x_0) \setminus \Omega_{b}) \times (B_{\alpha }(y_0) \setminus \Omega_b)} \int\limits_{[x,y]}f d\mathcal{H}^1 
& \leq c \alpha \int\limits_{|\nu- \nu_0|\leq 3 \alpha} \int\limits_{B_{\alpha }(x_0) \setminus \Omega_{b}} \int\limits_{\mathbb{R}} f(x+t \nu) dt dx d\nu\\
& \leq C \alpha^2 \int\limits_{M} f dx \leq C \alpha^3 \mu.
\end{split}
\end{equation}
This shows (\ref{eq:restr}). Keeping this in mind, we now prove (i). As in \cite{CS06a} the result follows from a projection argument. In fact we claim that for any $\nu\in S^1$ the set 
\begin{align*}
A_{\nu}:= \{ x\in  B_1 \setminus \Omega_{b}:\ t \mapsto x + t\nu \mbox{ intersects } \partial \Omega_b\}
\end{align*}
satisfies 
\begin{align*}
|A_{\nu}| \leq 2 \alpha \mathcal{H}^1(\partial \Omega_b) \leq c \alpha \eta.
\end{align*}
Indeed, writing $x=x_0 + s\nu+ r \nu^{\perp}$ for each $x\in A_{\nu}$, we have that $x_0 + r \nu^{\perp}$ is contained in the orthogonal projection of $\partial \Omega_b$ onto the line through $x_0$ parallel to $\nu^{\perp}$. This yields the claim of (i). Choosing $c=c(\theta,\alpha)> \frac{C \alpha^3}{\theta}$, where $C$ denotes the constant from (\ref{eq:average_1}), then also yields (ii). Here we exploited that by (i), $f= \dist(\nabla u,SO(2) U_0 )$ along most of the line segments $[x,y]$ and that by our choice of $c=c(\theta,\alpha)$, (\ref{eq:average_1}) implies that (\ref{eq:good_est}) holds on a set of measure of size at least $(1-\theta)|B_{\alpha}(x_0)\times B_{\alpha}(y_0)|$.\\

Last but not least we present the argument for (iii). By (ii) we have
\begin{align*}
|u(x)-u(y)| &\leq \int\limits_{[x,y]}|\nabla_{\tau} u| d\mathcal{H}^1 \\
& \leq |U_0(x-y)| + c\int\limits_{[x,y]} \dist(\nabla u, SO(2)U_0) d\mathcal{H}^1\\
& \leq |U_0(x-y)| + c \mu,
\end{align*}
where $\nabla_{\tau} u$ denotes the tangential derivative.\\

\emph{Step 3: The lower bound.}
As in \cite{CS06a} the lower bound is the most delicate part of the proof, since our (piecewise affine) deformation is not necessarily invertible. In addition to this key difficulty, which was overcome by Conti and Schweizer \cite{CS06a} by showing that the deformation is invertible on ``a sufficiently large set'', we have to deal with the fact that our deformation is \emph{not a priori regular}. Since we are working with a piecewise affine interpolation of our discrete data, we cannot immediately invoke the degree theory arguments of Conti and Schweizer (which a priori only apply to $C^1$ functions). Instead we use an integrated formula for the degree.\\ 

Again we argue in five steps:
\begin{itemize}
\item[(i)] (Approximate injectivity) 
Let $\alpha''= 3\alpha/2$ and $M'':= \inte \conv(B_{\alpha'' }(x_0) \cup B_{\alpha'' }(y_0))$. 
Then there exists $\alpha'\in (7 \alpha/4, 2 \alpha)$ and an affine deformation $A:M  \rightarrow \mathbb{R}^2$ with gradient in $SO(2)U_0$, such that for all $z\in A(M'') \setminus u(\Omega_b)$ there exists a unique preimage $x\in M'   \cap u^{-1}(z)$. Here $M':= \inte \conv(B_{\alpha' }(x_0) \cup B_{\alpha' }(y_0))$.  
\item[(ii)] Let $A$ be the mapping from (i). Then for the set
\begin{align*}
\Omega_{bi}:= \{ x \in M': \ u(x) \in u(\Omega_b)  \cap A(M'') \}
\end{align*}
we have $|\Omega_{bi}|\leq c \eta$. 
\item[(iii)] We claim that for most choices of $(x,y) \in (B_{\alpha''}(x_0) \setminus \Omega_b) \times (B_{\alpha'' }(y_0) \setminus \Omega_b)$
\begin{itemize}
\item[(a)]
$ [u(x), u(y)]   \cap u(\partial \Omega_{b}) = \emptyset,$
\item[(b)] $ [u(x), u(y)] \subset u(M' \setminus \Omega_{b}), $
\item[(c)] there exists a piecewise affine curve $\gamma_{xy}(t):[0,1] \rightarrow [u(x),u(y)]$, such that $u \circ \gamma_{xy}$ is a monotonic parametrization of the segment $[u(x),u(y)]$.
\end{itemize}
\item[(iv)] Let $\gamma_{xy}$ be a piecewise affine curve such that $u \circ \gamma_{xy}$ is a monotonic parametrization of the segment $[u(x),u(y)]$. Then for most pairs $(x,y)\in (B_{\alpha''}(x_0) \setminus \Omega_b) \times (B_{\alpha''}(y_0) \setminus \Omega_b)$ we have that $\dist(\nabla u, SO(2)U_0)|_{\gamma_{xy}}$ is an $L^{1}(\gamma_{xy})$ function. Furthermore, there exists a constant $c=c(\theta,\alpha)>0$ such that for most pairs $(x,y)$
\begin{align*}
\int\limits_{\gamma_{xy}}\dist(\nabla u, SO(2)U_0) d\mathcal{H}^1 \leq c \mu.
\end{align*}
\item[(v)] There exists a constant $c=c(\theta,\alpha)>0$ such that for most pairs $(x,y)\in (B_{\alpha''}(x_0) \setminus \Omega_b) \times (B_{\alpha'' }(y_0) \setminus \Omega_b)$
\begin{align*}
|u(x)-u(y)| \geq |U_0(x-y)|-c\mu.
\end{align*}
\end{itemize}
The crucial point in the argument consists of the derivation of (i). As soon as this step is established, the remaining argument follows along the lines of~\cite{CS06a}.\\

In order to deduce (i), we argue in various steps. Firstly, following \cite{CS06a} and using (\ref{eq:growth_a}), we observe that 
\begin{align*}
\int\limits_{M }\dist^2(\nabla u, SO(2)U_0) dx &\leq  C \sum\limits_{(i,j) \in n M} \frac{1}{n^2} \phi_u^{ij}  \leq C \eta.
\end{align*}
Hence, the Friesecke, James, Müller rigidity theorem \cite{FJM05} implies that there exists $Q\in SO(2)$ such that
\begin{align*}
\int\limits_{M}|\nabla u - QU_0|^2 dx  \leq C \eta.
\end{align*}
Combined with Poincar\'e's inequality and setting $A=QU_0 x +b$, this leads to
\begin{align*}
\int\limits_{M}|\nabla u - \nabla A|^2 + |u- A|^2 dx  \leq C \eta.
\end{align*}
Thus, there exists a radius $\alpha'\in (7 \alpha/4, 2 \alpha)$ such that for $M':= \inte\conv(B_{\alpha' }(x_0) \cup B_{\alpha' }(y_0))$ we have
\begin{align*}
\int\limits_{\partial M'}|\nabla u - \nabla A|^2 + |u- A|^2 dx  \leq C \eta.
\end{align*}
By the embedding of $W^{1,2}(\partial M')  \rightarrow L^{\infty}(\partial M')$ we hence infer
\begin{align*}
\| u- A\|_{L^{\infty}(\partial M')} \leq c \eta^{\frac{1}{2}}.
\end{align*}
As in \cite{CS06a}, this allows us to conclude that for $\alpha'':= 3 \alpha/2$, $M'':=\inte\conv(B_{\alpha'' }(x_0) \cup B_{\alpha'' }(y_0))$ and a sufficiently small choice of $\eta>0$, the degree of 
\begin{align*}
v_t(x):=tu(x) + (1-t)A(x)
\end{align*}
is well defined in $A(M'')$ (as $v_t(\partial M')\cap A(M'') = \emptyset$). By homotopy invariance
\begin{align}
\label{eq:deg}
\deg(u,M',z)= 1 \mbox{ for all } z\in A(M'').
\end{align}
We note that $M''$ is open and as $A$ is affine, this is also true for $A(M'')$. Moreover, $\overline{B_{\alpha }(x_0) \cup B_{\alpha }(y_0)}$ is contained in $M''$.
Hence, by the change of variables formula in terms of the degree and the multiplicity function, we have
\begin{equation}
\label{eq:deg1}
\begin{split}
\int\limits_{\mathbb{R}^2}v(z)d(u, M', z) dz &= \int\limits_{M'}v\circ u(x) \det(\nabla u(x)) dx,\\
\int\limits_{\mathbb{R}^2}v(z)N(u, M', z) dz &= \int\limits_{M'}v\circ u(x) |\det(\nabla u(x))| dx,
\end{split}
\end{equation}
for all $v\in L^{\infty}(\mathbb{R}^2)$.
Thus, recalling that (by the non-interpenetration condition $u\in\mathcal{A}_n$) $\det(\nabla u)>0$ almost everywhere in $M$  and choosing functions $v\in L^{\infty}(\mR^2)$ with $\mbox{supp}(v)\subset A(M'')\setminus u(\Omega_{b})$, we obtain
\begin{equation}
\label{eq:deg2}
\begin{split}
\int \limits_{\mathbb{R}^2}v(z) dz & = \int\limits_{\mathbb{R}^2}v(z)d(u, M', z) dz = \int\limits_{M'}v\circ u(x) \det(\nabla u(x)) dx\\
&=\int\limits_{\mathbb{R}^2}v(z)N(u, M', z) dz.
\end{split}
\end{equation}
Therefore, 
\begin{align*}
N(u,M',z)=1 \mbox{ for almost all } z\in A(M'')\setminus u(\Omega_b). 
\end{align*}
This is the desired uniqueness of the preimage of $z$ under $u$ for \emph{almost all} $z\in A(M'')\setminus u(\Omega_b)$. 
We now argue, that this uniqueness statement can be extended to \emph{all} $z\in A(M'')\setminus u(\Omega_b)$. Indeed, this is a consequence of the definition of $\Omega_b$ (which is a closed set), the fact that our function $u$ is piecewise affine with a locally invertible gradient and the implicit function theorem.
We argue by contradiction: Assume that there existed $z\in A(M'')\setminus u(\Omega_b)$ such that it had two preimages $y_1, y_2 \in M'$. By definition, $y_1,y_2 \notin \Omega_b$. We argue that this already implies the existence of a set of positive measure in $A(M'')\setminus u(\Omega_b)$ on which the injectivity of $u$ is violated, which can not be the case by our almost everywhere invertibility result from above.
To this end, we distinguish three different cases:
\begin{itemize}
\item[(a)] We first assume that there exist grid triangles (which without loss of generality we take as ``+'' triangles) $\Delta_{ij}^{n,+}, \Delta_{kl}^{n,+} \subset M'$ such that $y_1 \in \inte(\D_{ij}^{n,+})$, $y_2\in \inte(\D_{kl}^{n,+})$. By definition of $\Omega_b$ as the \emph{closure} of the ``bad set'' and by the piecewise affine definition of $u$, this yields that also $\Delta_{ij}^{n,+}, \Delta_{kl}^{n,+} \subset M' \setminus \Omega_b$. 
Since $\nabla u$ is an affine function on each of the triangles and as $\det(\nabla u)\neq 0$ on these, the implicit function theorem immediately yields that there exist whole neighborhoods $U_1,U_2$ of $y_1,y_2$ such that $u(U_1)=u(U_2)$. In particular, using the openness of $A(M'')$, we would obtain a set $V\subset A(M'')\setminus u(\Omega_b)$ of non-zero measure such that for all $z\in V$ the preimage under $u$ is not unique. This yields a contradiction.
\item[(b)] We now suppose that $y_1 \in \D_{ij}^{n,+} \cap M'$, $y_2\in \D_{kl}^{n,+} \cap M'$ are such that at least one of the points $y_1$ or $y_2$ lies on $\partial \D_{ij}^{n,+}$ or $\partial \D_{kl}^{n,+}$, but neither of them is a vertex of the underlying grid. In this case we cannot directly argue via the implicit function theorem as our function is only Lipschitz regular. However, assuming that for instance the point $y_1$ lies at the interface between two grid triangles $\D_{rs}^{n,\pm}$ and $\D_{ij}^{n,\pm}$ (the superscript $\pm$ which is used here indicates that we do not make an assumption in which of the two grid triangles this may be the case), we can invoke the implicit function theorem in each of the triangles (using the invertibility of the gradient on each of the triangles) to obtain two neighborhoods $\tilde{U}_1  \subset \D_{rs}^{n,\pm}\cap M', \tilde{U}_2 \subset \D_{ij}^{n,\pm}\cap M'$ (for which we use the openness of $M'$) whose closures intersect along a line containing $y_1$. In order to avoid a contradiction to the almost everywhere invertibility already at this point, $u$ has to be one-to-one on $U_1:=\overline{\tilde{U}_1} \cup \overline{\tilde{U}}_2$. In particular, $u$ maps $U_1$ onto a full neighborhood of $z:=u(y_1)$.
But arguing similarly for $y_2$ and using the openness of $A(M'')$ we again obtain a set $V\subset A(M'')\setminus u(\Omega_b)$ of non-zero measure such that $u$ does not have a unique preimage.
\item[(c)] Finally, we have to cover the case where at least one of the points $y_1$ or $y_2$ is a vertex of (at least) one of the triangles $\D_{ij}^{n,+}$ or $\D_{kl}^{n,+}$. However, similarly as above we can again construct full neighborhoods of $y_1$ and $y_2$ on which injectivity is violated, which yields a contradiction.
\end{itemize}
Hence, we conclude that, indeed, for all $z\in A(M'')\setminus u(\Omega_b)$ there is a unique preimage $x\in M'$.\\

We now proceed with the remaining points (ii)-(v). Thanks to (i), this follows along the lines of the argument of Conti and Schweizer \cite{CS06a}.\\

Recalling the definition of $\Omega_{bi}$ from (ii), we estimate
\begin{align*}
|\Omega_{bi}| &\leq |M'   \cap \Omega_b| + \int\limits_{M' \setminus \Omega_b} \chi_{\Omega_{bi}} dx\\
& \leq |\Omega_{b}| + 2 \int\limits_{u(\Omega_b)  \cap A(M'')}\#(u^{-1}(z)  \cap \Omega' \setminus \Omega_{b}) dz\\
& \leq C\eta + 2 \int\limits_{u(\Omega_b)  \cap A(M'')}\#(u^{-1}(z)  \cap \Omega' \setminus \Omega_{b})dz.
\end{align*}
Here we used the closeness of $\nabla u$ to $SO(2)U_0$ away from $\Omega_b$.
Due (\ref{eq:deg1}), for almost every $z\in \Omega_b   \cap A(M'')$
\begin{align*}
\#(u^{-1}(z)  \cap M' \setminus \Omega_b) \leq 1 + \#(u^{-1}(z)  \cap M'   \cap \Omega_b).
\end{align*}
Moreover, the coarea formula, (\ref{eq:det}) and the estimate for the determinant in terms of $\phi^{ij}_{u}$ give
\begin{align*}
\int\limits_{u(\Omega_b)}\#(u^{-1}(z)  \cap \Omega_{b})dx
& = \int\limits_{\Omega_b} |\det(\nabla u)| dx  \leq C \eta.
\end{align*}
 This yields the claim. 
\\

By (ii) for most choices of $(x,y)\in M' \times M'$ we have that $u(x), u(y) \not\in u(\Omega_b)$. By a similar projection argument as in Step 2, we hence obtain that $[u(x),u(y)]  \cap u(\partial \Omega_{b}) = \emptyset$ for most pairs $(x,y)$. Indeed, to this end, for $z\in\{x_0,y_0\}$ we consider the sets 
\begin{align*}
\tilde{B}_{\alpha' }(z):= \{ x\in B_{\alpha' }(z)\setminus(\Omega_b  \cup \Omega_{bi}) : |u(x)-A(x)|\leq \frac{\alpha}{4} \}.
\end{align*}
Then, by the size estimates for $\Omega_b$ and $\Omega_{bi}$ (i.e. by equation (\ref{eq:vol}) and claim (ii) in Step 3 from above), this still covers nearly the original volume of $B_{\alpha' }(z)$, $i\in\{1,2\}$, $z\in\{x_0,y_0\}$ if $\eta$ (as a function of $\theta$) is chosen sufficiently small. Reasoning by a projection argument once more, we consider the lines $t\mapsto \xi + t \nu$ for $\nu$ such that $|\nu-(\nabla A) \nu_0|\leq 3 \alpha$. Considering the set
\begin{align*}
\tilde{A}_{\nu}:= \{\xi\in u(\tilde{B}_{\alpha' }(x_0)): x+t\nu \mbox{ intersects } u(\partial \Omega_{b})\},
\end{align*}
and using estimate (\ref{eq:hausd}), we obtain the claim of (iii) (a).
Part (b) now immediately follows from part (a) by noting that for all $x\in \tilde{B}_{\alpha }(z)$, $z\in\{x_0, y_0\}$, by definition $u(x)\in u(B_{\alpha'}(x_0) \setminus  (\Omega_b  \cup \Omega_{bi}))$, and by using (a). Moreover, the claim of (c) follows directly from (b) and the invertibility of $u$ along for all $z\in A(M'')\setminus u(\Omega_b)$. 

The argument for (iv) follows along the lines of \cite{CS06a} and is very similar to point (i) in Step 2.
We however have to establish that for most pairs $(x,y)$ the restrictions of $\nabla u$ onto the line segments $\gamma_{xy}$ are well-defined as $L^1(\gamma_{xy})$ functions.
Indeed, the well-definedness of the restriction follows from the claim that for most pairs $(x,y)$ the piecewise affine curve $\gamma_{xy}$ does not contain line segments of $G^n(\Omega)$. To this end, we observe that $u(G^n):= \bigcup\limits_{[z_1,z_2]\in G^n(\Omega)} [u(z_1),u(z_2)]$ forms a zero set in $u(\tilde{B}_{\alpha' }(x_0)) \times u(\tilde{B}_{\alpha' }(y_0)) $. Further, $\gamma_{xy}$ can only contain a line segment in $G^n$ if there exists a line segment $[u(x_1), u(x_2)] \subset [u(x),u(y)]$ with $[u(x_1), u(x_2)] \subset u(G^n)$. But $u(G^n)$ is a zero set, thus for almost all pairs $(x,y)$ this does not happen.

Last but not least, we observe that by the monotonicity of $u$ along the curve $\gamma_{xy}$ and the fact that for most curves $\gamma_{xy}$ the restriction $\nabla u|_{\gamma_{xy}}$  is a well-defined $L^1(\gamma_{xy})$ function, we have
\begin{align*}
|u(x)- u(y)| &= \int\limits_{\gamma_{xy}}|\nabla_{\tau}u|d \mathcal{H}^1  \geq \mathcal{H}^1(U_0 \gamma_{xy}) - \int\limits_{\gamma_{xy}}\dist(\nabla u, SO(2)U_0) d\mathcal{H}^1\\
& \geq |U_0(x-y)| - c \mu.
\end{align*}
As before, this requirement that this holds for most pairs $(x,y)\in B_{\alpha}(x_0)\times B_{\alpha}(y_0)$ leads to a $\theta$ dependence of the constant $c=c(\alpha,\theta)>0$.
This concludes the proof of Proposition \ref{prop:twell}.
\end{proof}

\section{Surface Energies}
\label{surface}
In this section we investigate the emergence and form of surface energies: After introducing the limiting surface energies in Section  \ref{sec:setup} we deduce some fundamental properties of these in Section  \ref{sec:prop_energy} and finally prove the desired $\Gamma$-limit in Section  \ref{sec:Gamma}.
\subsection{Setting}
\label{sec:setup}
In the context of deformations on $\Omega$ which are in the surface energy scaling regime (\ref{eq:surfscaling}), we define 
\begin{align*}
H_n^1(u_n) := n H_n(u_n).
\end{align*}
Due to its scaling, we interpret it as a surface energy. Before formulating our main result on the limiting structure of $H_n^{1}(u_n)$ as $n  \rightarrow \infty$, we introduce the central objects of this section. We start by defining the \emph{limiting profiles}:
\begin{defi}[Limiting profiles]
\label{defi:profile}
Let $V \in \{U_0, Q U_1 \}$ and $V_1,V_2 \in SO(2)U_0   \cup SO(2) U_1$ be two rank-one connected matrices. Let $F_{\lambda}:= \lambda U_0 + (1-\lambda) Q U_1$ be as in (\ref{eq:Fl}). Then we define the \emph{limiting profiles} as
\begin{align*}
v_{F_{\lambda}, V}(x)&:=  \left\{ \begin{array}{ll}
F_{\lambda} x &\mbox{ for } x\cdot \begin{pmatrix}1,1\end{pmatrix} \leq 0,\\
V x & \mbox{ for } x \cdot \begin{pmatrix}1,1\end{pmatrix} > 0,
\end{array}  \right.\\
v_{V, F_{\lambda}}(x)&:=  \left\{ \begin{array}{ll}
F_{\lambda} x &\mbox{ for } x\cdot \begin{pmatrix}1,1\end{pmatrix} \geq 0,\\
V x & \mbox{ for } x \cdot \begin{pmatrix}1,1\end{pmatrix} < 0,
\end{array}  \right.\\
v_{V_1, V_2}^{\pm}(x)&:=  \left\{ \begin{array}{ll}
V_1 x &\mbox{ for } x\cdot \begin{pmatrix}\pm1,1\end{pmatrix} \leq 0,\\
V_2 x & \mbox{ for } x \cdot \begin{pmatrix}\pm 1,1\end{pmatrix} > 0.
\end{array}  \right.
\end{align*}
\end{defi}

With this at hand, we introduce the following boundary and internal layer energies. 

\begin{defi}[Boundary and internal layer energies]
\label{defi:boundarylayers}
Let $V \in \{U_0, Q U_1 \}$ and $V_1,V_2 \in K$ be two rank-one connected matrices. Let $F_{\lambda}$ be as in (\ref{eq:Fl}). 
Then we define the \emph{left boundary layer energy}, $B_{+}(F_\lambda,V)$, the \emph{internal layer energies}, $C_{\pm}(V_1,V_2)$, and the \emph{right boundary layer energy}, $B_{-}(F_\lambda,V)$ as
\begin{equation}
\label{eq:layer_energy1}
\begin{split}
B_+(F_{\lambda},V) &:= \inf\{ \liminf\limits_{n  \rightarrow \infty} \sum\limits_{(i,j)\in n\Omega} \frac{1}{n} h_{v_n}^{i,\,j}:
\,v_n\in\mathcal{A}_{n\Omega}, \  v_n  \rightarrow v_{F_{\lambda},V} \mbox{ in } L^1(\Omega),\\
& \quad \quad \quad v_n(i/n,j/n)= F_{\lambda}(i/n,j/n) \mbox{ for } i+j\leq -4n\},\\
B_-(V,F_{\lambda}) &:=  \inf \{ \liminf\limits_{n  \rightarrow \infty}  \sum\limits_{(i,j)\in n\Omega} \frac{1}{n}  h_{v_n}^{i,\,j}:
\, v_n\in\mathcal{A}_{n\Omega}, \  v_n  \rightarrow v_{V,F_{\lambda}}\mbox{ in } L^1(\Omega)\\
& \quad \quad \quad v_n(i/n,j/n)= F_{\lambda}(i/n,j/n) \mbox{ for } i+j\geq 4n\},\\
C_{\pm}(V_1, V_2) &:=  \inf \{  \liminf\limits_{n  \rightarrow \infty} \sum\limits_{(i,j)\in n\Omega_{4,1}^{\pm}} \frac{1}{n}  h_{v_n}^{i,\,j}:
\, v_n\in\mathcal{A}_{n\Omega_{4,1}^\pm}, \  v_n  \rightarrow v_{V_1,V_2}^{\pm} \mbox{ in } L^1(\Omega_{4,1}^{\pm})\},
\end{split}
\end{equation}
where the domains $\Omega, \Omega_{l,d}^{\pm}$ and the sets $\mathcal{A}_{n\Omega_{4,1}^\pm}$ are as in Definition \ref{defi:setup}.
\end{defi}

We show that in the sense of $\Gamma$-limits we can identify the energy $H^1_n$ with an energy which is concentrated on the jump surfaces of a limiting configuration $u_0$. We recall that the limiting deformations which arise from passing to the limit $n\rightarrow \infty$ of discrete deformations in the surface energy scaling regime (\ref{eq:surfscaling}), are \emph{rigid} (c.f. Remark \ref{rmk:rigid}). More precisely, they satisfy the structure result of Proposition \ref{prop:rig2} and are hence locally simple laminates. Using this, we give the following definitions:

\begin{defi}[Limiting energy]
\label{defi:limiting energy}
Let $u_0$ be a piecewise affine function with gradient $\nabla u_0 \in SO(2)U_0  \cup SO(2)U_1$. Suppose that it satisfies the boundary conditions (\ref{eq:bc}) and that it has finitely many jump interfaces which pass through the points $(x_l,0)$, with $l\in\{0,\dots,L-1\}$ for $L\in \N$. Let the boundary and internal layers be as in Definition  \ref{defi:boundarylayers}.
Then we set
\begin{equation}
\label{eq:total_energy1}
\begin{split}
&\bar{E}_{surf}(u_0) \\
&:= B_+(F_{\lambda}, \nabla u_0(x_{0}-,0))  + \sum\limits_{i=1}^{L-1} C_{\pm}(\nabla u_0(x_{i}-,0), \nabla u_0(x_{i+1}-,0))\\
& \quad + B_-(\nabla u_0(x_{(L-1)}-,0), F_{\lambda}),\\
&=: \int\limits_{J_{\nabla u_0}} \bar{C}(\nabla u_0(x-,0), \nabla u_0(x+,0)) d\mathcal{H}^1,
\end{split}
\end{equation}
where $u_0(x-):= \lim\limits_{\substack{y  \rightarrow x,\\ y_1 \leq x_1}}u_0(y)$, and, depending on the position and orientation of the jump plane and the values of $\nabla u_0$ at $x\in J_{\nabla u_0}$, the density $\bar{C}(\cdot, \cdot)$ satisfies $\sqrt{2}\bar{C}(\cdot)\in \{B_{+}(\cdot, \cdot), B_{-}(\cdot, \cdot), C_{\pm}(\cdot, \cdot)\}$.
\end{defi}

With these notions at hand, we can finally formulate our main result regarding surface energies:

\begin{theorem}[Surface energies]
\label{prop:surface}
With respect to the $L^{1}$ topology we have that
\begin{align*}
H_n^1 \stackrel{\Gamma}{ \rightharpoonup} E_{surf},
\end{align*}
where 
\begin{align*}
E_{surf}(u):=\left\{ \begin{array}{ll}
\bar{E}_{surf}(u), \mbox{ if } u\in W^{1,\infty}_0(\Omega)+F_{\lambda}x + c \mbox{ for some } c \in \mR^2, \\
 \ \ \ \ \ \ \  \mbox{ with } \nabla
u\in\{U_0,QU_1\} \mbox{ and } \nabla u \in BV;\\ 
\infty, \ \ \mbox{else}.
\end{array}  \right.
\end{align*}
Here $J_{\nabla u}$ denotes the \emph{jump set} of $\nabla u$ and $u(x-):= \lim\limits_{\substack{y  \rightarrow x,\\ y_1 \leq x_1}} u(y)$.
\end{theorem}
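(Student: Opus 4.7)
The proof splits into the standard two inequalities of $\Gamma$-convergence with respect to the $L^1$ topology. Compactness for sequences with $H^1_n(u_n) \leq C$ is already in hand: Proposition \ref{prop:rig2} forces any $L^1$-limit $u_0$ to be a piecewise affine simple laminate with $\nabla u_0 \in SO(2)U_0 \cup SO(2)U_1$, with finitely many jump interfaces supported on diagonal lines with normals $(1,\pm1)/\sqrt{2}$, and satisfying the affine boundary conditions of (\ref{eq:bc}); the gradient is therefore BV with a finite jump set $J_{\nabla u_0}$. On the complement of this good class $E_{surf}$ equals $+\infty$, so the inequality is trivial; consequently, both halves of the argument may be restricted to the case in which $u_0$ has the structure just described, with jump points $(x_0,0),\dots,(x_{L-1},0)$ along the horizontal axis.

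For the $\Gamma\text{-}\liminf$ inequality I would localize around each jump. Pick disjoint open parallelograms $P_l=\Omega_{d_l,l_l}^{\pm}(x_l,0)$ oriented so that $P_l$ contains exactly the $l$-th jump segment of $u_0$ on its diagonal; the boundary cases $l=0$ and $l=L-1$ are taken as half-parallelograms abutting the prescribed affine region so that $u_n|_{P_l}\to v_{F_\lambda,V_l}$ or $v_{V_{L-1},F_\lambda}$, while interior jumps give convergence to $v_{V_l,V_{l+1}}^{\pm}$ (up to rigid translation, which does not affect the energy). Using non-negativity of $h_u^{i,j}$ I split
\[
H_n^1(u_n)=n H_n(u_n)\;\geq\;\sum_{l=0}^{L-1}\sum_{(i,j)\in n P_l}\frac{1}{n}h_{u_n}^{i,j},
\]
and by the infimum definitions in Definition \ref{defi:boundarylayers} the $\liminf$ of each inner sum is $\geq B_\pm$ (for $l=0,L-1$) or $\geq C_\pm(V_l,V_{l+1})$ (for interior $l$), which sums to $\bar E_{surf}(u_0)=E_{surf}(u_0)$.

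For the $\Gamma\text{-}\limsup$ I construct a recovery sequence by gluing near-optimal competitors for each layer energy. For $\delta>0$ choose, for each $l$, a sequence $w_n^{(l)}\in\mathcal{A}_{n\Omega_{d_l,l_l}^\pm}(x_l,0)$ whose $L^1$ limit is the appropriate layer profile and whose energy is within $\delta/L$ of the relevant infimum $B_\pm$ or $C_\pm$. These candidates are supported on overlapping strips, so the main issue is to glue them along vertical strips between consecutive jump lines while preserving continuity, non-interpenetration, and the affine behavior of $u_0$ outside the layers. Here I invoke the two-well rigidity Proposition \ref{prop:twell}: on a strip away from the current jump, the competitor $w_n^{(l)}$ has bulk energy $o(1)$, hence by Proposition \ref{prop:twell} a positive-measure family of horizontal line segments in the strip is rigid in the sense of Remark \ref{rmk:rigid}. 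Along such rigid lines $w_n^{(l)}$ differs from the target affine piece $V_l x + c_l$ by at most $o(1)$, so I can replace $w_n^{(l)}$ with this affine piece on one side of the line (and paste in $w_n^{(l+1)}$ or the boundary affine data on the other) at a cost of $o(1/n)$ in the unscaled Hamiltonian, hence $o(1)$ in $H_n^1$; this is the cutting mechanism described in Section \ref{sec:Gamma}. Summing contributions yields $\limsup H_n^1(u_n)\leq \bar E_{surf}(u_0)+\delta$, and sending $\delta\to 0$ completes the inequality.

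The main obstacle is the cutting argument in the discrete setting. In the continuous model of Conti-Schweizer one may cut along any rigid line, but here the glued map must remain a piecewise affine lattice function satisfying the non-interpenetration condition (\ref{eq:non}), and the cutting line must be realized by a perturbation of the grid (the algorithm alluded to in Appendix \ref{app:alg}). Verifying that the perturbed grid exists, that the pieced-together deformation remains admissible, that the rigid-line estimate from Proposition \ref{prop:twell} transfers to a true $o(1/n)$ energy error after gluing, and that the affine offsets $c_l$ in successive pieces can be matched so that the global boundary constant $c\in\mathbb{R}^2$ from (\ref{eq:bc}) is recovered — these are the technical steps that carry the bulk of the work and must be executed with care on the lattice rather than in the continuum.
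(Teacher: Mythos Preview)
Your outline follows the paper's overall architecture, but there are two substantive gaps that the paper has to work hard to close, and which your sketch does not address.

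\textbf{The subsequence-to-full-sequence problem in the $\limsup$.} You write ``choose, for each $l$, a sequence $w_n^{(l)}$ whose energy is within $\delta/L$ of the relevant infimum''. But the layer energies in Definition \ref{defi:boundarylayers} are defined as an infimum of $\liminf$'s, so near-optimality is only guaranteed along a \emph{subsequence} $\{n_j\}$, and different layers may select different subsequences. A recovery sequence, however, must be defined for \emph{every} $n$. The paper devotes an entire step (Step 3 of the $\limsup$ proof, with parts 3a--3c) to this: given a near-optimal subsequence, one rescales by $\alpha = n_j/n$ to produce a competitor on the scale-$n$ lattice, then uses the energy scaling of Lemma \ref{lem:indep} together with the energy control (\ref{eq:energy_control}) to show that the rescaled competitor, after translation, still converges to the correct layer profile with the correct limiting energy. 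This rescaling and translation argument is not automatic and is a genuine part of the proof that your outline omits.

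\textbf{The cutting mechanism is not a direct application of Proposition \ref{prop:twell}.} You propose to invoke Proposition \ref{prop:twell} directly to find rigid lines and then replace $w_n^{(l)}$ by an affine map on one side. But Proposition \ref{prop:twell} requires as input a \emph{one-well} energy density $\phi$ satisfying (\ref{eq:majority_phase}), not the two-well Hamiltonian $h$. The paper first has to construct such a $\phi$ from $h$ (Step 2 of the $\limsup$ proof, equation (\ref{1WE})) and verify the key bound $|\nabla_n \phi_u^{ij}| \leq C n h_u^{ij}$; only then can the cutting mechanism of Proposition \ref{prop:reduc} be invoked. Moreover, Proposition \ref{prop:reduc} itself is substantially more than ``cut along a rigid line'': it builds a dyadically refining perturbed grid $G_P^n$ via repeated applications of Proposition \ref{prop:twell}, interpolates on that grid, projects back to the original lattice $\Omega_n$, and glues via a cut-off, all while tracking that admissibility (non-interpenetration) and the energy bound $H_n(\tilde u_n) \leq C H_n(u_n)$ are preserved. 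Your final paragraph correctly flags that this is where the work lies, but the steps you list are precisely the content of Proposition \ref{prop:reduc}, which should be cited as the tool rather than rederived.

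A smaller point on the $\liminf$: your localized parallelograms $P_l$ have arbitrary dimensions $(d_l, l_l)$, while the layer energies $C_\pm$, $B_\pm$ are defined on the fixed domain $\Omega_{4,1}^{\pm}$. To conclude that the localized $\liminf$ is bounded below by the correct constant you need Lemma \ref{lem:indep} (independence of $m_1$, linearity in $m_2$), as the paper explicitly invokes.
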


Similarly as in \cite{BC07}, \cite{CS06} and \cite{KLR14} the proof of Theorem \ref{prop:surface} is based on a combination of the rigidity result of Proposition  \ref{prop:twell} together with a special cutting procedure. Heading for this, we begin by recalling some properties of the energy in Section  \ref{sec:prop_energy} and then carry out the proof of the $\Gamma$-limit in Section  \ref{sec:Gamma}.

\subsection{Properties of the energy and auxiliary results}
\label{sec:prop_energy}

Before addressing the proof of Theorem \ref{prop:surface}, we discuss central properties of the energy and derive auxiliary results. We begin by considering the energy densities from (\ref{eq:layer_energy1}). For notational convenience we limit ourselves to the case of interior layer energies, the situation for boundary energies is analogous. We start by introducing restricted versions of the internal layer energies from Definition \ref{defi:boundarylayers}:

\begin{defi}
\label{eq:intfaceen}
Let $m_1,m_2 \in \mR\setminus \{0\}$ and let $V_1,V_2 \in K$ be two rank-one connected matrices. Then we set
\begin{equation}
\label{eq:layer_energy2}
\begin{split}
C_{\pm}(V_1, V_2, m_1, m_2)&:= \inf \{  \liminf\limits_{n  \rightarrow \infty}\sum\limits_{\substack{(i,j)\in n\Omega_{m_1,m_2}^{\pm}}}  \frac{1}{n} h_{v_{n}}^{i,j}:\, v_{n}\in\mathcal{A}_{n\Omega_{m_1,m_2}^{\pm}},\\
&\quad  \quad \quad \quad v_{n}  \rightarrow v_{V_1,V_2}^{\pm} \mbox{ in } L^1(\Omega^{\pm}_{m_1,m_2})\}.
\end{split}
\end{equation}
\end{defi}

Analogous definitions hold for the boundary layer energies. We claim that the energies $C_\pm (V_1,V_2,m_1,m_2)$ do not depend on the dimension $m_1$ and are linear in the $m_2$ dimension.

\begin{lem}
\label{lem:indep}
Let $m_1,m_2 \in \mR \setminus \{0\}$ and let either $V_1 = U_0$, $V_2 = Q U_1$ or $V_1 = Q U_1$, $V_2 = U_0$, with the matrix $Q$ from \rf{eq:ROC}.
Let $C_{\pm}(V_1, V_2, m_1, m_2)$ be as in Definition \ref{defi:boundarylayers}. Then there exist constants $C_\pm$ 
depending only on the normals $(\pm 1,1)$ such that
 \begin{equation}
\label{eq:layer_energy3}
C_{\pm}(V_1, V_2, m_1, m_2) = C_{\pm}(V_1,V_2,1,1)m_2 = C_\pm m_2.
\end{equation}
\end{lem}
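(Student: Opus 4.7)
The lemma contains two claims: (a) $C_{\pm}(V_{1},V_{2},m_{1},m_{2})$ is independent of $m_{1}$, and (b) it is linear in $m_{2}$. I treat the ``$+$'' case; the ``$-$'' case is symmetric after replacing the normal $(1,1)$ by $(1,-1)$. The starting observation is the clean characterisation
\[
\Omega_{m_{1},m_{2}}^{+}=\{x\in\mathbb{R}^{2}:|x_{1}+x_{2}|\leq m_{1},\;|x_{2}|\leq m_{2}\},
\]
which yields $\Omega_{m_{1},m_{2}}^{+}\subset\Omega_{m_{1}',m_{2}}^{+}$ for $m_{1}\leq m_{1}'$, and exhibits $\Omega_{m_{1},m_{2}+m_{2}'}^{+}$, up to the null set $\{x_{2}=m_{2}-m_{2}'\}$, as the disjoint union of a translate of $\Omega_{m_{1},m_{2}}^{+}$ by $(m_{2}',-m_{2}')$ and a translate of $\Omega_{m_{1},m_{2}'}^{+}$ by $(-m_{2},m_{2})$. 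Both translation vectors are parallel to $(1,-1)$, so that the jump line $x_{1}+x_{2}=0$ is preserved; because the rank-one connection $V_{1}-V_{2}=a\otimes(1,1)$ makes $v_{V_{1},V_{2}}^{+}$ globally continuous, the translated limits differ from $v_{V_{1},V_{2}}^{+}$ only by an additive constant.

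For independence of $m_{1}$, the inequality $C_{+}(V_{1},V_{2},m_{1},m_{2})\leq C_{+}(V_{1},V_{2},m_{1}',m_{2})$ (for $m_{1}\leq m_{1}'$) is immediate by restricting a near-infimal sequence on $\Omega_{m_{1}',m_{2}}^{+}$ to the smaller parallelogram: the discrete energy is a non-negative sum that only loses contributions under restriction, and $L^{1}$-convergence passes to subdomains. For the reverse direction I would extend a near-infimal sequence $v_{n}$ on $\Omega_{m_{1},m_{2}}^{+}$ to $\Omega_{m_{1}',m_{2}}^{+}$ by setting $\tilde v_{n}(x)=V_{1}x+c_{-}^{n}$ on $\{-m_{1}'\leq x_{1}+x_{2}<-m_{1}\}$ and $\tilde v_{n}(x)=V_{2}x+c_{+}^{n}$ on $\{m_{1}<x_{1}+x_{2}\leq m_{1}'\}$, with additive constants $c_{\pm}^{n}\in\mathbb{R}^{2}$ chosen so that the deformation is globally continuous. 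Since $v_{n}$ is already $L^{1}$-close to the corresponding affine map near each boundary line $x_{1}+x_{2}=\pm m_{1}$, a transition strip of vanishing width suffices for the matching and carries $o(1)$ energy; the affine extensions themselves have gradient in $K$ and contribute no bulk energy.

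For linearity in $m_{2}$, write $C(m_{2}):=C_{+}(V_{1},V_{2},m_{1},m_{2})$, now independent of $m_{1}$. The geometric partition above yields superadditivity $C(m_{2}+m_{2}')\geq C(m_{2})+C(m_{2}')$ by restricting a near-infimal sequence on $\Omega_{m_{1},m_{2}+m_{2}'}^{+}$ to each of the two sub-parallelograms: the discrete energy splits as a sum over disjoint lattice regions, and each piece is an admissible competitor for $C(m_{2})$ and $C(m_{2}')$ respectively. Dually, subadditivity $C(m_{2}+m_{2}')\leq C(m_{2})+C(m_{2}')$ is obtained by gluing near-infimal sequences on the two translated sub-parallelograms in a thin strip around the shared horizontal slice $x_{2}=m_{2}-m_{2}'$; global continuity of $v_{V_{1},V_{2}}^{+}$ together with the $L^{1}$-closeness of both sequences to this limit ensures that a diagonal interpolation of width $\delta_{n}\to 0$ produces an admissible competitor of total energy $C(m_{2})+C(m_{2}')+o(1)$. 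Combined with the monotonicity $C(m_{2})\leq C(m_{2}')$ for $m_{2}\leq m_{2}'$ (restriction once more) and $C(0)=0$, additivity forces $C(m_{2})=C(1)\cdot m_{2}$, so $C_{+}:=C(1)$ is the desired constant.

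The principal technical obstacle is the gluing step underpinning subadditivity. The two translated recovery sequences must be joined across a slice that need not be aligned with the lattice grid, while preserving the non-interpenetration constraint $\tilde v_{n}\in\mathcal{A}_{n\Omega_{m_{1},m_{2}+m_{2}'}^{+}}$ and producing only $o(1)$ extra energy. Both the choice of translations parallel to the interface direction (so that the jump lines of the two pieces match exactly along $x_{1}+x_{2}=0$) and the quantitative use of $L^{1}$-closeness to $v_{V_{1},V_{2}}^{+}$ (which controls the distance of the interpolant's gradient from $K$ inside a strip of width $\delta_{n}\to 0$) are essential. This is precisely the discrete cut-and-paste mechanism that reappears, on a smaller scale, in the construction of the $\Gamma$-$\limsup$ recovery sequence in Section~\ref{sec:Gamma}.
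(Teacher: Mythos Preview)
Your overall strategy differs from the paper's in a crucial way, and the difference is not merely cosmetic: the paper's route avoids exactly the step you identify as the ``principal technical obstacle,'' while your route cannot be closed without it.

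The paper never glues. Instead it combines three soft facts: monotonicity of $C_{\pm}(V_{1},V_{2},m_{1},m_{2})$ in $m_{1}$ (which you also have), a \emph{scaling homogeneity}
\[
C_{\pm}(V_{1},V_{2},\alpha m_{1},\alpha m_{2})=\alpha\,C_{\pm}(V_{1},V_{2},m_{1},m_{2}),
\]
obtained by rescaling the lattice via $u_{\bar n}(i,j):=\alpha^{-1}v_{n}(\alpha i,\alpha j)$ with $\bar n=[\alpha n]$, and an \emph{averaging bound}
\[
C_{\pm}\Bigl(V_{1},V_{2},m_{1},\tfrac{m_{2}}{m}\Bigr)\le \tfrac{1}{m}\,C_{\pm}(V_{1},V_{2},m_{1},m_{2}),
\]
obtained by partitioning $\Omega^{\pm}_{m_{1},m_{2}}$ into $m$ translates of $\Omega^{\pm}_{m_{1},m_{2}/m}$ and keeping the best one. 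Chaining these,
\[
\tfrac{1}{m}C_{\pm}(m_{1},m_{2})=C_{\pm}\bigl(\tfrac{m_{1}}{m},\tfrac{m_{2}}{m}\bigr)\le C_{\pm}\bigl(m_{1},\tfrac{m_{2}}{m}\bigr)\le \tfrac{1}{m}C_{\pm}(m_{1},m_{2}),
\]
forces equality throughout, giving both independence of $m_{1}$ and linearity in $m_{2}$ in one stroke. No extension, no interpolation, no admissibility issues.

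Your argument, by contrast, needs two separate gluings (the reverse inequality in $m_{1}$, and subadditivity in $m_{2}$), and for both you defer to ``the discrete cut-and-paste mechanism \dots in Section~\ref{sec:Gamma}.'' This deferral is circular. The cutting result you have in mind is Proposition~\ref{prop:reduc}, whose hypothesis is a \emph{one-well} smallness bound $\sum n^{-2}(\phi^{ij}_{u_{n}}+|\nabla_{n}\phi^{ij}_{u_{n}}|)\le\eta$ in the strip where you want to cut. From a near-infimal sequence for $C_{\pm}(m_{1},m_{2})$ you only have $L^{1}$-convergence to $v^{\pm}_{V_{1},V_{2}}$ and a \emph{bounded} total two-well energy; neither yields smallness of the one-well energy in a fixed boundary strip. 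In the paper's $\Gamma$-$\limsup$ proof this smallness (equation~\eqref{eq:energy_control}) is obtained precisely by invoking Lemma~\ref{lem:indep}: independence of $m_{1}$ is what forces the energy outside a core region to be negligible. So you are using the conclusion of the lemma to supply the hypothesis of the tool you want to prove it with.

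Separately, the sentence ``since $v_{n}$ is already $L^{1}$-close to the corresponding affine map\dots a transition strip of vanishing width suffices for the matching and carries $o(1)$ energy'' is not correct as written: $L^{1}$-closeness gives no control on gradients, and a naive interpolation across a strip of width $\delta_{n}$ produces gradient errors of order $\delta_{n}^{-1}$ times the pointwise mismatch, with no mechanism to keep $\nabla\tilde v_{n}$ near $K$ or to preserve the non-interpenetration constraint. This is exactly why a result as strong as Proposition~\ref{prop:reduc} (built on the two-well rigidity of Proposition~\ref{prop:twell}) is needed for the genuine cut-and-paste later on; it is not a step one can wave through here.
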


\begin{proof}
The proof follows from averaging and scaling as in \cite{CS06}, Lemma 3.2 (due to our discrete set-up we however make small errors for each fixed $n\in \N$, these vanish in the limit $n\rightarrow \infty$). We only present the argument for $C_{-}(V_1,V_2,m_1,m_2)$ (the one for $C_{+}(V_1,V_2,m_1,m_2)$ is analogous) and only argue that $C_{-}(V_1,V_2,m_1,m_2)$ is independent of $m_1$, the other dependences being more direct. We begin by noticing that, by definition, $C_-(V_1,V_2,m_1,m_2)$ is an increasing function in $m_1$. We claim that moreover
\begin{align*}
C_-(V_1, V_2, \alpha m_1, \alpha m_2)  
= \alpha C_-(V_1,V_2, m_1, m_2).
\end{align*}
Indeed, assuming that $n$ is sufficiently large and setting $\bar{n}:= [\alpha n]$, we have
\begin{align*}
&\sum\limits_{\substack{(i,j)\in n\Omega_{\alpha m_1,\alpha m_2}^-
}}  \frac{ 1}{n} h_{ v_{n}}^{ij} =
 \alpha \sum\limits_{\substack{(i,j)\in n\Omega_{m_1\alpha,m_2\alpha}^-
}}  \frac{ 1}{(\alpha n)} h_{v_{n}}^{ij} \\
& =
\alpha \sum\limits_{\substack{(i,j)\in \bar{n}\Omega_{m_1,m_2}^-
}}  \frac{ 1}{\bar{n}}h_{u_{\bar{n}}}^{ij} + O(1/\bar{n}),
\end{align*}
where $u_{\bar{n}}(i,j) := \frac{1}{\alpha}v_{\bar{n}}(\alpha i, \alpha j)$. Fixing $\alpha$ and taking the $\liminf$ as $n\rightarrow \infty$ yields the claim.\\ 
Next, we show that
\begin{align*}
C_-(V_1, V_2, m_1, \frac{m_2}{m}) \leq \frac{1}{m} C_-(V_1,V_2,m_1,m_2).
\end{align*}
This follows from averaging and translating. More precisely, we have
\begin{align*}
\sum\limits_{k=0}^{m-1}\left( \sum\limits_{\substack{(i,j)\in n\Omega_{m_1,m_2/m}^-
}}  \frac{ 1}{n} h_{v_{n}^{k}}^{ij} \right) = 
\sum\limits_{\substack{(i,j)\in n \Omega_{m_1,m_2}^-
}}  \frac{ 1}{n} h_{v_{n}^{ij}} + O(m/n),
\end{align*}
where 
\begin{align*}
( v_{ n}^{k})^{i j}:= v_n(i/n + k m/n, j/n + k m/n).
\end{align*}
Therefore, there exists $k_0\in \{0,\dots,m-1\}$ such that
\begin{align*}
\sum\limits_{\substack{(i,j)\in n\Omega_{m_1,m_2/m}^-
}}  \frac{ 1}{n} h_{v_{n}^{k}}^{ij} = \frac{1}{m}
\sum\limits_{\substack{(i,j)\in n\Omega_{m_1,m_2}^-
}}  \frac{ 1}{n} h_{v_{n}^{ij}} + O(m/n).
\end{align*}
Again the claim follows by taking the $\liminf$ as $n\rightarrow \infty$.
Hence, for all $m\in \mR\setminus \{0\}$, we deduce that
\begin{equation}
\label{eq:average}
\begin{split}
\frac{1}{m} C_-(V_1,V_2,m_1,m_2) &= C_-(V_1, V_2, \frac{m_1}{m}, \frac{m_2}{m})
\leq C_-(V_1,V_2, m_1, \frac{m_2}{m})\\
& \leq \frac{1}{m} C_-(V_1, V_2, m_1, m_2).
\end{split}
\end{equation}
Here the first inequality follows from monotonicity in $m_1$ and the second one from averaging. 
Thus, equality holds in all estimates in (\ref{eq:average}). In particular, for all $m\in \mR\setminus \{0\}$
\begin{align*}
C_-(V_1,V_2, \frac{m_1}{m}, m_2) = C_-(V_1, V_2, m_1, m_2),
\end{align*}
which yields the independence of $C_-(V_1,V_2,m_1,m_2)$ of $m_1$.
\end{proof}

As a consequence, the limiting energies only depend on the corresponding normal direction to the interface by means of the constants $C_\pm$, but \emph{do not} depend on the extension $m_1$ of the domain in the direction $(\pm1,\,1)$.
Similarly as Proposition \ref{prop:rig2}, this already partially confirms the expectation that the continuum energy will be a ``line energy''. 
Hence, after a normalization step, it is always possible to assume that the given layer energy is defined in a unit parallelogram.
Relying on Proposition \ref{prop:twell} from the previous section, we also obtain the following \emph{vertical cutting mechanism}:

\begin{prop}[Vertical cutting]
\label{prop:reduc}
Let $d,l> 0$ and let $u_n\in\mathcal{A}_{n \Omega_{2d,2l}^{-}}$ be piecewise affine on the grid $\Omega_n$. Suppose that 
\be
\label{eq:energy}
\sum\limits_{\substack{(i,j)\in n\Omega_{2d,2l}^-}}   \frac{1}{n^2} (\phi_{u_n}^{ij}  +  |\nabla_n \phi_{u_n}^{i,\,j}|)\leq \eta < \infty,
\ee
where $\phi\in C^{0,1}((\mR^2)^4, \mR)$ and 
\bes
\phi^{ij}_{u_n}:= \phi(\p_1 u_n^{ij}, \p_2 u_n^{ij}, \p_1 u_n^{i-1,j}, \p_{2} u_n^{i-1,j})
\ees
denotes a one-well energy function with well given by one of the sets $SO(2)U_i,\,i=1,2$, and quadratic growth, i.e.
\be
\label{eq:growthI}
c_1\dist^2(\nabla u^{ij}_n, SO(2)U_{i}) \leq \phi^{ij}_{u_n} \leq c_2\dist^2(\nabla u^{ij}_n, SO(2)U_{i}).
\ee
Then, there exist a constant $C>0$ and a modified deformation $\tilde{u}_n\in \mathcal{A}_{n\Omega_{2d,l/2}^{-}}$ such that along a subsequence
\begin{itemize}
\item[(a)]
$
H_{n}(\tilde{u}^{i,j}_n)  \leq C H_{n}(u^{i,j}_n)   \mbox{ for } (i,j)\in n\Omega_{2d, l/2}^-,
$
\item[(b)] for $x\in \Omega_{2d,l/2}^-$ it holds
\begin{align*}
\nabla \tilde{u}_n(x) &= U_0 \mbox{ for } x\cdot \begin{pmatrix} -1 \\ 1 \end{pmatrix}\geq \frac{3}{8}n d \mbox{ and } \\
\tilde{u}_n(x) &= u_n(x) \mbox{ for }  x\cdot \begin{pmatrix}- 1 \\ 1 \end{pmatrix}\leq \frac{1}{4}n d.
\end{align*}
\end{itemize}
\end{prop}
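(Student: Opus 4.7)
My plan is to adapt the Conti--Schweizer cutting construction to the discrete setting, with Proposition~\ref{prop:twell} playing the role of their $L^\infty$ rigidity, and with the ``perturbed grid'' referred to in the statement being the discrete device needed to interpolate between the original deformation and an affine one along a single line. The setting $\Omega_{2d,2l}^{-}$ is a parallelogram with one pair of sides parallel to $(-1,1)$ and another parallel to $(1,1)$, so the natural foliation is by segments orthogonal to $(-1,1)$, i.e.\ by level sets of $x\cdot(-1,1)$. I will slice the intermediate strip $\{x\in\Omega_{2d,2l}^{-}:\tfrac14 nd\le x\cdot(-1,1)\le \tfrac38 nd\}$ into $O(1)$ parallelogram substrips $S_k$ of thickness comparable to $d$ (but much smaller than $l$, say $\sim d/M$ for a large $M$).

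Since $\sum_k \bigl(\phi_{u_n}$-energy on $S_k\bigr)\le \eta$, an averaging argument selects a strip $S_{k_0}$ on which the one-well quantity in~\eqref{eq:majority_phase} is at most $\eta/M$; by Lemma~\ref{lem:indep}-type scaling this makes the smallness hypothesis of Proposition~\ref{prop:twell} (with majority phase $SO(2)U_0$) satisfied on a family of overlapping balls covering $S_{k_0}$. Applying the two-well rigidity to pairs $(x_0,y_0)$ ranging over $S_{k_0}$, one obtains a rotation $Q\in SO(2)$ and vector $b\in\mR^2$ with $A(x):=QU_0 x+b$ such that, for most pairs of points $(x,y)$ in $S_{k_0}$, the segment $[x,y]$ is \emph{rigid} in the sense of Remark~\ref{rmk:rigid} relative to $A$, together with $\|u_n-A\|_{L^\infty(S_{k_0}\setminus \Omega_b)}\le c\eta^{1/2}$ from the Friesecke--James--M\"uller step used inside the proof of Proposition~\ref{prop:twell}. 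In particular, on a sub-strip $\tilde S\subset S_{k_0}$ of thickness $\gtrsim d/M$ the trace of $u_n$ on two parallel lines close to the top and bottom of $\tilde S$ is uniformly $O(\eta^{1/2})$-close to the affine map $A$.

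Now define $\tilde u_n$ as follows. After a global rigid motion (which does not change $H_n$) we may take $Q=I$ and $b=0$, so that the target affine map is $A(x)=U_0 x$. Below $\tilde S$, set $\tilde u_n:=u_n$ (this already gives part~(b) of the conclusion in the region $x\cdot(-1,1)\le \tfrac14 nd$). Above $\tilde S$, set $\tilde u_n(x):=U_0 x$, giving $\nabla\tilde u_n=U_0$ there, which is the other half of~(b). Inside $\tilde S$ we interpolate. Since the two prescribed pieces disagree on the top/bottom edges of $\tilde S$ by at most $c\eta^{1/2}$, a piecewise affine interpolation on a \emph{perturbed grid} (produced by the algorithm sketched in Appendix~\ref{app:alg}, whose role is precisely to align the mesh with the cutting segment while keeping every triangle non-degenerate so that admissibility~\eqref{eq:non} is preserved) yields gradients of the form $U_0+O(\eta^{1/2}(M/d))$. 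Choosing $M$ a fixed constant and using the quadratic upper bound~\eqref{e:lb} gives a contribution to $H_n(\tilde u_n)$ inside $\tilde S$ of order $\eta$, which in turn is controlled by the original Hamiltonian $H_n(u_n)$ through~\eqref{eq:growthI}. Combined with $H_n(\tilde u_n)=H_n(u_n)$ on the lower part, this proves~(a).

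The main obstacle I anticipate is the interpolation step: one must change the lattice deformation across a single discrete line from the possibly oscillatory $u_n$ to the exactly affine $U_0 x$ without violating the non-interpenetration determinant condition in~\eqref{eq:non}, and while incurring only $O(\eta)$ extra energy. This is exactly where the perturbed grid of Appendix~\ref{app:alg} is needed: a naive replacement of $u_n$ by $U_0 x$ on one side of a grid line produces triangles with gradient jumps of size $\eta^{1/2}$ distributed on a layer of width $1/n$, which would contribute $O(1)$ rather than $O(\eta)$ to $n H_n$. By instead bending the cut so that it follows rigid segments guaranteed by Proposition~\ref{prop:twell}, the jump is absorbed into a layer of macroscopic thickness $\sim d/M$, keeping the Hamiltonian comparable to the original, as required.
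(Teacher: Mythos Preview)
Your outline captures the Conti--Schweizer philosophy, but the implementation has a quantitative gap that makes conclusion~(a) fail. You propose a \emph{single-scale} interpolation: on a strip $\tilde S$ of width $\sim d/M$ you patch $u_n$ to $U_0x$ using that $\|u_n-A\|_{L^\infty}\le c\eta^{1/2}$, obtaining gradients $U_0+O(\eta^{1/2}M/d)$ and hence a contribution $O(\eta)$ to $H_n(\tilde u_n)$. You then assert this is bounded by $CH_n(u_n)$ via~\eqref{eq:growthI}. That last step is wrong: \eqref{eq:growthI} compares $\phi$ to the \emph{one-well} distance, so it only gives $H_n(u_n)\lesssim\eta$, not $\eta\lesssim H_n(u_n)$. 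In the intended application (Step~1 of the $\Gamma$--$\limsup$ proof) one has $\eta\sim 1$ while $H_n(u_n)$ on the cutting strip is $O(1/n)$; your construction would produce $H_n(\tilde u_n)\sim 1$, i.e.\ $H_n^1(\tilde u_n)\sim n$, destroying the recovery sequence.

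The paper avoids this by a genuinely \emph{multi-scale} construction. One first builds a dyadically refining reference grid $G_R^n$ with levels $h_k=2^{-k}h_1$ down to the lattice scale $h_{k_0}\sim 1/n$, and then perturbs its vertices (this is what the algorithm of Appendix~\ref{app:alg} is for) so that every edge becomes a rigid segment in the sense of Proposition~\ref{prop:twell}. The crucial point you are missing is that the error in~\eqref{eq:closew} is measured by $\mu=r^{-2}\sum n^{-2}\dist(\nabla u,K)$, the \emph{two-well} distance; this yields on each triangle $T_m$ of scale $h_k$ the estimate $|\nabla v(T_m)-Q_mU_0|\le Ch_k^{-1}\|\dist(\nabla u,K)\|_{L^2(T_m)}$, and since $|T_m|\sim h_k^2$ the squared errors sum to $C\int\dist^2(\nabla u,K)\le CH_n(u_n)$ rather than $C\eta$. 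Equally essential is the $|\nabla_n\phi|$ term in hypothesis~\eqref{eq:energy}, which you never use: combined with a slicing argument it gives an $L^\infty$ bound for $\phi$ along a good diagonal line, and then a discrete Poincar\'e inequality produces the scale-correct estimate $\sum_{S_c}n^{-2}\phi_{u_n}^{ij}\le c\eta h_k^2$ needed to verify the smallness hypothesis of Proposition~\ref{prop:twell} at every dyadic level. Finally, the paper does not interpolate between $u_n$ and $U_0x$ directly; it interpolates between $u_n$ and the multi-scale piecewise affine function $\tilde v_n$, which agree to order $\dist(\nabla u,K)$ near the finest layer, so the cutoff step also costs only $CH_n(u_n)$.
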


\begin{remark}
The previous ``cutting result'' will play a major role in our $\Gamma$-convergence proof (in the construction of the recovery sequence). We emphasize that for our proof it is necessary to pass from the larger domain $\Omega_{2d,2l}$ to the smaller set $\Omega_{2d,l/2}$ in the formulation of Proposition \ref{prop:reduc}. However, this does not pose difficulties in the proof of the $\Gamma$-convergence result, as we can exploit the scaling behavior of the boundary and layer energies which was formulated in Lemma \ref{lem:indep}.
\end{remark}

\begin{proof} 
The proof of the cutting lemma follows along the lines of Proposition 5.2 in \cite{CS06} and \cite{CS06c}. During the procedure in which we modify $u_n$ to $\tilde{u}_n$, we however have to ensure admissibility. This corresponds to two requirements: Firstly, we have to preserve impenetrability. Secondly we also have to make sure that the final function $\tilde{u}_n$ is still defined on the original lattice $(\Omega_n, \Delta^{n,\pm}_{i,j}$.\\

\emph{Step 1: Energy estimates.} There exist many values of $c_0\in [\frac{1}{4}n,\frac{3}{8}n]  \cap \Z$ such that for all $\delta \in (0,1)$
\begin{align*}
\frac{1}{\delta} \sum\limits_{\substack{(i,j)\in n\Omega_{2d,2l}^-,\\  c_0 - [\delta n] \leq i- j \leq c_0}}  \frac{1}{n^2}(\phi_{u_n}^{i,\,j}  + |\nabla_n \phi_{u_n}^{i,\,j}|) \leq c \eta.
\end{align*}
This follows by a covering argument as in \cite{CS06}. \\

\emph{Step 2: Construction of the reference grid $G_R^n$.} The construction of the grid $G_R^n$ is similar as in \cite{CS06} but with respect to the direction $(1,-1)$. It refines dyadically with vertical distances which we denote by $h_k$. However, instead of refining up to \emph{infinite} order, we limit ourselves to \emph{finite} scales such that $h_k$ is larger or equal to $\frac{1}{n}$. We denote the finest scale by $h_{k_0}^n$, and assume that $h_{k_0}^n \in [\frac{1}{n}, \frac{100}{n}]$.\\
More precisely, we define $l_1:= 2 \sqrt{2}[l]$ and $h_1:= [2d]$. Furthermore, we then set $l_k:= 2^{-k}l_1$ and $h_k:= 2^{-k}h_1$ as long as $h_k\geq h_{k_0}^n$ and stop the refining procedure after that. Then we divide the line segment in $\Omega_{2d,2l}^-$ with coordinates $i_k-j_k:=c_0 - [h_k n]$ into equi-sized intervals which are arranged symmetrically with respect to the line $j=0$. The boundaries of the intervals constitute the vertices of the grid $G^n_R$. The grid if formed by connecting the vertices along neighboring lines. We remark that the degeneracy of the triangles depends on the ratio $l/d$.\\

\emph{Step 3: Energy scaling.} In this step, we prove the optimal scaling of the energy in the respective triangles. This follows from the discrete analoga of the arguments in \cite{CS06}. We apply Step 1 with $\delta = h_k$:
\begin{align}
\label{eq:grad}
 \sum\limits_{\substack{(i,j)\in n\Omega_{2d,2l}^-,\\  c_0 - [h_k n] \leq i- j \leq c_0}}  \frac{1}{n^2}(\phi_{u_n}^{i,\,j}  + |\nabla_n \phi_{u_n}^{i,\,j}  |) \leq c \eta h_k.
\end{align}
Therefore there exists a parameter $c_k \in [c_0 - [h_k n], c_0]\cap \Z$ such that
\begin{align*}
\sum\limits_{\substack{(i,j)\in n\Omega_{2d,2l}^-,\\  i-j =c_k }} \frac{1}{n}(\phi_{u_n}^{i,j}+ |\nabla_n \phi_{u_n}^{i,\,j}  |) \leq c \eta.
\end{align*}
From the previous two estimates we infer that:
\begin{itemize}
\item[(a)] 
We have 
\bes
\sum\limits_{\substack{(i,j)\in n\Omega_{2d,2l}^-,\\  i- j = c_k}}  \frac{1}{n}|\nabla_n \phi_{u_n}^{i,\,j}|\le c\eta.
\ees
Spelling this out and considering the diagonal derivatives, we in particular obtain
\bes
\sum\limits_{\substack{(i,j)\in n\Omega_{2d,2l}^-,\\  i- j = c_k}} | \phi_{u_n}^{i+1,\,j} -\phi_{u_n}^{i,\,j}| \leq c \eta.
\ees
\item[(b)] There exists $j_k\in [-n,n]\cap \Z$ such that for the point $(i_k,j_k)\in n\Omega_{2d,2l}^-$ with $i_k - j_k =c_k$ it holds 
$$\phi_{u_n}^{i_k,j_k}\leq c \eta. $$
\end{itemize}
Thus, combining these two points by writing out a telescope sum, we observe that on $i-j=c_k$
\begin{align*}
|\phi_{u_n}^{i,\,j} - \phi_{u_n}^{i_k,\,j_k}| \leq \sum\limits_{\substack{(i,j)\in n\Omega_{2d,2l}^-,\\  i- j = c_k}} |\phi_{u_n}^{i+1,\,j}- \phi_{u_n}^{i,\,j}  | \leq c \eta .
\end{align*}
Due to the bound on $\phi_{u_n}^{i_k,\,j_k}$ from (b), this implies an $L^{\infty}$ estimate along the whole strip $i-j=c_k$, $|j| \leq n$:
\begin{align}
\label{eq:Linfty}
|\phi_{u_n}^{i,\,j}| \leq c\eta.
\end{align}
Defining $S_c:=\{(i,j)| c_0 - [h_k n]\leq i-j \leq c_0, \ [d] \leq i+j \leq [d]+[h_k n]\}$ for any $d\in\{-n,\dots, n-[h_k n]\}$, and invoking Poincar\'e's inequality in combination with (\ref{eq:grad}) and (\ref{eq:Linfty}) then yields
\be
\label{eq:Poinc}
\sum\limits_{(i,j)\in S_c} \frac{1}{n^2} \phi_{u_n}^{i,\,j}\leq 2 \left( h_k^2 \max_{\substack{(i,j)\in n\Omega_{2d,2l}^-,\\  i- j = c_k,\\ | j| \leq n}}|\phi_{u_n}^{i,\,j}| + h_k \sum\limits_{( i,j)\in S_c} \frac{1}{n^2} |\nabla_n \phi_{u_n}^{i,\,j}  |  \right) \leq c \eta h_k^2.
\ee

\emph{Step 4: Construction of $\tilde{u}_n$.} \\
\emph{Step 4a: Construction of the perturbed grid, $G_P^n$.} This follows as in \cite{CS06}. As in the construction of the reference grid, we however only refine up to $h_k \sim n^{-1}$, and recall that this finest scale is denoted by $h_{k_0}^n$. \\
We recall the precise construction from \cite{CS06}. Here the perturbed grid $G_P^n$ is obtained from the grid $G_R^n$ by perturbations along rigid directions. We seek to apply Proposition \ref{prop:twell} so that all the resulting new grid edges are rigid (c.f. Remark \ref{rmk:rigid} in Section \ref{sec:twell}). In order to remain within $\Omega_{2d,2l}^{-}$ within this procedure, we restrict our construction to the subgrid which is fully contained in $\Omega_{2d,l}^{-}$. We begin by defining $\alpha= \frac{1}{10}c(d,l)$, where $c(d,l)>0$ should be thought of as a small constant dealing with the degeneracy of the grid. Then we enumerate the grid vertices of $G_R^n$ and denote them by $v_m$. We apply Proposition \ref{prop:twell} in a ball $B_m:=B_{\alpha h_k}(v_m)$, where $v_m$ is a vertex on the layer $i_k-j_k = c_0 - [n h_k]$. Then for two neighboring balls $B_m$, $B_{m'}$ there are many rigid points $(w_m, w_{m'})$ according to Proposition \ref{prop:twell}. Following \cite{CS06}, we now describe our choice of the new grid vertices by iteratively defining \emph{possible choices at step $m$}:
\begin{itemize}
\item \emph{Possible choices at step $0$}: These are all $w_m\in B_m$.
\item \emph{Possible choices at step $m$}: These are all points $w_m \in B_m$ such that $w_m$ forms a rigid pair with many points of all neighboring balls. 
\item \emph{Possible choices at step $m+1$}: These are all possible choices from step $m$ without those points $w \in B_{m'}$ with $m'>m$ such that $v_{m'}$ is a neighbor of the $v_m$ but $(w,w_m)$ is not rigid. 
\end{itemize}
As in \cite{CS06} we claim that this algorithm works (i.e. the set of possible choices in step $m$ always forms a set of positive measure) and yields a new set of vertices which defines our new grid $G_P^n$. We give a proof for this in Appendix \ref{app:alg}, c.f. Lemma \ref{lem:alg}.\\
As in \cite{CS06} interpolation on the resulting triangles leads to a new piecewise affine grid function, $v$, on the new grid $G_P^n$. Furthermore, as in \cite{CS06a}, Proposition \ref{prop:twell} and the bound (\ref{eq:Poinc}) further yield estimates of the type   
\begin{equation}
\label{eq:close_a}
\begin{split}
|\nabla v(T_m) -  Q_m U_0| &\leq C \frac{1}{h_k} \| \dist(\nabla u, K) \|_{L^2(T_m)},\\
|\nabla v(T_m)- Q_m U_0| &\leq C \sqrt{\eta},
\end{split}
\end{equation}
for some rotation $Q_m$ associated to each triangle $T_m$ of the grid $ G^n_P$ (which is spanned by neighboring vertices). Here we used (\ref{eq:Poinc}) and (\ref{eq:growthI}) to obtain the second estimate in (\ref{eq:close_a}) from the first one. The notation $\nabla v(T_m)$ refers to the gradient of $v$ in the interior of the triangle $T_m$ (we recall that $v$ is a piecewise affine function on the perturbed grid $G_P^n$).
Furthermore, the facts that two neighboring triangles $T_m, T_m'$ of $G^n_P$ share a common edge and that on both triangles $\nabla v$ has a controlled distance to the wells in $K$ (c.f. (\ref{eq:close_a})), imply that 
\begin{equation}
\label{eq:close_a1}
\begin{split}
|\nabla v(T_m)-  \nabla v(T_m')| &\leq C \frac{1}{h_k} \| \dist(\nabla u, K) \|_{L^2(T_m)},\\
| Q_m' U_0 - Q_m U_0| &\leq C \frac{1}{h_k} \| \dist(\nabla u, K) \|_{L^2(T_m)}.
\end{split}
\end{equation} 
We now modify $v$ into a function $\tilde{v}_n$ on the original discrete grid $\Omega_n$. To this end, we define $\tilde{v}_n$ as the interpolation of $v$ with respect to the grid $\Omega_n$ (for most of the triangles, the interpolated gradient $\nabla \tilde{v}_n$ will equal the original gradient $\nabla v$ as the triangles in $G_P^n$ are in general much larger than those in $\Omega_n$, due to the choice $h_{k_0}^n \sim n^{-1}$ and as $v$ is affine on these). This yields a function which is defined on
\begin{align*}
\Omega_{2d,l/2}\cap \conv\{(i/n,j/n): i-j\leq c_0 -[n h_{k_0}^n]\}.
\end{align*}
In the interpolation process we obtain new error terms at the interfaces of two triangles in $G_P^n$, since the grid $G_P^n$ does not match the original grid $\Omega_n$. However, for these new interpolations, we note that $\nabla \tilde{v}(\Delta_{ij}^{n,\pm}) \in \conv\limits_{l\in\{1,\dots,m\}}(\nabla v(T_l))$, where the index $l$ denotes all the involved neighboring triangles in $G_P^n$ (in particular the maximal number of involved triangles, $m$, is independent of $n$). But due to (\ref{eq:close_a1}) this error is controlled, e.g. in the case $\nabla \tilde{v}_n(\Delta_{ij}^{n,+}) = \lambda \nabla v(T_m) + (1-\lambda ) \nabla v(T_{m}')$, we have
\begin{align*}
\int\limits_{\Delta^{n,+}_{ij}} |\nabla \tilde{v}_n(\D_{ij}^{n,+}) - \lambda Q_m U_0 - (1-\lambda ) Q_m' U_0|^2 dx 
&\leq  4(\int\limits_{T_m} |\nabla v(T_m) - \lambda Q_m  U_0|^2 dx \\
&\quad+  \int\limits_{T_m'} |\nabla v(T_m') -  Q_m' U_0|^2 dx)\\
&  \leq C  \| \dist(\nabla u, K) \|_{L^2(T_m)}^2 \\
& \quad + C  \| \dist(\nabla u, K) \|_{L^2(T_m')}^2,
\end{align*}
where we have used (\ref{eq:close_a}).
Summing over all triangles hence yields that
\begin{align*}
H_n(\tilde{v}_n) \leq C H_n(u_n).
\end{align*}
Moreover, due to the second estimate in (\ref{eq:close_a}), we note that $v$ and similarly $\tilde{v}_n$ satisfy the non-interpenetration condition.\\

\emph{Step 4b: Estimates on the original grid $\Omega_n$ close to the line $i-j=c_0$.} We estimate the contributions of $\nabla u$ on the original grid $\Omega_n$ in the domain given by $\Omega':=\{(i,j): |j|\leq n, c_0 - [10 h_{k_0}^n n] \leq i -j \leq c_0\}$. In contrast to the argument in the previous steps we do not construct a perturbed grid but seek to obtain estimates on the closeness of $\nabla u_n^{ij}$ to $SO(2)U_0$ on each individual grid triangle $\Delta_{ij}^{n,\pm}$. 
For this we use the one-well rigidity theorem of \cite{FJM05} together with (\ref{eq:grad}).
In particular, on the scale $h_k \sim n^{-1}$ these immediately yield pointwise bounds and we infer that
\begin{equation}
\label{eq:hsmall}
\begin{split}
|\phi^{i,j}_{u_n}| \leq c \eta \mbox{ on each triangle } \Delta_{ij}^{n,\pm}\subset \Omega'.
\end{split}
\end{equation}
Therefore, there exist rotations $Q_{ij}$ with
\begin{equation}
\label{eq:close_b}
\begin{split}
|\nabla u_n(\Delta_{ij}^{n,\pm})- Q_{ij} U_0| &\leq C \sqrt{\eta} \mbox{ for all triangles } \Delta_{ij}^{n,\pm}\subset \Omega',\\
|Q_{kl}U_0- Q_{ij} U_0| &\leq C \sqrt{\eta} \mbox{ for neighboring triangles } \Delta_{ij}^{n,\pm}, \Delta_{kl}^{n,\pm}\subset \Omega',\\
\sum\limits_{(i,j)\in n\Omega'} \frac{1}{n^2}h_{u_n}^{ij} &\leq C\int\limits_{\Omega'}\dist(\nabla u, K)^2 dx,
\end{split}
\end{equation}
where the last line follows from the one-well rigidity result, the observation that $\dist(\nabla u, SO(2)U_0)\leq \dist(\nabla u, K)$ on $\Delta_{ij}^{n,\pm}\subset \Omega'$ and the two-growth behavior of $h^{ij}_{u_n}$ close to the energy wells.\\

\emph{Step 4c: Construction of the interpolation function.} Using the estimate from Steps 4a and 4b, we now construct an interpolation function $w_n: \Omega_{2d,l/2}^{-} \rightarrow \mR^2$ between $u_n$ and $\tilde{v}_n$:
\begin{align*}
w_n(x):= \gamma(n x) u_n(x) + (1-\gamma)(n x) \tilde{v}_n(x)
\end{align*}
where $\gamma$ is a smooth function with $\gamma(z) = 1$ for $z_1-z_2 \in [c_0/n - [2 h_{k_0}^n n]/n, c_0/n]$,  $\gamma(z) = 0$ for $z_1-z_2\leq  c_0/n - [9 h_{k_0}^n n]/n$. Here, for completeness, $u_n$ and $\tilde{v}_n$ are set to equal zero in the domains in which they have not yet been defined. We claim that the resulting function $w_n$ satisfies the following energy bound:
\begin{align}
\label{eq:interp}
H_n(w_n) \leq CH_n(u_n).
\end{align}
Indeed, for $i-j\geq c_0 -[2h_{k_0}^n n]$ and for $i-j\leq c_0 - [9 h_{k_0}^n n]$ this follows from the respective bounds for $\tilde{v}_n$ and $u_n$ which were stated in Steps 4a and 4b. It thus remains to argue that this is also true in the interpolation region $c_0 - [9 h_{k_0}^n n] \leq i+j \leq c_0 - [2 h_{k_0}^n n] $. To this end, we note that as $h_{k_0}^n \sim n^{-1}$, $\nabla \tilde{v}(\Delta_{ij}^{n,\pm}) \in \conv(\nabla u(\D_{kl}^{n,\pm}))$, where $\D_{kl}^{n,\pm}$ are neighboring triangles of $\D_{ij}^{n,\pm}$ (or triangles within a certain uniformly bounded distance from $\D_{ij}^{n,\pm}$). As a consequence, by the triangle inequality and the estimates (\ref{eq:close_a}), (\ref{eq:close_a1}), (\ref{eq:close_b}), we infer
\begin{align}
\label{eq:close}
|\nabla u_n(\Delta_{ij}^{n,\pm}) - \nabla \tilde{v}_n(\D_{ij}^{\pm})| \leq C \frac{1}{h_k} \sum\limits_{(kl) \in \mathcal{N}(i,j)} \| \dist(\nabla u_n, K) \|_{L^2(\D_{kl}^{\pm})}.
\end{align}
Assuming growth of order two for the energy density $h_n$ at infinity, setting
\begin{align*}
\Omega'':=\{(x_1,x_2): x_1-x_2\in[c_0/n-[9h_{k_0}^n n]/n,c_0/n], \ |x_2| \leq 1\},
\end{align*}
and using (\ref{eq:close}), we hence obtain
\begin{align*}
H_n(w_n) &= H_n(u_n + \gamma (\tilde{v}_n-u_n)) \\
& \leq CH_n(u_n)  + \sum\limits_{(i,j)\in n\Omega''} \frac{1}{n^2}|\nabla u_n^{ij} - \nabla \tilde{v}_n^{ij}|^2  \\
& \quad + C \sum\limits_{\substack{(i,j)\in n\Omega_{2d,l/2}^-,\\  c_0 - [9 h_{k_0}^n n] < i- j < c_0 - [2 h_{k_0}^n n]}} \frac{1}{n^2}|\nabla \gamma|^2|u^{ij}_n-\tilde{v}^{ij}_n|^2  \\
& \leq C  H_n(u_n) + \sum\limits_{(i,j)\in n\Omega''} \frac{1}{n^2}|\nabla u_n^{ij} - \nabla \tilde{v}_n^{ij}|^2 + \sum\limits_{\substack{(i,j)\in n\Omega_{2d,l/2}^-,\\  c_0 - [h_k n] < i+ j < c_0}}\frac{1}{n^2}n^2|u^{ij}_n-\tilde{v}^{ij}_n|^2 \\
&  \leq  C H_n(u_n)+ C\sum\limits_{\substack{(i,j)\in n\Omega_{2d,l/2}^-,\\  c_0 - [h_k n] < i- j < c_0}}\frac{1}{n^2}|\nabla u^{ij}_n-\nabla \tilde{v}^{ij}_n|^2\\
&  \leq  C  H_n(u_n) ,
\end{align*}
where we used (\ref{eq:close}) in passing from the third to the fourth line and Poincar\'e's inequality to estimate the term involving $|u_n-\tilde{v}_n|$. 
Moreover, due to the $L^{\infty}$ estimates in (\ref{eq:close_a}), (\ref{eq:close_a1}) and (\ref{eq:close_b}) the admissibility of $w_n$ is preserved.
Hence, setting $\tilde{u}_n=w_n$, provides the desired modification of $u_n$.
\end{proof}

An analogous cutting result holds for the limiting profiles $v^+_{V_1,V_2}$.\\

\subsection{Proof of the $\Gamma$-convergence result}
\label{sec:Gamma}

In this section we finally prove the $\Gamma$-convergence result of Theorem \ref{Th1}. Here the $\Gamma$-$\liminf$ inequality essentially follows directly from the definition of the limiting energy and the independence result of Lemma \ref{lem:indep}. The construction of the $\Gamma$-$\limsup$ inequality however is more involved (as in \cite{CS06a}, \cite{CS06}, \cite{CS06c}). Here we have to invoke the cutting result of Proposition \ref{prop:reduc}.

\begin{proof}[Proof of the $\Gamma-\liminf$ inequality]
Using the definition (\ref{eq:layer_energy1}), the $\Gamma-\liminf$ inequality follows directly: Without loss of generality we may assume that $\liminf\limits_{n\rightarrow \infty}H_n^1(u_n)\leq  C < \infty$.  In this setting, the compactness result of Proposition \ref{prop:rig2} holds. Thus, along a subsequence, we obtain a limiting deformation $u_0$ which is a simple laminate. In particular, its gradient attains values in $K= SO(2)U_0 \cup SO(2)U_1$ and only has finitely many, say $L\in \N$, jump interfaces. Furthermore, we claim that it suffices to assume that the jump interfaces of $\nabla u_0$ \emph{do not} intersect on $\partial \Omega$. Indeed, this follows from the observation that if there were intersections of jump interfaces on the boundary, then we could carry out the procedure which is described below in domains which slightly stay away from the boundary. More precisely, for any given $\epsilon>0$ we would only cover an $(1-\epsilon)$ fraction of the jump set (i.e. only the interior parts of the jump set, which are at distance $\epsilon/2$ away from the boundary) by the sets $\Omega_n^k$ which are described below. By virtue of the arbitrariness of $\epsilon$ this yields our claim.\\

With this discussion in mind (in particular assuming that the interfaces are separated from each other and do not intersect on $\partial \Omega$), we now cover the jump set of $\nabla u_0$ by subdomains $\Omega_k^n$, of which each only contains a single jump interface or one of the boundary layers given by the points $(x_1,x_2)$ with $x_1+x_2 \leq -4$ and $x_1+x_2\geq 4$ and $|x_2 |\leq 2$. We consider associated subenergies $H_n^{1,k}(\cdot)$ determined by the sets $\Omega_k^n$ and the interfaces and boundaries of the limiting configuration $u_0$:
\begin{align*}
H^1_n(u_n) =  H_n^{1,0}(u_n \chi_{\Omega_k^n}) + \sum\limits_{k=1}^{L-1} H_n^{1,k}(u_n \chi_{\Omega_k^n}) +  H_n^{1,L}(u_n \chi_{\Omega_L^n}).
\end{align*}
By the compactness result of Proposition \ref{prop:rig2}, there exist points $x_k^n\in\Omega$ with $x_k^n \rightarrow x_k\in \Omega$ such that up to subsequences
\begin{align}
\label{eq:conv_1}
u_n(\cdot - x_k^n) \chi_{\Omega_k^n}  \rightarrow  v_{V_k,V_{k+1}}^{\pm} \mbox{ in } L^{1}(\Omega_k^n),
\end{align}
where the functions $v_{V_k,V_{k+1}}^{\pm}$ are defined in Definition \ref{defi:profile}. We further observe that for all $k\in\{0,\dots,L\}$ and for each $\epsilon>0$ there exists $N_{\epsilon,k}\in \N$ such that for all $n\geq N_{\epsilon,k}$
\begin{align*}
H^1_k(u_n \chi_{\Omega_k^n}) \geq \liminf\limits_{n\rightarrow \infty} H^1_k(u_n \chi_{\Omega_k^n}) - \epsilon.
\end{align*}
Then however, with $\epsilon>0$ arbitrary but fixed and $N_{\epsilon}:= \max\limits_{k\in\{0,\dots,L\}}N_{\epsilon,k}$, we immediately infer that for $n\geq N_{\epsilon}$ (where we invoke the independence result of Lemma  \ref{lem:indep} and (\ref{eq:conv_1})):
\begin{align*}
H^1_n(u_n) &= H_n^{1,0}(u_n \chi_{\Omega_k^n}) + \sum\limits_{k=1}^{L-1} H_n^{1,k}(u_n \chi_{\Omega_k^n}) +  H_n^{1,L}(u_n \chi_{\Omega_L^n})\\
& \geq   H_n^{1,0}(u_n \chi_{\Omega_k^n}) - \epsilon + \sum\limits_{k=1}^{L-1} \left(\liminf\limits_{n  \rightarrow \infty} H_n^{1,k}(u_n \chi_{\Omega_k^n}) - \epsilon \right) +  H_n^{1,L}(u_n \chi_{\Omega_L^n)}-\epsilon \\
& \geq  \sum\limits_{k=1}^{L-1} \inf\{ \liminf\limits_{n  \rightarrow \infty} H_n^{1,k}(v_n \chi_{\Omega_k^n}) , v_n  \rightarrow v_{V_k,V_{k+1}}^{\pm} \mbox{ in } L^1(\Omega_k^n)\} \\
& \quad + \inf\{ \liminf\limits_{n  \rightarrow \infty} H_n^{1,0}(v_n \chi_{\Omega_0^n}) , v_n  \rightarrow v_{F_{\lambda},V_{1}} \mbox{ in } L^1(\Omega_0^n), \\
& \quad \quad \quad v_n(i/n,j/n)=F_{\lambda}(i/n,j/n) \mbox{ for } i+j\leq -4n\} \\
& \quad + \inf\{ \liminf\limits_{n  \rightarrow \infty} H_n^{1,L}(v_n \chi_{\Omega_0^n}) , v_n  \rightarrow v_{V_L,F_{\lambda}} \mbox{ in } L^1(\Omega_0^n),\\
& \quad \quad \quad v_n(i/n,j/n)=F_{\lambda}(i/n,j/n) \mbox{ for } i+j\geq 4n\} - (L+1)\epsilon \\
&\geq E_{surf}(u_0) - \epsilon.
\end{align*}
In the second last inequality, we carried out a translation of $u_n$ in order to match the boundary conditions for the right boundary layer.
Since this estimate holds for any $\epsilon>0$, this concludes the proof of the $\Gamma-\liminf$ inequality.
\end{proof}

We now proceed to the proof of the $\Gamma$-limsup inequality. As in Conti and Schweizer \cite{CS06}, \cite{CS06a}, \cite{CS06c} this is the harder part of the argument. In the presence of multiple interfaces we have to cut and paste the different internal and boundary layers, which are provided by the minimization problem that defines the densities of $E_{surf}(\cdot)$. This has to be achieved in a way which leads to an overall admissible sequence. In particular, we have to preserve the non-interpenetration condition. To ensure these issues, we rely on the cutting procedure from Proposition \ref{prop:reduc}.

\begin{proof}[Proof of the $\Gamma-\limsup$ inequality]
For the purpose of this proof, we introduce the following abbreviation:
\begin{align}
\label{eq:en_abb}
H^{1,\pm}_{n}(v,m_1,m_2):= \sum\limits_{(i,j)\in n\Omega_{ m_1,m_2}^{\pm}}\frac{1}{n}h_{v}^{ij},
\end{align}
to denote the energies which were used as a building block in Definition \ref{defi:boundarylayers} and Lemma \ref{lem:indep}. \\

\emph{Step 1: Reduction to Proposition  \ref{prop:reduc}.}
Given $u_0$ which is piecewise affine with gradient in $K=SO(2)U_0  \cup SO(2)U_1 $ in $\Omega$, we have to construct a sequence of $u_n$ which is admissible, converges to $u_0$ in $L^1(\Omega)$ and satisfies
\begin{align*}
\limsup\limits_{n \rightarrow \infty} H_n^1(u_n) \leq  E_{surf}(u_0).
\end{align*}
As $E_{surf}(u_0)$ is defined by a sum of boundary and internal layer energies and as each of these is determined by a minimization process (c.f. (\ref{eq:layer_energy1}), (\ref{eq:total_energy1})), for each jump interface of $\nabla u_0$ we find subsequences $n_j$ and $u_{n_j}^{k}:\Omega_{d,4}^{-} \rightarrow \mR^2$ such that (with the notation from (\ref{eq:en_abb})) for instance
\begin{align*}
\lim\limits_{j  \rightarrow \infty} H_{n_j}^{1,-}(u_{n_j}^k,d,4) = C_-(V_k,V_{k+1},d,4) = 4C_-.
\end{align*}
In the sequel, we concentrate on this single jump interface; the results for the other internal and boundary layers follow analogously.
We seek to modify these functions $u_{n_j}^k$ into new functions $\tilde{u}_{n_j}^k$ such that they are defined in (part of) our original domain $\Omega$ and have affine boundary data. Then, if we can extend the functions $\tilde{u}_{n_j}^k$ to a full sequence in $n\in \N$ (not just the subsequence $\{n_j\}_{j\in\N}\subset \N$; this is done in Step 3), then the affine boundary data would allow us to glue the individual pieces together. This would hence yield a global recovery sequence defined on $\Omega$.\\
Returning to our interface with orientation $(-1,1)$ between the gradients $V_k, V_{k+1}$, we claim that there is a sequence $\tilde{u}_n^k$ (derived from the function $u_ {n_j}^k$) such that in $\Omega_{d,1}^-$ we have
\begin{itemize}
\item[(a)] $\tilde{u}_n^k  \rightarrow u_0 \mbox{ in } L^1(\Omega_{d,1}^-)$,
\item[(b)] $\tilde{u}_n^k$ is affine away from the interface, more precisely there are orientation preserving isometries $I_n$, $I_{n}'$
\begin{align*}
u_n^k(x,y) = \left\{ \begin{array}{ll}
I_n \circ u_0 \mbox{ for } x-y \geq 4/5 d,\\
I_n' \circ u_0 \mbox{ for } x-y \leq -4/5 d,
\end{array}  \right.
\end{align*}
and $I_n, I_n'  \rightarrow Q, Q'\in SO(2)$,
\item[(c)] $H^1_n(\tilde{u}_n^k) = H_{n}^{1,-}(\tilde{u}_{n}^k,d,1)  \rightarrow C_-(V_k, V_{k+1},1,1)=C_-.$
\end{itemize}
The previous claims (a)-(c) are deduced by an application of Proposition \ref{prop:reduc}. 
In order to do so we first observe the independence of $C_-(V_k, V_{k+1},m_1,m_2)$ on the extension of the domain $\Omega_{m_1,m_2}^-$
in the direction orthogonal to the interface (c.f. Lemma \ref{lem:indep}). 
Next, by the definition of $E_{surf}(u_0)$ and by Lemma \ref{lem:indep}, we directly infer that for each $\eta>0$ there exists a number $N_{\eta}\in \N$ such that
\begin{align}
\label{eq:energy_control}
\sum\limits_{\substack{(i,j)\in n\Omega_{d,1}^-,\\  [n/2] \leq |i-j| \leq [n]}}  \frac{1}{n} h_{u_n^k}^{ij} \leq C \eta \mbox{ for all } n\geq N_{\eta}.
\end{align}
Let us then choose $d,l$ in the assumptions of Proposition \ref{prop:reduc} so that
\beas
&&n\Omega_{2d,l/2}^-=\{(i,j)\in n\Omega_{d,1}^-,\ \  [n/2] \leq i-j \leq [n]\}\\
&&\mbox{ or } n\Omega_{2d,l/2}^-=\{(i,j)\in n\Omega_{d,1}^-,\ \  -[n] \leq i-j \leq -[n/2]\}.
\eeas
Finally,  in Step 2 below we construct the one-well energy satisfying \rf{eq:energy}. Combining these observations allows us to apply Proposition \ref{prop:reduc} and hence to replace our minimal sequence $u_{n_j}^k$ by the corresponding
modification $\tilde{u}_{n_j}^{k}$.
The resulting sequence $\tilde{u}_{n_j}^k$ satisfies an analogous energy bound and consequently yields statements (a)-(c) from above. In particular, its boundary data are affine and lie in the respective energy wells. 
These affine boundary data then permit us (after a suitable translation) to glue together the individual functions $\tilde{u}_{n_j}^k$, $k\in\{1,\dots,L\}$, which were obtained for the individual interfaces. Hence, it remains to construct the one-well energy satisfying \rf{eq:energy}. This is the content of the next step.\\ 

\emph{Step 2: Reduction to a one-well energy.}
Seeking to apply the two-well rigidity result of Proposition \ref{prop:twell}, we construct a one-well energy density which satisfies the necessary bounds.\\

We begin by considering the following one-well energy density 
\begin{align}
\label{1WE}
\phi^{ij}_u := \gamma (|U^{i,j}|) \min\{\bar{h}_{u,U_0}^{i,j},\bar{c}/10\} + (1-\gamma)(|U^{i,j}|) k(|U^{ij}|).
\end{align}
Here $\bar{h}_{u,U_0}^{ij}$ denotes the one-well function from Remark \ref{rmk:bracket}, $\bar{c}$ is the constant from (\ref{cb}), $U^{i,j}:=(\partial_1 u^{i,j},\partial_1 u^{i-1,j},\partial_2 u^{i,j},\partial_2 u^{i,\,j-1})$. The function $\gamma\in C^\infty( \mR,\mR)$ is a cut-off function with $\gamma(t)=1$ for all $|t| \leq 10\max\{10\bar{c},100\}$ and $\gamma (t)= 0$ for $|t|\geq 20\max\{10\bar{c},100\}$.
Moreover, $k$ is chosen such that 
\begin{align}
\label{eq:2growthI}
c_1 \dist(\nabla u^{ij}, SO(2)U_0)^2 \leq k(U^{ij}) \leq c_2 \dist(\nabla u^{ij},SO(2)U_0)^2,
\end{align}
for some constants $c_1, c_2 >0$. Similarly as in Remark \ref{rmk:bracket} we interpret $\phi_u$ as the composition of a Lipschitz continuous function $\phi_{\cdot}$ with the piecewise constant function $\nabla u$.\\
We claim that for any $u\in \mathcal{A}_n$
\begin{align}
\label{eq:bound_one_well}
|\nabla_n \phi^{ij}_u| \leq C n h^{ij}_u,
\end{align}
where $h^{ij}_u$ denotes our original model Hamiltonian from Definition \ref{defi:HamiltonianII}.
Indeed, as 
$$|\nabla_n \phi^{ij}_u| \leq Cn(|\phi^{i+1,j}_u - \phi^{i,j}_u| + |\phi^{i,j+1}_u- \phi^{i,j}_u|),$$ 
(\ref{eq:bound_one_well}) directly follows in the region where $\dist(\nabla u^{ij}, SO(2)U_0)^2 \leq \bar{c}/100$ from (\ref{1WE}). This is due to the fact that in this region, in (\ref{1WE}) the first summand and there the first term in the bracket is active. But then for $(i,j)\in n \Omega$ with $\dist(\nabla u^{ij}, SO(2)U_0)^2 \leq \bar{c}/100$, this first term in the bracket is controlled by our original Hamiltonian $h^{ij}_u$ (as also here the first bracket is active while the second one is bounded below). \\
In the region where $\dist(\nabla u^{ij}, SO(2)U_1) \leq \dist(\nabla u^{ij}, SO(2)U_0) $ and $|\nabla u^{ij}|\leq \max\{100, 10\bar{c}\}$, the function $\phi^{ij}_u$ is constant, hence $0=|\nabla_n \phi_u^{ij}| \leq h_u^{ij} $. Thus, by compactness (and by the Lipschitz regularity of $\phi_{\cdot}$), the bound (\ref{eq:bound_one_well}) follows for all values of $\nabla u^{ij}$ with $|\nabla u^{ij}|\leq  \max\{100, 10\bar{c}\}$. Finally, for $|\nabla u^{ij}|\geq  \max\{100, 10\bar{c}\}$, the bound follows from the two-growth assumption (\ref{eq:2growthI}) which is satisfied by both $h^{ij}_u$ and $\phi^{ij}_u$. \\

Using (\ref{eq:bound_one_well}), we thus infer
\begin{align*}
&\sum\limits_{\substack{(i,j)\in n\Omega,\\  -[n/2] \leq | i- j| \leq [n]}}\frac{1}{n^2}|\nabla_n \phi^{ij}_u|
\leq C \sum\limits_{\substack{(i,j)\in n\Omega,\\  -[n/2] \leq | i- j| \leq [n]}}\frac{1}{n}  h^{ij}_u \leq c \eta.
\end{align*}
This then permits us to invoke Proposition \ref{prop:twell}.\\

Hence, on the level of our subsequence $u_{n_j}^k$ we have obtained a recovery sequence. In order to pass to a full sequence, we invoke a scaling argument as in \cite{CS06}.\\

\emph{Step 3: Passage from the subsequence $n_j$ to a full sequence in $n\in \N$.}
The proof of the extension of the recovery sequence from a subsequence to a full sequence follows along the argument given by Conti and Schweizer \cite{CS06a}. It relies on a combination of a scaling argument and the energy control from (\ref{eq:energy_control}). As before we restrict our attention to a single interface which has a normal pointing into the $(-1,1)$ direction. For the situation with more interfaces we argue locally around each interface.\\

\emph{Step 3a: Scaling.}
We claim that for each $n\in \N$ there exists a function $v_n:\Omega_{d,1}^-\rightarrow \mR^2$ such that (with the abbreviation from (\ref{eq:en_abb}))
\begin{align*}
\limsup\limits_{n\rightarrow \infty} H^{1,-}_n(v_n,\infty,1)\leq C_-, 
\end{align*}
and there exists $L_n>0$ such that 
\begin{align*}
\nabla v_n(x_1,x_2)& = U_0 \mbox{ for } x_1 - x_2 \geq L_n,\\
\nabla v_n(x_1,x_2)&= QU_1 \mbox{ for } x_1 - x_2 \leq -L_n.
\end{align*}

The claim follows from scaling. Indeed, by the definition of $C_-(U_0, QU_1, d,1)$ there exist sequences $n_j$ and $u_{n_j}$ such that
\begin{align*}
H^{1,-}_n(u_{n_j},d,4) \rightarrow 4 C_- \mbox{ and } u_{n_j} \rightarrow v_{U_0, QU_1}^{-} \mbox{ in } L^1(\Omega_ {d,4}^{-}).
\end{align*}
By Proposition \ref{prop:reduc} this implies that there exists a sequence $\tilde{u}_{n_j}$ with affine boundary data such that
\begin{align*}
H^{1,-}_n(\tilde{u}_{n_j},d,1) \rightarrow C_- \mbox{ and } \tilde{u}_{n_j} \rightarrow v_{U_0, QU_1}^{-} \mbox{ in } L^1(\Omega_ {d,1}^{-}).
\end{align*}
Let $\epsilon_j:= |H^{1,-}_n(\tilde{u}_{n_j},d,1) - C_- |$ denote the error at stage $j$ and assume that $n_j$ is a monotone increasing sequence. Then, for given $n\in \N$, let $n_j\in \N$ be the smallest element in $\{n_j\}_{j\in\N}$ such that $n^2 < n_j$ and abbreviate $\alpha:=\frac{n_j}{n}>n$. Moreover, define
\begin{align*}
\tilde{v}_n(i,j):= \alpha \tilde{u}_n(i/\alpha, j/\alpha).
\end{align*}
Thus, the scaling of the energy (c.f. proof of Lemma \ref{lem:indep}) yields
\begin{align*}
H_n^{1,-}(\tilde{v}_n,\alpha d,\alpha) = \alpha H^{1,-}_n(\tilde{u}_{n_j},  d,1) \leq \alpha C_{-} + \alpha \epsilon_j + \frac{c}{\alpha n}. 
\end{align*}
By translating, it is possible to find a point $(\bar{i},\bar{j})\in \Omega_{\alpha d,  \alpha}^-$ with distance $d$ away from the boundary such that for $v_n(i,j):=\tilde{v}_n(i,j+\bar{j})$ we have
\begin{align}
\label{eq:en_con}
H_n^{1,-}(v_n, \alpha d, 1) =  H^{1,-}_n(\tilde{u}_{n_j}, d, 1) \leq  C_{-} +  \epsilon_j + \frac{c}{\alpha^2 n}. 
\end{align}
As by construction $\nabla v_n^{ij}$ is in the energy wells if $i-j \geq \alpha$ or $i-j\leq -\alpha$, this proves the claim with $L_n:=\alpha$.\\

\emph{Step 3b: Energy bounds.}
We claim that there exist $h>0, L>2h, \delta>0$, all independent of $n$, a function $w_n:\Omega_{L,d}^{-} \rightarrow \mR^2$ such that
\begin{align*}
\limsup\limits_{n \rightarrow \infty} H^{1,-}_n(w_n,L,1) \leq C_-,
\end{align*}
and
\begin{itemize}
\item[(i)] for half of all points $(i,j)$ with $i-j\in(Ln-hn, Ln)$ 
\begin{align*}
\essinf\limits_{j\in(-n,n)\cap \Z^2}\dist(\nabla w_n^{i,j},SO(2)U_1) \geq \delta,
\end{align*}
\item[(ii)] there exists a value $j_n \in (-Ln, Ln-2hn)$ (depending on $n$) such that for half of the points $(i,j)$ with $i-j\in (j_n,j_n+hn)$ it holds
\begin{align*}
\essinf\limits_{j\in(-n,n)\cap \Z^2}\dist(\nabla w_n^{i,j},SO(2)U_0) \geq \delta.
\end{align*}
\end{itemize}
This shows (in a weak form) that the transition from $SO(2)U_1$ to $SO(2)U_0$ already takes place in the smaller domain $\Omega_{L,d}^{-}$. The proof follows from the energy control in (\ref{eq:en_con}). More precisely, we choose $\delta<\dist(SO(2)U_0, SO(2)U_1)/10$ and consider
\begin{align*}
f_{U_0}(c)&:= \#(\{ (i,j) \in n \Omega_{\infty,1}^-: i-j=c, \ \dist(\nabla v_n^{i,j},SO(2)U_0)\leq \delta)\},\\
f_{U_1}(c)&:= \#(\{ (i,j) \in n \Omega_{\infty,1}^-: i-j=c, \ \dist(\nabla v_n^{i,j},SO(2)U_1)\leq \delta)\}.
\end{align*}
In order to prove the statement, we show that $f_{U_0}, f_{U_1}$ are essentially characteristic functions. For this we observe the following points:
\begin{itemize}
\item As $H_n^{1,-}(v_n, \infty, 1) \leq c$, we have 
\begin{align*}
&\#\{c:  f_{U_0}(c) + f_{U_1}(c)< \frac{3}{2}\}\leq c_1 n.
\end{align*}
\item As a transition from $SO(2)U_0$ to $SO(2)U_1$ costs a finite amount of energy (c.f. the argument in Lemma \ref{lem:lowerbound}), 
\begin{align*}
&\#\{c: f_{U_0}(c) \neq 0,  f_{U_1}(c)\neq 0\}\leq c_2n.
\end{align*}
\item It holds
\begin{align*}
f_{U_0}(c) = 2n \mbox{ for } c \geq n L_n, \ f_{U_0}(c) = 0 \mbox{ for } c \leq - n L_n.
\end{align*}
\item If $f_{U_0}(c_a)\geq \frac{3}{2}n$ and $f_{U_1}(c_b)\geq \frac{3}{2}n$, then for $\bar{v}_n(i,j):= v(i+i_c,j+j_c)$ where $i_c - j_c = \frac{c_a+ c_b}{2}$,
\begin{align*}
H_n^{1,-}(\bar{v}_n,|c_a-c_b|,1) \geq c .
\end{align*}
\end{itemize}
Combining this, we obtain that there exists sets $M_0,M_1,M_2$ of points $(i,j)$ such that $\# M_2 \leq (c_1+c_2)n^2$,
\begin{align*}
f_{U_0}(c) \geq \frac{3}{2}n, \ f_{U_1}(c)=0 \mbox{ for } (i,j)\in M_0 \mbox{ with } i+j =c,\\
f_{U_0}(c) = 0 ,\ f_{U_1}(c)\geq \frac{3}{2}n,  \mbox{ for } (i,j)\in M_1 \mbox{ with } i+j =c. 
\end{align*}
In other words, $M_0$ denotes the set of lines such that $SO(2)U_0$ is the preferred value of $\nabla u$ on these lines. $M_2$ plays the same role for lines on which $\nabla u$ is mostly in $SO(2)U_1$. Finally, $M_3$ denotes the ``mixed'' lines where both $SO(2)U_0$ and $SO(2)U_1$ appear in a large volume fraction.\\
We observe that the number of interfaces between $M_0,M_1$ is bounded by a constant $c_3$ (which is uniform in $n$). Defining $c_a:= \inf\{c\in \Z: \mbox{ for all } (i,j) \mbox{ with } i-j \in (c-j,c), \ (i,j)\notin M_1\}$ and choosing $h \geq 2(c_1+c_2)$ yields (ii) in the interval given by $(i,j)$ with $i-j\in (c_a-h,c_a)$. Choosing a large number $L$ with $L>(c_3+2)h$ and dividing the interval of points in which $i-j \in (c_a-L,c_a -h)$ into sections of size $h$, we note that by definition, all of them intersect $M_1$. For sufficiently large $L$ there exists one section which does not intersect $M_0$, which follows as the number of interfaces between $M_1$ and $M_0$ is bounded by $c_3$. This yields the existence of the desired value $c_n$ from (ii). The function $w_n$ is obtained by an appropriate translation of the function $v_n$.\\

\emph{Step 3c: Compactness and conclusion.}
Finally we construct the desired full sequence $u_n$ in $\Omega_{d,1}^-$. This sequence both satisfies the energy bound (\ref{eq:energy_control}) and converges against the desired limiting profile. To this end, we claim that there exists a rotation $R$, a point $a\in (-L+h/2, L-h/2)$ and a translation vector $b\in \mR^2$ such that
\begin{align}
\label{eq:conv_a}
\| R w_n(i + i_a, j+ j_a) + b - v_{U_0,Q U_1}^-(i/n,j/n) \|_{L^1(\Omega_{d,1}^{-})} \rightarrow 0.
\end{align}
Indeed, this follows from compactness. Assume that it were not the case. Then, by the boundedness of the energy of $w_n$ we can invoke Proposition \ref{prop:rig2} along the ``bad sequence'' which satisfies the energy bound (\ref{eq:energy_control}) but does not obey (\ref{eq:conv_a}) and obtain a limiting deformation $w_{\infty}$ with $\nabla w_{\infty}\in K$. For $i-j\leq -L$ and $i-j\geq L$ it attains the gradient values $R_1 U_0$ and $R_2 U_1$ with $R_1,R_2\in SO(2)$, respectively. Moreover, by Step 2b, the interface must have the normal $(-1,1)$. Hence, the limiting deformation $w_{\infty}$ involves \emph{at least one} interface between the energy wells and the corresponding interface has the right orientation. Furthermore, it cannot involve more interfaces, as these would cost a non-vanishing additional amount of energy (c.f. the proof of the $\Gamma$-$\liminf$ inequality). This yields a contradiction to our assumption that $w_n$ does not converge to the desired limiting profile after translation and rotation. Rotating and translating $w_n$ appropriately, yields the definition of $u_n$ and concludes the proof.
\end{proof}

\section*{Acknowledgments}
G.K. acknowledges a postdoctoral scholarship at the Max Planck Institute for Mathematics in the Science
during which this work was initiated as well as partial funding from the Leverhulme Trust grant,
"Liquid Crystal Defects in Landau-de Gennes theory", RPG-2014-226 leading to these results.
A.R. acknowledges that the research leading to these results has received funding from the European Research Council under the European Union's Seventh Framework Programme (FP7/2007-2013) / ERC grant agreement no 291053 and a Junior Research Fellowship at Christ Church.

\begin{appendix}

\section{Mapping the Microscopic Two-Well Problem to a Spin System}
\label{sec:spin}
In this section we map the two-well problem to a spin system and prove one-sided energy bounds which are crucially used in the compactness result of Section \ref{sec:compact}. In the whole section for convenience of notation we assume that the constant $\bar{c}$ from (\ref{cb}) is such that $\bar{c}\leq \bar{c}^2$. In this section, the discrete nature of our problem is strongly used. In this context, a number of arguments simplify with respect to the analogous continuous models. It is in this part of our argument that the origins of the surface energies in the discrete model appear for the first time.
We begin by introducing the corresponding definitions and abbreviations. 

\begin{defi}[Spin Hamiltonian]
\label{defi:conventions}
Let $\bar{c}$ be as in (\ref{cb}) and $u\in\mathcal{A}_n^{F_\lambda}$. Then we set 
$$\tilde{\Omega}_0^n:= \left\{(i/n,j/n) \in \Omega: \ h^{i,\,j}_{u_n} \leq \frac{\bar{c}}{10} \mbox{ and } \dist(\nabla u_n^{i,j} , SO(2)U_0) \leq \frac{\bar{c}}{10}   \right\},$$
and $\tilde{\Omega}_1^n:=\Omega_n \setminus \tilde{\Omega}^n_0$. 
We further define the \emph{discrete spin function} $\sigma_n: (n^{-1}\Z)^2 \cap \Omega\go \{\pm 1\}$ as
\begin{equation}
\label{eq:spin}
\begin{split}
\sigma_n^{i,j}:= \left\{ \begin{array}{ll} 
1 &\mbox{ if } (i/n,j/n)\in \tilde{\Omega}_0^n,\\
-1 &\mbox{ else,}
\end{array}
 \right.
\end{split}
\end{equation}
and correspondingly the \emph{spin Hamiltonian} as
\begin{align*}
  H_{n}^s(\sigma_n^{i,j}):= \sum\limits_{(i/n,j/n)\in G^n \cap \Omega}\sum\limits_{(k,l)\in  \{(i+1,j), (i, j+1), (i-1,j), (i,j-1)\}} n^{-2} (\sigma_n^{i,j}-\sigma_{n}^{k,l})^2.
\end{align*}
\end{defi}

Next, we claim the following one-sided comparability of the two-well and spin Hamiltonians.

\begin{prop}
\label{prop:comparison}
Let  $u\in \mathcal{A}_n^{F_\lambda}$.
Then there exists a constant $0<C=C(a,b)<\infty$ such that
\begin{equation}
\label{eq:Hamiltonian}
H_n(u_n)\geq C H_n^s(\sigma_n^{i,j}).
\end{equation}
\end{prop}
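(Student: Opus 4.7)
The plan is to show that along every edge of the lattice $(n^{-1}\mathbb{Z})^2$ on which the spin $\sigma_n$ jumps, the two-well density $h_u$ is bounded below by a positive constant depending only on $a,b$ at one of its endpoints, and then to convert this pointwise bound into the integral comparison by a combinatorial count of flip edges.

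First I would fix neighbours $(i,j), (k,l)$ with $\sigma_n^{i,j}=+1$ and $\sigma_n^{k,l}=-1$; by Definition \ref{defi:conventions} this means $h_u^{i,j}\leq \bar{c}/10$ and $\dist(\nabla u^{i,j}, SO(2)U_0)\leq \bar{c}/10$, while at $(k,l)$ at least one of these two inequalities fails. My goal is to establish $h_u^{k,l}\geq C_0 = C_0(a,b)>0$. If $h_u^{k,l}>\bar{c}/10$ there is nothing to do, so the interesting case is $h_u^{k,l}\leq \bar{c}/10$ together with $\dist(\nabla u^{k,l}, SO(2)U_0)>\bar{c}/10$. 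Writing $h_u^{k,l} = \bar{h}^{k,l}_{u,U_0}\,\bar{h}^{k,l}_{u,U_1}$ as in Remark \ref{rmk:bracket}, I would argue that each factor is bounded below by a constant depending only on $a,b$. For $\bar{h}^{k,l}_{u,U_0}$: if it were smaller than a suitable threshold, its length and orthogonality terms would force $|\partial_1 u^{k,l}|$ close to $a$, $|\partial_2 u^{k,l}|$ close to $b$ and $\partial_1 u^{k,l}\perp\partial_2 u^{k,l}$, which together with $\det\nabla u^{k,l}>0$ from the non-interpenetration condition $u\in\mathcal{A}_n^{F_\lambda}$ would place $\nabla u^{k,l}$ within $\bar{c}/10$ of $SO(2)U_0$, contradicting the assumption. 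For $\bar{h}^{k,l}_{u,U_1}$: I exploit the fact that for a horizontal edge $(k,l)=(i+1,j)$ the derivative $\partial_1 u^{i,j}$ enters the stencils of both $h_u^{i,j}$ and $h_u^{k,l}$; the bound $\dist(\nabla u^{i,j}, SO(2)U_0)\leq\bar{c}/10$ pins $|\partial_1 u^{i,j}|$ to within $\bar{c}/10$ of $a$, and hence the ``wrong-well'' term $(|\partial_1 u^{i,j}|^2 - b^2)^2$ inside $\bar{h}^{k,l}_{u,U_1}$ is bounded below by a positive constant depending only on $a,b$. The three remaining neighbour orientations are treated identically using the appropriate shared partial derivative. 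Multiplying the two bounds gives $h_u^{k,l}\geq C_0>0$.

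To conclude, let $N$ denote the number of unoriented lattice edges on which $\sigma_n$ flips. The explicit form of the spin Hamiltonian gives $H_n^s(\sigma_n) = 8 N/n^2$, while the pointwise bound above guarantees that each flip edge has an endpoint (the $-1$ one) with $h_u\geq C_0$; since each lattice point is an endpoint of at most four edges,
\begin{align*}
H_n(u_n) \;=\; \sum_{(i,j)\in n\Omega} n^{-2}\, h_u^{i,j} \;\geq\; \frac{C_0 N}{4 n^2} \;\geq\; \frac{C_0}{32}\, H_n^s(\sigma_n),
\end{align*}
yielding Proposition \ref{prop:comparison} with $C = C_0/32$.

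The hard part will be the quantitative case analysis leading to $h_u^{k,l}\geq C_0$: one has to verify, with $\bar{c}=\dist(SO(2)U_0, SO(2)U_1)$ and the convention $\bar{c}\geq 1$ adopted at the start of this appendix, that the threshold $\bar{c}/10$ in Definition \ref{defi:conventions} is compatible with both halves of the factor-by-factor argument. The elementary estimate $\bigl||\partial_1 u^{i,j}|-a\bigr|\leq\dist(\nabla u^{i,j}, SO(2)U_0)\leq \bar{c}/10$ cleanly separates the two wells on the length of the shared derivative, which is what makes the ``wrong-well'' term uniformly positive.
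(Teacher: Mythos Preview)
Your argument is correct and mirrors the paper's own proof: the paper isolates exactly the same mechanism in its Lemma~\ref{lem:lowerbound} (a spin flip forces $h_u^{i+1,j}>\bar c/100$ via the shared first derivative $\partial_1 u^{i,j}$ appearing simultaneously in the $U_0$-bracket at $(i,j)$ and the $U_1$-bracket at $(i+1,j)$), and then converts this into the Hamiltonian comparison by the same edge-count. Your factor-by-factor lower bound on $\bar h^{k,l}_{u,U_0}$ and $\bar h^{k,l}_{u,U_1}$ is just the contrapositive of the paper's contradiction argument.

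One small point to tighten: your claim that ``the three remaining neighbour orientations are treated identically'' hides a detail. For $(k,l)=(i+1,j)$ and $(k,l)=(i,j+1)$ the shared derivative is $\partial_1 u^{i,j}$ or $\partial_2 u^{i,j}$, and your appeal to $\dist(\nabla u^{i,j},SO(2)U_0)\le\bar c/10$ suffices. But for $(k,l)=(i-1,j)$ or $(k,l)=(i,j-1)$ the shared derivative is $\partial_1 u^{i-1,j}$ or $\partial_2 u^{i,j-1}$, which are \emph{not} columns of $\nabla u^{i,j}$. Here you must also use the second defining condition $h_u^{i,j}\le\bar c/10$ of $\tilde\Omega_0^n$: together with the distance bound this forces $\bar h^{i,j}_{u,U_0}$ small, and \emph{that} bracket contains the needed length terms $(|\partial_1 u^{i-1,j}|^2-a^2)^2$ and $(|\partial_2 u^{i,j-1}|^2-b^2)^2$. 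With this adjustment the four cases genuinely are parallel.
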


In order to prove this proposition, we first show the following auxiliary result:

\begin{lem}
\label{lem:lowerbound}
Let $u\in \mathcal{A}_n^{F_\lambda}$ and $(i/n,j/n)\in \tilde{\Omega}_0^n$ but assume that $((i+1)/n,j/n)\notin \tilde{\Omega}_0^n$. Then, 
\begin{align*}
h^{i+1,\,j}_u>\frac{\bar{c}}{100}.
\end{align*}
\end{lem}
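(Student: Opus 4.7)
The overall strategy is to exploit the product structure $\tilde h^{i+1,j}_u = \bar h^{i+1,j}_{u,U_0}\cdot \bar h^{i+1,j}_{u,U_1}$ together with the key algebraic observation that the horizontal derivative $\partial_1 u^{i,j}=n(u^{i+1,j}-u^{i,j})$ enters \emph{both} of the local energies $\tilde h^{i,j}_u$ and $\tilde h^{i+1,j}_u$. It is this shared bond between the two lattice sites that generates the lower bound, and morally speaking it is this mechanism that is responsible for the surface energy produced by the spin Hamiltonian in Proposition \ref{prop:comparison}.

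First I would reduce to the regime in which the cut-off $\gamma$ is inactive, i.e.\ $|U^{i+1,j}|\leq 10(\bar c+1)$. Outside this regime, the $k$-part of $h^{i+1,j}_u$ obeys $h^{i+1,j}_u\geq c_1(1-\gamma)(|U^{i+1,j}|)|U^{i+1,j}|^2\gg \bar c/100$ (using the convention $\bar c\geq 1$), and the claim is immediate. So henceforth assume $h^{i+1,j}_u=\tilde h^{i+1,j}_u$.

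Next I would use the hypothesis $(i/n,j/n)\in\tilde\Omega_0^n$ to lower-bound $\bar h^{i+1,j}_{u,U_1}$. From $\dist(\nabla u^{i,j},SO(2)U_0)\leq \bar c/10$, a column-wise application of the triangle inequality yields $\bigl||\partial_1 u^{i,j}|-a\bigr|\leq \bar c/10$, since every matrix in $SO(2)U_0$ has first column of length $a$. Combining this with $|a-b|=\bar c/\sqrt 2$ (computed from \eqref{cb} via \eqref{eq:ROC}) and the convention $\bar c\leq \bar c^2$ (i.e.\ $\bar c\geq 1$), a direct algebraic estimate shows
\begin{align*}
\bigl(|\partial_1 u^{i,j}|^2-b^2\bigr)^2 \geq \alpha,
\end{align*}
for some constant $\alpha=\alpha(a,b)>0$ that is explicitly computable in terms of $a,b$. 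Since (by Remark \ref{rmk:bracket} and the definition of $\bar h_{U_1}$) the nonnegative term $(|\partial_1 u^{i,j}|^2-b^2)^2$ is one of the summands constituting $\bar h^{i+1,j}_{u,U_1}$, we obtain $\bar h^{i+1,j}_{u,U_1}\geq \alpha$.

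Arguing by contradiction, assume $h^{i+1,j}_u\leq \bar c/100$. Then the product structure gives $\bar h^{i+1,j}_{u,U_0}\leq \bar c/(100\alpha)$. Every summand in $\bar h^{i+1,j}_{u,U_0}$ is itself nonnegative, so each of the length terms $(|\partial_1 u^{i+1,j}|^2-a^2)^2$, $(|\partial_2 u^{i+1,j}|^2-b^2)^2$ and the orthogonality term $|(\partial_1 u^{i+1,j},\partial_2 u^{i+1,j})|$ is bounded by $\bar c/(100\alpha)$. The same column-wise argument as in Lemma \ref{lem:lower} then turns these bounds into $\dist(\nabla u^{i+1,j},SO(2)U_0)\leq \bar c/10$. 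Together with $h^{i+1,j}_u\leq \bar c/100\leq \bar c/10$, this would force $((i+1)/n,j/n)\in\tilde\Omega_0^n$, contradicting the hypothesis. Hence $h^{i+1,j}_u>\bar c/100$, as claimed.

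The main technical point I would expect to grind through carefully is tracking the constants, in particular verifying that the constant $\alpha(a,b)$ is indeed large enough relative to the universal numeric factors (the factors of $10$ and $100$ appearing in the definition of $\tilde\Omega_0^n$ and in the statement) so that the implications ``$\bar h^{i+1,j}_{u,U_0}\leq \bar c/(100\alpha)$'' $\Rightarrow$ ``$\dist(\nabla u^{i+1,j},SO(2)U_0)\leq \bar c/10$'' actually close; this is exactly where the convention $\bar c\geq 1$ is used to absorb the worst factors. The structural content of the proof, however, is entirely captured by the single observation that $\partial_1 u^{i,j}$ is a \emph{shared argument} of $\bar h^{i,j}_{u,U_0}$ and $\bar h^{i+1,j}_{u,U_1}$.
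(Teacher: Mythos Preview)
Your argument identifies exactly the mechanism the paper exploits: the horizontal bond $\partial_1 u^{i,j}=n(u^{i+1,j}-u^{i,j})$ appears \emph{both} in $\bar h^{i,j}_{u,U_0}$ (via the term $(|\partial_1 u^{i,j}|^2-a^2)^2$) and in $\bar h^{i+1,j}_{u,U_1}$ (via the term $(|\partial_1 u^{i,j}|^2-b^2)^2$). The paper's proof uses this shared bond to reach the contradiction in one stroke: from $(i/n,j/n)\in\tilde\Omega_0^n$ it extracts $\bar h^{i,j}_{u,U_0}\leq \bar c/4$, and from the contradiction hypothesis $h^{i+1,j}_u\leq\bar c/100$ together with $((i+1)/n,j/n)\notin\tilde\Omega_0^n$ it extracts $\bar h^{i+1,j}_{u,U_1}\leq \bar c/4$; these two force $|\partial_1 u^{i,j}|^2$ to be simultaneously close to $a^2$ and to $b^2$, which is impossible. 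So the contradiction is reached purely at the level of the single length $|\partial_1 u^{i,j}|$, without ever having to translate smallness of a bracket back into smallness of $\dist(\nabla u^{i+1,j},SO(2)U_0)$.

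Your route is correct in spirit but takes a detour, and that detour contains a small inaccuracy. You write that ``the orthogonality term $|(\partial_1 u^{i+1,j},\partial_2 u^{i+1,j})|$'' is a summand of $\bar h^{i+1,j}_{u,U_0}$; it is not. If you expand the $\pm$--sum in the angle contribution of the Hamiltonian you find that it equals the single scalar $\bigl(\partial_2 u^{i,j}-\partial_2 u^{i,j-1},\,\partial_1 u^{i,j}-\partial_1 u^{i-1,j}\bigr)$, i.e.\ an inner product of second differences, which does \emph{not} directly control $(\partial_1 u^{i+1,j},\partial_2 u^{i+1,j})$. Your implication ``$\bar h^{i+1,j}_{u,U_0}$ small $\Rightarrow$ $\dist(\nabla u^{i+1,j},SO(2)U_0)\leq\bar c/10$'' therefore needs a patch: use Lemma~\ref{lem:lower} on the full product $\tilde h^{i+1,j}_u\leq\bar c/100$ to get $\dist(\nabla u^{i+1,j},K)$ small, and then use the length bounds $|\partial_1 u^{i+1,j}|\approx a$, $|\partial_2 u^{i+1,j}|\approx b$ (which you do have from $\bar h^{i+1,j}_{u,U_0}$) to see that the nearest well must be $SO(2)U_0$. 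With this fix your argument closes; the paper's version simply avoids this extra step by locating the contradiction at the shared bond itself.
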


\begin{proof}
Indeed, this follows from the two body interactions which are involved in the definition of $h^{ij}_{u}$ close to the two wells $SO(2)U_0\cup SO(2)U_1$. In order to see this, we argue by contradiction and assume that the conclusion of the lemma were false. This would entail that (in the notation of Remark \ref{rmk:bracket})
\begin{align*}
& \bar{h}^{ij}_{u, U_0} \leq \frac{\bar{c}}{4} \mbox{ and } \bar{h}^{i+1,j}_{u, U_1} \leq \frac{\bar{c}}{4}.
\end{align*}
However, this is not possible as the first assumption implies
\begin{align*}
(|\p_1 u^{i,j}|^2-a^2)^2 \leq \frac{\bar{c}}{4},
\end{align*}
while the second condition enforces
\begin{align*}
(|\p_1 u^{i,j}|^2-b^2)^2 \leq \frac{\bar{c}}{4}.
\end{align*}
As by virtue of the choice of $\bar{c}$ this is not possible simultaneously, we obtain a contradiction. Thus, we conclude the desired result.
\end{proof}

Using the previous lemma, we can proceed with the proof of Proposition  \ref{prop:comparison}:

\begin{proof}[Proof of Proposition  \ref{prop:comparison}]
By definition we have that $(\sigma_n^{i,j}-\sigma_{n}^{k,l})^2 \in \{0,4\}$ for each $(i/n,j/n)\in \Omega_n$ and any of its neighbors $(k/n,l/n) \in \Omega_n$. 
Thus, we only have to argue, that the number of points on which $h^{i,\,j}_u$ is uniformly bounded from below, e.g. by $\frac{\bar{c}}{100}$, is larger or equal to the number of points on which $(\sigma_n^{i,j}-\sigma_{n}^{k,l})^2 $ attains the value $4$. But this is ensured by Lemma  \ref{lem:lowerbound}.
\end{proof}

As a direct corollary of Proposition \ref{prop:comparison} and the energy bound (\ref{eq:surfscaling}) we obtain that for each $n\in\N$ the set of edges in $G^n$ which connects two vertices such that $(\sigma_n^{i,j}-\sigma_{n}^{k,l})^2 $ attains the value $4$ has a uniformly (in $n$) bounded one-dimensional Hausdorff measure. It divides $\Omega$ into two connected components $\Omega_0^n$ and $\Omega_1^n$ such that $\sigma_n^{i,j}=\pm 1$ for $(i/n,\,j/n)\in \Omega_{0,1}^n$, respectively. Both are Caccioppoli sets, whose perimeter is uniformly bounded in $n$. Moreover, we can interpolate the lattice function $\sigma^{ij}_n$ constantly and define a function $\sigma\in BV(\Omega)$ which is equal to $\pm 1$ for $x\in\Omega_{0,1}^n$, respectively.
Hence, by the compactness results for sequences of Caccioppoli sets, along subsequences we obtain the existence of limiting Caccioppoli sets $\Omega_{0}$ and $\Omega_1$ of the sets $\Omega^n_{0}$ and $\Omega^n_1$.
This is summarized in the next proposition.

\begin{prop}
\label{prop:conver}
Let $\{u_n\}_{n\in\N}$ be a sequence of lattice deformations with $u_n \in \mathcal{A}_n^{F_\lambda}$, satisfying the energy bound (\ref{eq:surfscaling}).
Let $\Omega^n_0$ be as above, then there exists a subsequence $\{n_j\}_{j\in \N}$ and (up to zero sets) disjoint Caccioppoli sets $\Omega_0$, $\Omega_1$ such that
\begin{align*}
&\Omega_0^{n_j}  \rightarrow \Omega_0 \mbox{ and }
\Omega_1^{n_j}  \rightarrow \Omega_1 \mbox{ in measure, i.e. }\\
&\big|\Omega_0^{n_j} \Delta \Omega_0\big|  \rightarrow 0 \mbox{ and }  \big|\Omega_1^{n_j} \Delta \Omega_1\big|  \rightarrow 0.
\end{align*}
Moreover, $\Omega= \Omega_0  \cap \Omega_1$.
\end{prop}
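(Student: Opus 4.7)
The plan is to use Proposition \ref{prop:comparison} to translate the energy bound \rf{eq:surfscaling} into a uniform perimeter bound on $\Omega_0^n$, and then invoke the standard compactness theorem for Caccioppoli sets. First I would apply Proposition \ref{prop:comparison} together with \rf{eq:surfscaling} to obtain
\begin{equation*}
H_n^s(\sigma_n) \leq C^{-1} H_n(u_n) \leq \frac{C'}{n}.
\end{equation*}
Since each summand $(\sigma_n^{i,j}-\sigma_n^{k,l})^2$ takes values in $\{0,4\}$, this means that the number $N_n$ of nearest-neighbor pairs $\{(i,j),(k,l)\}\subset n\Omega$ across which $\sigma_n$ flips sign satisfies $N_n \leq c\, n$.

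Next I would identify the piecewise constant interpolation $\sigma_n \in BV(\Omega,\{-1,+1\})$ with the function that equals $+1$ on $\Omega_0^n$ and $-1$ on $\Omega_1^n$. Each ``broken bond'' contributes exactly an edge of length $1/n$ to $\partial \Omega_0^n$ inside $\Omega$, so
\begin{equation*}
\Per_{\Omega}(\Omega_0^n) \leq  N_n \cdot \tfrac{1}{n} \leq c,
\end{equation*}
uniformly in $n$ (up to an additional, harmless boundary contribution stemming from $\partial \Omega$, which is bounded since $\Omega$ itself is a fixed polygon). Thus $\{\chi_{\Omega_0^n}\}_{n\in\N}$ is a bounded sequence in $BV(\Omega)$ with values in $\{0,1\}$.

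At this point I would invoke the standard compactness theorem for sets of finite perimeter (c.f. Ambrosio--Fusco--Pallara): there exists a subsequence $\{n_j\}_{j\in\N}$ and a Caccioppoli set $\Omega_0\subset \Omega$ such that $\chi_{\Omega_0^{n_j}} \to \chi_{\Omega_0}$ in $L^1(\Omega)$, equivalently $|\Omega_0^{n_j}\triangle \Omega_0|\to 0$. Setting $\Omega_1:= \Omega\setminus \Omega_0$, we obtain the analogous convergence $|\Omega_1^{n_j}\triangle \Omega_1|\to 0$ since $\Omega_0^n$ and $\Omega_1^n$ form (up to a $\mathcal{H}^1$-null set of grid edges) a disjoint decomposition of $\Omega$ for every $n$. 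The identity $\Omega = \Omega_0 \cup \Omega_1$ and the disjointness (up to zero sets) are then immediate consequences. The main (and only mildly technical) obstacle is the bookkeeping relating the discrete ``broken bond'' count to the BV perimeter of the piecewise constant interpolation; this is a direct, standard verification but one has to be careful to discard the $\mathcal{H}^1$-null grid-edge set on which $\sigma_n$ is not uniquely defined.
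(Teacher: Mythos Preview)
Your proposal is correct and follows essentially the same route as the paper: the paper deduces from Proposition~\ref{prop:comparison} and the bound \eqref{eq:surfscaling} that the set of ``broken bonds'' has uniformly bounded one-dimensional Hausdorff measure, hence $\Omega_0^n,\Omega_1^n$ have uniformly bounded perimeter, and then applies compactness for Caccioppoli sets. Your bookkeeping $H_n^s\le C'/n \Rightarrow N_n\le cn \Rightarrow \Per_\Omega(\Omega_0^n)\le c$ is exactly this argument spelled out, and your interpretation of the final identity as $\Omega=\Omega_0\cup\Omega_1$ (rather than the evident typo $\cap$ in the statement) is the intended one.
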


Finally, in the next proposition we relate the limiting sets $\Omega_0$ and $\Omega_1$ to corresponding limiting sets of ``low energy deformations'' (c.f. \ref{eq:surfscaling}) to the two-well problem.

\begin{prop}
\label{lem:L2_conv}
Let $\{u_n\}_{n\in \N} \subset \mathcal{A}_n^{F_\lambda}$ be piecewise affine functions on the grid $\Omega_n$ satisfying (\ref{eq:surfscaling}). Then, 
\begin{equation}
\label{eq:domconv1}
\begin{split}
\dist(\nabla u_n , SO(2)U_0) & \rightarrow 0 \mbox{ in } L^2(\Omega_0),\\ 
\dist(\nabla u_n , SO(2)U_1) & \rightarrow 0 \mbox{ in } L^2(\Omega_1).
\end{split}
\end{equation} 
More precisely, for $l\in\{0,1\}$
\begin{align*}
\int\limits_{\Omega_l} \dist^2(\nabla u_n, SO(2) U_l)dx\leq 100(c+1)H_n(u_n) + (100c)^2|\Omega_l \Delta \Omega_l^n|.
\end{align*}
\end{prop}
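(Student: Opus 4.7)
The plan is a two-part decomposition of the integral, using the spin-based partition $\Omega_l^n$ provided by Proposition A.2 and the pointwise lower bound of Lemma 1.2 to exchange the two-well energy for the distance to the single well $SO(2)U_l$.

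First I would write $\Omega_l = (\Omega_l \cap \Omega_l^n) \cup (\Omega_l \setminus \Omega_l^n)$, where the second set is contained in the symmetric difference $\Omega_l \Delta \Omega_l^n$. On the aligned piece $\Omega_l \cap \Omega_l^n$, every lattice point $(i/n,j/n) \in \tilde{\Omega}_l^n$ satisfies $\dist(\nabla u_n^{ij}, SO(2)U_l) \leq \bar{c}/10$ by Definition A.1. Since $\bar{c}$ is the inter-well distance, a triangle inequality will force $\dist(\nabla u_n^{ij}, SO(2)U_{1-l}) \geq 9\bar{c}/10$, so that $\dist(\nabla u_n^{ij}, K) = \dist(\nabla u_n^{ij}, SO(2)U_l)$ on this set. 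Lemma 1.2 then delivers $h_{u_n}^{ij} \gtrsim \dist^2(\nabla u_n^{ij}, K)$ pointwise, and because $\nabla u_n$ is piecewise constant on the triangles $\Delta_{ij}^{n,\pm}$ this integrates directly to
$$\int_{\Omega_l \cap \Omega_l^n} \dist^2(\nabla u_n, SO(2)U_l) \, dx \lesssim H_n(u_n).$$

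For the misaligned piece I plan to exploit the compactness of each well $SO(2)U_l$ as a continuous image of the compact group $SO(2) \cong S^1$. This furnishes a finite constant $D := \max_{Y \in SO(2)U_{1-l}} \dist(Y, SO(2)U_l)$ depending only on $a,b$, and a short triangle-inequality argument (picking $Y_{1-l}$ closest to $\nabla u_n$ in $SO(2)U_{1-l}$, then the closest point in $SO(2)U_l$ to $Y_{1-l}$) yields the pointwise bound $\dist(\nabla u_n, SO(2)U_l) \leq \dist(\nabla u_n, K) + D$, hence $\dist^2(\nabla u_n, SO(2)U_l) \leq 2 \dist^2(\nabla u_n, K) + 2D^2$. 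Integrating, using Lemma 1.2 on the first summand and the trivial volume bound $|\Omega_l \setminus \Omega_l^n| \leq |\Omega_l \Delta \Omega_l^n|$ on the second, I arrive at
$$\int_{\Omega_l \setminus \Omega_l^n} \dist^2(\nabla u_n, SO(2)U_l) \, dx \lesssim H_n(u_n) + D^2 |\Omega_l \Delta \Omega_l^n|.$$
Summing the two contributions and bookkeeping the generic constants against $100(c+1)$ and $(100c)^2$ produces the quantitative estimate.

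The qualitative $L^2$ convergence in (A.3) is then immediate, since $H_n(u_n) \leq C/n \to 0$ by the surface-energy scaling (10) and $|\Omega_l \Delta \Omega_l^n| \to 0$ by Proposition A.2. I do not anticipate any real obstacle: the only step requiring a small observation is the uniform Hausdorff bound $D$ between the two wells, which is just the compactness of $SO(2)$, and the rest is integration of the pointwise lower bound of Lemma 1.2.
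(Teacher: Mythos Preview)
Your approach is essentially the paper's: the same decomposition $\Omega_l=(\Omega_l\cap\Omega_l^n)\cup(\Omega_l\setminus\Omega_l^n)$, the same use of Lemma~1.2 to pass from $\dist^2(\cdot,K)$ to the Hamiltonian, and the same bookkeeping of the symmetric difference. The one genuine difference is your treatment of the misaligned piece. The paper splits $\Omega_l\setminus\Omega_l^n$ according to whether $|\nabla u_n|\le 100\bar c$ or $|\nabla u_n|\ge 100\bar c$, bounding the first trivially by the volume and the second by comparability of $\dist(\cdot,SO(2)U_l)$ and $\dist(\cdot,K)$ at large gradients. Your uniform Hausdorff-distance bound $\dist(\cdot,SO(2)U_l)\le \dist(\cdot,K)+D$ is cleaner and avoids the case split; both routes land at the same estimate.

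One small point to tighten: your claim that ``every lattice point $(i/n,j/n)\in\tilde\Omega_l^n$ satisfies $\dist(\nabla u_n^{ij},SO(2)U_l)\le\bar c/10$ by Definition~A.1'' is only literally true for $l=0$. For $l=1$ the set $\tilde\Omega_1^n$ is defined as the complement, so it also contains points where $h_{u_n}^{ij}>\bar c/10$ with no a~priori closeness to either well. This is easy to patch --- on that subset $\dist^2(\nabla u_n,SO(2)U_1)\le 2\dist^2(\nabla u_n,K)+2D^2$ again, and both terms are controlled by $H_n(u_n)$ since $h^{ij}>\bar c/10$ there --- but it does need a line. The paper's ``the argument is analogous'' glosses over the same asymmetry.
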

\begin{proof}
We only provide the proof for $l=0$, for $l=1$ the argument is analogous:
\begin{equation}
\label{eq:dist_wells}
\begin{split}
\int\limits_{\Omega_0} \dist^2(\nabla u_n, SO(2)U_0) d x 
&=\int\limits_{\Omega_0^n} \dist^2(\nabla u_n, SO(2)U_0) d x  + \int\limits_{\Omega_0\setminus \Omega_0^n} \dist^2(\nabla u_n, SO(2)U_0) d x\\
& \leq  H_n(u_n) + \int\limits_{\Omega_0\setminus \Omega_0^n} \dist^2(\nabla u_n, SO(2)U_0) d x.
\end{split}
\end{equation}
We continue by estimating the second term:
\begin{align*}
\int\limits_{\Omega_0\setminus \Omega_0^n} \dist^2(\nabla u_n, SO(2)U_0) d x
&\leq \int\limits_{\Omega_0\setminus \Omega_0^n} \chi_{\{|\nabla u_n|\leq 100\bar{c}\}} \dist^2(\nabla u_n, SO(2)U_0) dx\\
& \quad + \int\limits_{\Omega_0\setminus \Omega_0^n}\chi_{\{|\nabla u_n|\geq 100\bar{c}\}} \dist^2(\nabla u_n, SO(2)U_0) d x\\
& \leq (100\bar{c})^2|\Omega_0\Delta \Omega_0^n| \\
& \quad + 100 \bar{c} \int\limits_{\Omega_0\setminus \Omega_0^n} \dist^2(\nabla u_n, SO(2)U_0 \cup SO(2)U_1) d x\\
& \leq (100\bar{c})^2|\Omega_0\Delta \Omega_0^n| + 100 \bar{c} H_n( u_n).
\end{align*}
Inserting this back into (\ref{eq:dist_wells}) yields the desired bound.
\end{proof}
\begin{remark}
\label{rmk:set1}
We observe that the convergence $|\Omega_0 \Delta \Omega_0^n|\go 0$ (which follows as a consequence of the discussion of the spin system given in Appendix A) can be arbitrarily slow. Indeed, as an example, one could consider a finite number of stripes, in which $\nabla u_n \in SO(2)U_1$, with size $n^{\alpha}$ for any arbitrary $\alpha \in (0,1)$.
\end{remark}

\section{Second Derivative Control}
\label{sec:2derivative}
In this section we derive an important property of the Hamiltonian $H_n$. Although this is not directly used in our argument which leads to the $\Gamma$-limit of Theorem \ref{Th1}, this property in part explains the comparability of the continuous model from \cite{CS06} and our discrete model. In fact the Hamiltonian from Definition \ref{defi:HamiltonianII} does not only control the deviation of the gradients of $u$ from the energy wells, but also the discrete \emph{second} derivatives of $u$.
\\

In the sequel, we use $C$ to denote a universal constant which only depends only on $a$ and $b$ and may change from line to line.

\begin{lem}[Second derivative control]
\label{lem:second}
Let $u \in \mathcal{A}_n^{F_\lambda}$. Then there exists a constant $C=C(a,b)>0$ such that
\begin{align*}
n^2 |u^{i+1,j} + u^{i-1,j} - 2u^{i,j}| &\leq C n\sqrt{h^{i,j}_u},\\
n^2 |u^{i,j+1}+ u^{i,j-1} - 2u^{i,j}| &\leq C n\sqrt{h^{i,j}_u},\\
n^2 |u^{i+1,j+1} - u^{i,j+1}  - u^{i+1,j} + u^{i,j}| &\leq C n \left(\sqrt{h^{i,j}_u}+ \sqrt{h^{i+1,j}_u}+\sqrt{h^{i-1,j}_u} \right).
\end{align*}  
More concisely, we will also abbreviate this as $|\nabla^2_n u| \leq C n \sqrt{h^{i\pm 1, j\pm 1}_u}$, where $\nabla^2_n $ denotes the tensor of second finite differences of $u$.
\end{lem}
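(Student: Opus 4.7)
The three claims can each be rewritten as a pointwise bound on a first-order difference of partial derivatives. Using the convention $\partial_1 u^{i,j}=n(u^{i+1,j}-u^{i,j})$ and $\partial_2 u^{i,j}=n(u^{i,j+1}-u^{i,j})$ from Remark \ref{rmk:interpol}, one checks
\begin{align*}
n^2\bigl(u^{i+1,j}+u^{i-1,j}-2u^{i,j}\bigr) &= n\bigl(\partial_1 u^{i,j}-\partial_1 u^{i-1,j}\bigr),\\
n^2\bigl(u^{i,j+1}+u^{i,j-1}-2u^{i,j}\bigr) &= n\bigl(\partial_2 u^{i,j}-\partial_2 u^{i,j-1}\bigr),\\
n^2\bigl(u^{i+1,j+1}-u^{i,j+1}-u^{i+1,j}+u^{i,j}\bigr) &= n\bigl(\partial_2 u^{i+1,j}-\partial_2 u^{i,j}\bigr).
\end{align*}
Hence the claim reduces to estimates of the type $|D-D'|\leq C\sqrt{h^{i,j}_u}$ (with extra $\sqrt{h^{\cdot,\cdot}_u}$ neighbours in the mixed case) for pairs of partial derivatives on adjacent grid triangles. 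I present the first case; the second is obtained by swapping the roles of the horizontal and vertical directions (and of $a$ and $b$), and the third is handled below by bridging through a common horizontal derivative.

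The argument splits according to the size of $|U^{i,j}|$ and $h^{i,j}_u$. If $|U^{i,j}|\geq 20(\bar c+1)$, the cut-off $\gamma$ in Definition \ref{defi:HamiltonianII} vanishes and the two-growth of $k$ gives $h^{i,j}_u=k(U^{i,j})\geq c\bigl(|\partial_1 u^{i,j}|^2+|\partial_1 u^{i-1,j}|^2\bigr)$, so the triangle inequality suffices. If $|U^{i,j}|$ is bounded and $h^{i,j}_u\geq\delta_0$ for a universal $\delta_0>0$, the derivatives are a priori bounded and the estimate is again immediate, with a constant depending on $\delta_0$. The substantive regime is therefore $h^{i,j}_u\leq\delta_0$ with $\delta_0$ chosen sufficiently small; Lemma \ref{lem:lower} then yields $\dist(\nabla u^{i,j},K)\leq C\sqrt{\delta_0}$, and we may assume that $\nabla u^{i,j}$ is close to $SO(2)U_0$ (the other case being symmetric). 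In this regime the bracket $\bar h^{i,j}_{u,U_1}$ of Remark \ref{rmk:bracket} is uniformly bounded below because $(|\partial_1 u^{i,j}|^2-b^2)^2\geq\tfrac12(a^2-b^2)^2$ for $\delta_0$ small enough. Since here $\gamma=1$ and hence $h^{i,j}_u=\bar h^{i,j}_{u,U_0}\bar h^{i,j}_{u,U_1}$, one obtains $\bar h^{i,j}_{u,U_0}\leq Ch^{i,j}_u$.

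Each non-negative summand of $\bar h^{i,j}_{u,U_0}$ is now controlled by $Ch^{i,j}_u$. The length terms yield $\bigl||\partial_1 u^{i,j}|-a\bigr|$, $\bigl||\partial_1 u^{i-1,j}|-a\bigr|$, $\bigl||\partial_2 u^{i,j}|-b\bigr|$, $\bigl||\partial_2 u^{i,j-1}|-b\bigr|\leq C\sqrt{h^{i,j}_u}$, while the orthogonality terms give $|(\partial_1 u^{i\pm 1,j},\partial_2 u^{i,j\pm 1})|\leq Ch^{i,j}_u$. In the plane, a vector of length approximately $a$ which is approximately orthogonal to a given non-zero vector is determined up to sign; the non-interpenetration conditions $\det(\partial_1 u^{i,j},\partial_2 u^{i,j})>0$ on $\Delta^{n,+}_{i,j}$ and $\det(\partial_1 u^{i-1,j},\partial_2 u^{i,j-1})>0$ on $\Delta^{n,-}_{i,j}$ implicit in $u\in\mathcal{A}_n^{F_\lambda}$ fix this sign. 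A comparison of $\partial_2 u^{i,j}$ with $\partial_2 u^{i,j-1}$, both of length $\approx b$ and both nearly orthogonal to $\partial_1 u^{i,j}$, together with the same orientation information, shows that they are parallel (not antiparallel), so that the two orientation constraints on $\partial_1 u^{i,j}$ and $\partial_1 u^{i-1,j}$ are consistent and yield $|\partial_1 u^{i,j}-\partial_1 u^{i-1,j}|\leq C\sqrt{h^{i,j}_u}$. For the mixed estimate the same mechanism applies after bridging $\partial_2 u^{i+1,j}$ and $\partial_2 u^{i,j}$ through their common near-orthogonality to the horizontal derivative $\partial_1 u^{i,j}$, which figures both in $\bar h^{i,j}_{u,U_0}$ and in $\bar h^{i+1,j}_{u,U_0}$; the further orientation check uses the neighbouring bracket $\bar h^{i-1,j}_{u,U_0}$, accounting for the additional $\sqrt{h^{i-1,j}_u}$ on the right-hand side.

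The principal obstacle is precisely this orientation-selection step: the bracket $\bar h^{i,j}_{u,U_0}$ alone vanishes not only on genuine $U_0$-affine configurations but also on certain orientation-reversing ``folded'' ones, so the length and orthogonality bounds in themselves do not control $\partial_1 u^{i,j}-\partial_1 u^{i-1,j}$. The non-interpenetration condition built into admissibility is essential to propagate orientation information between the $+$ and $-$ triangles around $(i,j)$ and thereby to rule out the folded scenarios, yielding the linear dependence on $\sqrt{h^{i,j}_u}$ with constant depending only on $a$ and $b$.
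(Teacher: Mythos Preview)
Your argument is correct and follows essentially the same route as the paper: a case split into the small-energy regime (where one bracket is bounded below and the other yields length and orthogonality control, with non-interpenetration fixing the orientation) and the large-energy regime (where the triangle inequality suffices). You are somewhat more explicit than the paper about the cut-off region $|U^{i,j}|\geq 20(\bar c+1)$ and about the role of non-interpenetration in the orientation step, but the underlying mechanism is identical.
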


\begin{proof}
We recall that the density of the Hamiltonian can be rewritten in terms of the lengths and angles of the deformation (c.f. equation (\ref{eq:bracket1})). In the sequel, for notational convenience we will use the abbreviations $v^{i,j}:= \nabla_1 u^{i,j}$, $w^{i,j}:= \nabla_2 u^{i,j}$.\\ 

\emph{Step 1: Horizontal difference quotients.}
We begin by estimating the horizontal second order difference quotient. Here we distinguish two cases.\\
\emph{Step 1a: Smallness of $h$.}
In the first case we assume that $h^{i,j}_u \leq c$, where $c=c(a,b)>0$ is a constant that is much smaller than the distance between the wells (and much smaller than one). In particular, we may assume that one of the brackets in the definition of $h_u^{ij}$ is much smaller than one while the other is of the order $C(a,b)$, which is a universal constant that only depends on $a,b$. Without loss of generality, we assume that the first bracket in the definition of $h_u^{ij}$, i.e. (\ref{eq:bracket1}) is the small one.
Thus, we obtain that for some constant $C=C(a,b)$
\begin{align*}
 (|v^{i,j}|-|v^{i-1,j}|)^2 \leq 4((|v^{i,j}|-a)^2 + (|v^{i-1,j}|-a)^2) \leq Ch^{i,j}_u.
\end{align*}
Moreover, we infer that
\begin{align*}
|(v^{i,j},w^{i,j})| + |(v^{i-1,j},w^{i,j})| \leq C h^{i,j}_u.
\end{align*}
However, by linear algebra and the non-interpenetration condition, this implies that
\begin{align*}
n^4 |u^{ij} + u^{i-1,j} - 2u^{i,j}|^2 = n^2 |v^{ij}-v^{i-1,j}|^2 \leq Cn^2h^{i,j}_u,
\end{align*}
which yields the desired estimate.\\
\emph{Step 1b: $h_u^{ij} \geq c$.} In the case that $h_u^{ij} \geq c>0$ for some fixed constant $c=c(a,b)$, we directly use the triangle inequality:
\begin{align*}
n^4 |u^{ij} + u^{i-1,j} - 2u^{i,j}|^2 &\leq 4n^2(|v^{i,j}|^2  + |v^{i-1,j}|^2) \\
&\leq  C n^2 h^{i,j}_u+ n^2 C(a,b) \leq C n^2 h^{i,j}_u.
\end{align*}
Here the last estimate follows from the lower bound assumption on $h$.
Combining the results of Step 1a and Step 1b thus yields the full control on the horizontal second difference quotient. A similar estimate holds true for the vertical second difference quotient.\\

\emph{Step 2: Mixed second order differences.}
Again we consider two cases now depending on the local energy density of two neighboring points. \\
\emph{Step 2a: Small local energy density.} We assume that for a sufficiently small constant $c=c(a,b)>0$ we have $h^{i,j} + h^{i+1,j} \leq c(a,b)$. In this case we may assume that $\nabla u^{i,j}$ and $\nabla u^{i+1,j}$ are sufficiently close to a common energy well, i.e. we may for instance assume that the first bracket in the definition of $h_u$ is controlled in terms of $h_u$ for both points $(i,j)$ and $(i+1,j)$ (this follows directly from the definition of $\tilde{h}_u$ but can also be inferred from Lemma  \ref{lem:lower}. Then,
\begin{align*}
(|w^{i,j}| -a)^2 &\leq h^{i,j}_u \mbox{ and } (|w^{i+1,j}| -a)^2 \leq h^{i+1,j}_u,
\end{align*}  
which implies that
\begin{align}
\label{eq:lengths}
(|w^{ij}|- |w^{i+1,j}|)^2 \leq  h^{i,j}_u+h^{i+1,j}_u.
\end{align}
Moreover,
\begin{equation}
\label{eq:angles}
\begin{split}
(|v^{i,j}|- b)^2 & \leq  h^{i,j}_u + h^{i+1,j}_u,\\
|(w^{i,j}, v^{i,j})|& \leq h^{i,\,j}_u,\\
|(w^{i+1,j}, v^{i,j})|& \leq h^{i,j}_u+h^{i+1,j}_u.
\end{split}
\end{equation}
Arguing as above, by linear algebra and the non-interpenetration condition (which can also be interpreted as a condition on the orientation of the image triangles), the combination of (\ref{eq:lengths}) and (\ref{eq:angles}) implies that
\begin{align*}
n^4 |u^{i+1,j+1} - u^{i,j+1}  - u^{i+1,j} + u^{i,j}|^2 &\leq n^2 |w^{ij}- w^{i+1,j}|^2 \\
& \leq n^2( h^{i,\,j}_u+h^{i+1,j}_u),
\end{align*}
which is the desired result.\\
\emph{Step 2b: Large local energy.} We assume that $h^{i,\,j} + h(\nabla u^{i+1,j})\geq c(a,b)>0$. As in Step 1a, we directly conclude by using the triangle inequality:
\begin{align*}
& n^4 |u^{i+1,j+1}-u^{i+1,j}-u^{i,j+1}+u^{ij}|^2 \leq 4 n^4(|u^{i+1,j+1} - u^{i+1,j}|^2 + |u^{i,j+1}-u^{ij}|^2)\\
& \quad \leq n^2(4(|w^{ij}|-a)^2 + 8(a^2 + b^2) + 4(|w^{i+1,j}|-a)^2)\\
& \quad \leq 4 n^2  (h^{i,\,j}_u +h^{i+1,j}_u) + n^2 C(a,b)\\
& \quad \leq C(a,b)n^2 (h^{i,\,j}_u + h^{i+1,j}_u).
\end{align*}
This concludes the proof.
\end{proof}

\section{Sketch of Proof of the Discrete Coarea Formula}
\label{app:Coarea}

In this section, we give a (very rough) sketch of the proof of the discrete coarea formula. More precisely, we show that for any grid function $f^{ij}:\Omega_n \rightarrow \mR$ the following holds: 
\begin{align}
\int\limits_{0}^{\infty}\Per_M(\{(i,j) :\,f^{ij}\geq t\})dt \leq C \sum\limits_{(i,j)\in nM} \frac{1}{n^2} |\nabla_n f^{ij}|.
\lb{eq:coarea1}
\end{align}
Similar as in the continuous case (c.f. \cite{EvGa15}) this is a consequence of an integration by parts argument in combination with the bathtub principle: We consider the super-level sets $E_t:= \{(i,j): f^{ij}\geq t\}$ associated with the function $f^{ij}$. Let  $\sigma^{ij}= (\sigma^{ij}_1, \sigma^{ij}_2):\Omega_n \rightarrow \mR^2$ be any test function with $|\sigma^{ij}|\leq 1$. Then we have
\begin{equation}
\label{eq:coarea_pr}
\begin{split}
&\sum\limits_{(i,j)\in n M} \frac{1}{n^2}\left[n(f^{i+1,j}-f^{i,j})\sigma_{1}^{i,j} + n(f^{i,j+1}-f^{i,j})\sigma_{2}^{i,j}  \right] \\
&=
\int\limits_{0}^{\infty} \sum\limits_{(i,j)\in \partial (n E_t)}\frac{1}{n} (\sigma^{i,j}, \nu^{i,j}) dt.
\end{split}
\end{equation}
Here $\nu(x): \partial (n E_t) \rightarrow \mR^2$ denotes ``the outer unit normal field'' to $\partial (n E_{t})$. As $\partial (n E_t)$ is only piecewise affine, we define it as the classical outer unit normal field at all points at which the boundary is $C^1$. Due to the choice of our interpolation, the only possibility of violating the $C^{1}$ condition for the boundary is by forming corners of $45^{\circ}, 90^{\circ}, 180^{\circ}, 225^{\circ}, 270^{\circ}$. These corners are given as the intersection of two $C^{1}$ curves which are tangential to two grid edges. Hence, at these corner points $(i,j)$ we define $\nu^{ij}=  \left(\lim\limits_{x_1 \rightarrow (i,j) }\nu(x_1) + \lim\limits_{x_2 \rightarrow (i,j)}\nu(x_2)\right)$, where $x_1,x_2$ are points approaching the corner along the two intersecting grid edges.\\ 
We note that the left hand side of (\ref{eq:coarea_pr}) is clearly bounded from above by
\begin{align*}
\sum\limits_{(i,j)\in n M} \frac{1}{n^2}\left[n|f^{i+1,j}-f^{i,j}| + n|f^{i,j+1}-f^{i,j}| \right].
\end{align*}
Hence,
\begin{align*}
\int\limits_{0}^{\infty}\sum\limits_{(i,j)\in \partial (n E_t)} \frac{1}{n} (\sigma^{i,j},\nu^{i,j})dt \leq \sum\limits_{(i,j)\in n M} \frac{1}{n^2}\left[n|f^{i+1,j}-f^{i,j}| + n|f^{i,j+1}-f^{i,j}| \right].
\end{align*}
Choosing $\sigma^{ij}$ such that $(\sigma^{i,j},\nu^{i,j})=1$ on $\partial (n E_t)$, proves (\ref{eq:coarea1}).

\section{Proof of the Well-Definedness of the Algorithm for the Perturbed Grid Construction}
\label{app:alg}

In this section we present a proof of the well-definedness of the algorithm which yields the new grid in Step 4a of the Proof of Proposition \ref{prop:reduc}:

\begin{lem}
\label{lem:alg}
The algorithm in Step 4a of the Proof of Proposition \ref{prop:reduc} is well-defined, i.e. it is possible to choose the parameter $\theta>0$ such that there exists a number $c_{\theta}>0$ with the property that the volume fraction of possible choices in each step is non-empty and bounded from below: 
\begin{align}
\label{eq:welldef}
|\{ \text{possible choices in step } m\}| \geq c_{\theta} |B_m |.
\end{align}
\end{lem}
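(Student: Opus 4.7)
The plan is to choose $\theta$ small as a function of a uniform bound $N$ on the coordination number of vertices in the reference grid $G_R^n$, and then to combine Proposition \ref{prop:twell} with a Fubini-type slicing argument and a union bound over the (bounded) number of neighbors of each vertex.

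First, I would fix a constant $N$ that uniformly bounds the number of neighbors of any vertex in $G_R^n$; this follows directly from the explicit dyadic construction of $G_R^n$ in Step 2 of the proof of Proposition \ref{prop:reduc}, in which each vertex has a controlled (independent of $n$, $k$) number of adjacent edges. Next, for each ordered pair of neighboring vertices $v_m, v_{m'}$ in $G_R^n$, the balls $B_m = B_{\alpha h_k}(v_m)$, $B_{m'} = B_{\alpha h_k}(v_{m'})$ satisfy (after rescaling by $h_k$) the hypotheses of Proposition \ref{prop:twell}, with the smallness condition absorbed into $\eta$ via the energy bound \eqref{eq:Poinc}. Proposition \ref{prop:twell} then yields a set $U_{m,m'} \subset B_m \times B_{m'}$ of rigid pairs with $|U_{m,m'}| \geq (1-\theta)|B_m \times B_{m'}|$.

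Second, applying Chebyshev--Fubini to $U_{m,m'}$, I would define the forward slice
\[
G_{m\to m'} := \Bigl\{ w \in B_m : \bigl|\{ w' \in B_{m'} : (w,w') \in U_{m,m'}\}\bigr| \geq (1-\sqrt{\theta})|B_{m'}|\Bigr\},
\]
which satisfies $|G_{m\to m'}| \geq (1-\sqrt{\theta})|B_m|$. Intersecting over the at most $N$ neighbors of $m$ gives $G_m := \bigcap_{m'\sim m} G_{m\to m'}$ with $|G_m|\geq (1-N\sqrt{\theta})|B_m|$. This set $G_m$ is exactly what is meant in the algorithm by ``points forming a rigid pair with many points of all neighboring balls''.

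Third, I would argue by induction on the step index $m$ that the vertex $w_m$ can always be chosen from a subset of $B_m$ of measure at least $c_\theta |B_m|$. When processing vertex $m$, at most $N$ of its neighbors $m'$ have previously been fixed (those with $m' < m$); since each such $w_{m'}$ was selected from $G_{m'\to m}$, the set $\{w \in B_m : (w, w_{m'}) \notin U_{m',m}\}$ has measure at most $\sqrt{\theta}|B_m|$. A union bound over these at most $N$ already-fixed neighbors, together with the intersection with $G_m$, leaves a set of admissible choices of measure at least $(1 - 2N\sqrt{\theta})|B_m|$. Choosing $\theta \leq 1/(16 N^2)$ gives $c_\theta := 1/2$, establishing \eqref{eq:welldef}.

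The main (though minor) obstacle is the bookkeeping: the forward-looking condition (stay in $G_m$ so that future neighbors still have plenty of good partners) and the backward-looking condition (be rigid with every already-fixed neighbor) could in principle conflict, but the Fubini step converts the global $\theta$-loss from Proposition \ref{prop:twell} into a per-edge $\sqrt{\theta}$-loss, after which a crude union bound over the $O(N)$ neighbors controls the total. Everything hinges on the uniform coordination bound $N$, which is immediate from the dyadic refinement structure of $G_R^n$.
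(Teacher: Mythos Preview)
Your argument is correct and in fact cleaner than the paper's. The key difference is that you convert the product-set bound $|U_{m,m'}|\geq (1-\theta)|B_m\times B_{m'}|$ into a per-vertex bound via the Fubini--Chebyshev slice: choosing $w_m$ only from $G_m=\bigcap_{m'\sim m}G_{m\to m'}$ guarantees \emph{a priori} that fixing $w_m$ deletes at most a $\sqrt\theta$-fraction from each neighboring ball, so the loss at every later step is an absolute $O(N\sqrt\theta)$ and no accumulation can occur. The paper instead analyzes a worst-case one-dimensional chain, where the deleted fraction $x_m$ obeys the recursion $x_m=\theta/(1-x_{m-1})$, shows this converges for $\theta\leq 1/4$, and then argues somewhat informally that the two-dimensional grid with finitely many neighbors reduces to this picture after shrinking $\theta$.

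What each approach buys: the recursion gives a sharper threshold ($\theta\leq 1/4$) in the pure chain case and a nice geometric picture of how the bad set could grow, but the passage to the actual grid with several neighbors is left sketchy. Your slicing argument sacrifices the sharp constant (you need $\theta\leq 1/(16N^2)$) but treats the full grid directly, makes the role of the coordination bound $N$ explicit, and avoids any iteration: the forward-looking constraint $w_m\in G_m$ decouples the steps and turns the problem into a single union bound.
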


The main difficulty here is to ensure that in the selection of the \emph{possible choices in step $m+1$} not too many points are deleted. In particular, we have to ensure, that although for each pair $B_m, B_{m+1}$ there always is a volume fraction of $(1-\theta)|B_m \times B_{m+1}|$ rigid pairs, these pairs do not involve too many points which had to be deleted during one of the previous $m$ steps. If this were the case, it could in principle occur that the algorithm terminates without having constructed a new grid. 

\begin{figure}[t]
\centering
\includegraphics[width=0.9\textwidth]{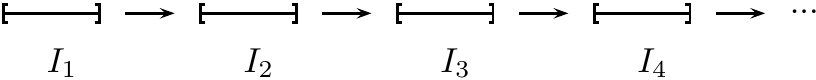}
\caption{The one-dimensional chain of intervals described in step 1. Without loss of generality we may assume that the intervals are given as $I_n=[n-1,n]$ with $n\in \N$. The arrow between the intervals indicates the neighboring relation. In a \emph{chain of intervals} each of the intervals only has one neighbor.}
\label{fig:chain}
\end{figure}

\begin{proof}
We divide the proof into two steps and first show the analogous result in the setting of one-dimensional intervals of equal length that form a one-dimensional chain (c.f. Figure \ref{fig:chain}). In the second step we then show that the our algorithm essentially reduces to the previous setting.\\

\begin{figure}[t]
\includegraphics[width=0.47\textwidth]{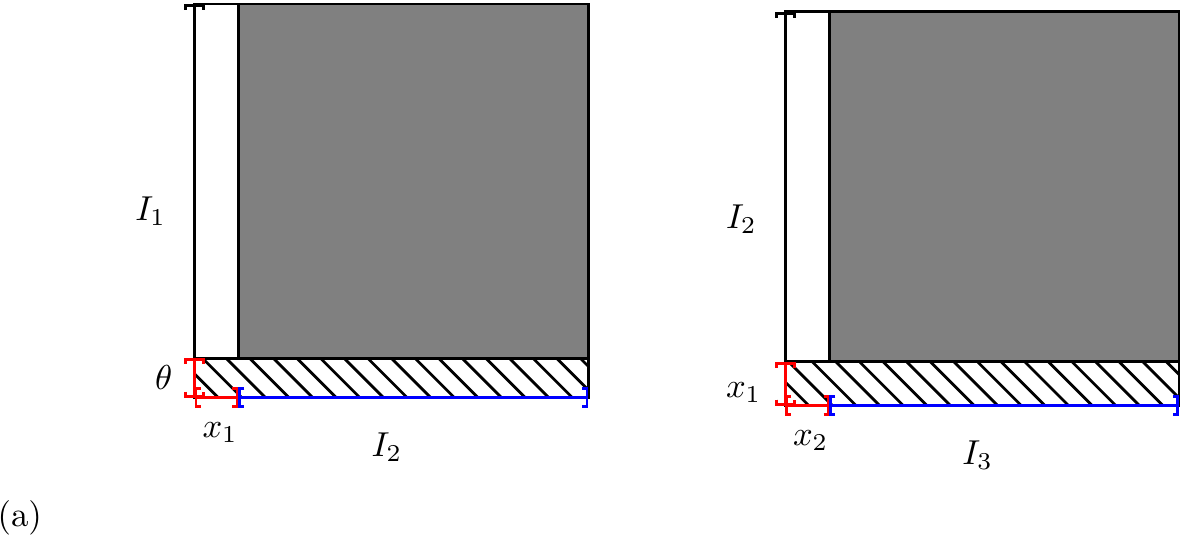}
\includegraphics[width=0.47\textwidth]{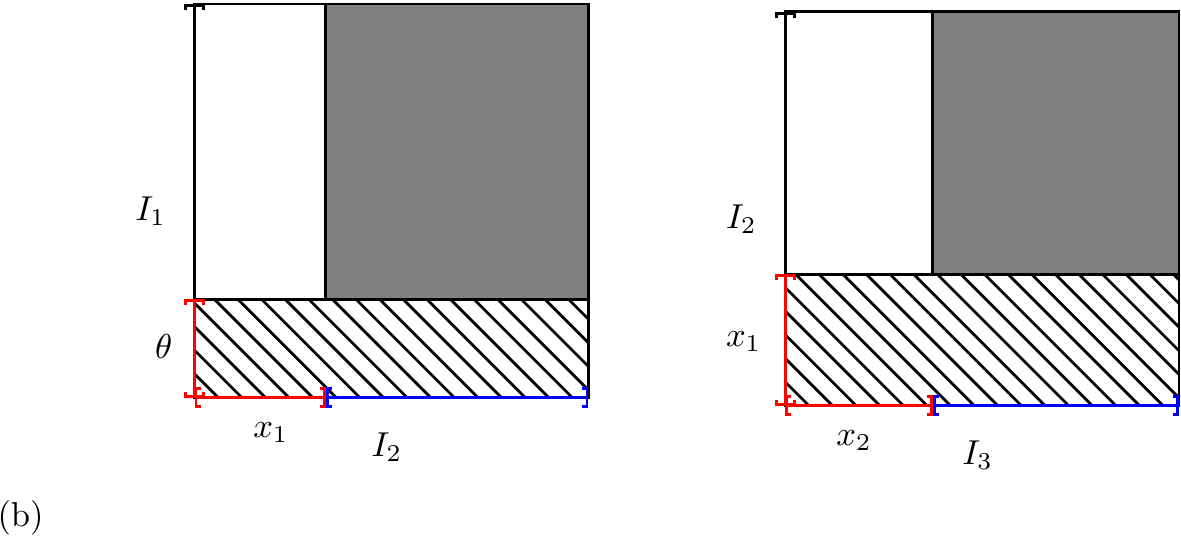}
\caption{\small{The worst case scenario described in Step 1 in the proof of Lemma \ref{lem:alg}. Although the overall volume fraction of rigid points between two intervals $I_m$, $I_{m+1}$ is always given by $(1-\theta)|I_m \times I_{m+1}|$, it is in principle possible that the \emph{effectively} useful volume fraction in step $m$ of rigid pairs in the algorithm is smaller than that. This is due to the possibility that a lot of rigid points are given by pairs $(x,y)$ with $x\in I_{m}$, $y\in I_{m+1}$ such that $x\in I_{m}^{\text{bad}}$, i.e. that many points $x$ lie in the set which had to be deleted from being a possible choice in a previous step of the algorithm. \\
In the above figure this is indicated for $\theta = 0.1$ (a) and $\theta = 0.25$ (b). Here the rigid points are given by the union of the dashed and the gray rectangles. The white rectangle is the set of non-rigid points between the intervals. In the first step, the intervals $I_0$ and $I_1$ have a volume fraction of $(1-\theta)|I_0 \times I_1|$ rigid pairs. Hence after deleting the non-rigid pairs; a volume fraction of at least $\theta$ points of $I_1$ is rigid. However, a volume fraction of up to $\theta$ of $I_1$ consists of bad points, which had to be deleted. In the next step, which is here depicted as the first schematic illustration in (a), the intervals $I_1,I_2$ still have a volume fraction of $(1-\theta)|I_1 \times I_2|$ rigid points. These are schematically depicted as the union of the dashed and the gray rectangles. However, it is possible that a large amount of rigid pairs involve points in $I_1^{\text{bad}}$. These are indicated as the points in the dashed rectangle (the red interval in the vertical axis corresponds to the bad points $I_1^{\text{bad}}$ of $I_1$. Thus, in order to construct the new perturbed grid, only the gray pairs are of use. However, this implies that in $I_2$ the red interval of length $x_1^{\theta}$ is deleted by the algorithm, yielding a bad set $I_2^{\text{bad}}$ of length $x_1^{\theta}:= \frac{\theta}{1-\theta}$). In the next step, which is depicted in the second illustration in (a), this could happen again: Again a large fraction of rigid points is given by pairs in which the first component lies in $I_2^{\text{bad}}$. Thus, again, only the gray square can be used as rigid points in order to construct the new grid. Thus, $|I_2^{\text{bad}}|=: x_2^{\theta}= \frac{\theta}{1-x_1^{\theta}}$}. \\
As (a) indicates, for $\theta = 0.1$ the sequence $x_m^{0.1}$ converges quite fast to the value $\bar{x}^{0.1} \sim 0.112702$. The figure (b) on the right depicts the same scenario for the $\theta = 0.25$. This is the largest value of $\theta$ for which convergence still holds with $\bar{x}^{0.25}=0.5$. For larger values of $\theta$ the algorithm terminates before having created a new grid.}
\label{fig:step1}
\end{figure}

\emph{Step 1:} We argue in the context of one-dimensional intervals of equal size, which form a one-dimensional chain (c.f. Figure \ref{fig:chain}). More precisely, we imagine that we have a partition of the real half-line into intervals $I_n:=[n,n+1]$ for $n\in \N\cup\{0\}$. The end-points are the vertices of our one-dimensional grid. We assume that we have an analogue of Proposition \ref{prop:twell}, asserting that for each neighboring pair $I_n, I_{n+1}$ of intervals, the volume of rigid pairs $(x,y)$ of points in $I_n \times I_{n+1}$ has volume at least $(1-\theta)|I_n \times I_{n+1}|$. We apply the algorithm described in Step 4a of the proof of Proposition \ref{prop:reduc} to this one-dimensional ``chain of intervals'' (where $B_m$ is replaced by $I_m$). We claim that this algorithm is well-defined in the sense of (\ref{eq:welldef}) and hence does not terminate before having constructed a new perturbed grid.\\
To this end, we prove that at any stage of the algorithm the points which are rigid in $I_m$ and are \emph{possible choices in step $m+1$} never become the empty set. On the contrary, we show that they satisfy the bound (\ref{eq:welldef}). We claim that this is true, since $\theta$ (which is fixed throughout the algorithm and in particular does not depend on the step $m$) can be chosen sufficiently small. Indeed, for each pair $I_m$, $I_{m+1}$ the volume of rigid pairs $(x,y)\in I_m \times I_{m+1}$ is $(1-\theta)|I_m \times I_{m+1}|$. However, this might be diminished by the points which have been removed in the previous steps of the algorithm (as it could be the case that all of these deleted points were rigid points for the next step, c.f. Figure \ref{fig:step1}). Hence the effective volume of rigid pairs could be smaller than  $(1-\theta)|I_m \times I_{m+1}|$. In this context the worst case scenario is given by the following setting: All points of $I_{m+1}$ form rigid pairs with the ``bad set'' $I_m^{\text{bad}}$ of $I_{m}$, i.e. with those points which were removed from being possible choices in the previous steps of the algorithm, and the volume of the points in $I_{m+1}$ which form rigid pairs with points in $I_m\setminus I_m^{\text{bad}}$ is minimized (c.f. Figure \ref{fig:step1}). 
In the first step of the algorithm the ``bad set'' $I_m^{\text{bad}}$ can be of measure at most $\theta$. However, in principle this could increase in the next steps. We have to show that the ``bad set'' remains small (depending on $\theta$). Indeed, following the description of the previous worst case scenario, we estimate the ``bad set'' $I_m^{\text{bad}}$. Computing the volume of the bad set in step $m$ (c.f. Figure \ref{fig:step1}), we infer that the bad set $I_m^{\text{bad}}$ is bounded by the solution of the recursion relation
\begin{align*}
x_m^{\theta} = \frac{\theta}{1-x_{m-1}^{\theta}}, \ x_0^{\theta}=\theta.
\end{align*}
However, if $\theta\leq \frac{1}{4}$, this recursion relation converges to the limit $\bar{x}^{\theta}:= \frac{1}{2}(1-\sqrt{1-4\theta})$, due to monotonicity. Hence, the bad set remains bounded. This yields our claim (\ref{eq:welldef}).\\

\emph{Step 2:} We claim that the previous steps implies that the grid construction algorithm in the proof of Proposition \ref{prop:reduc} works for our two-dimensional set-up. First we note that the fact that the balls become smaller does not matter, as the result only depends on the respective volume fractions involved. Secondly, we note that also in the two-dimensional case, we have to rule out the worst case scenario of an accumulating ``bad set'' and that the recursion relation from above would still give the desired bound if there was only a single neighbor to each vertex. Finally, we notice that the presence of \emph{finitely} many neighboring vertices (instead of having a single vertex only) does not change the convergence of the algorithm (if $\theta$ is decreased according to the number of possible neighboring vertices). This is a consequence of the fact that in comparison to the setting in step 1, only finitely many times additional points are deleted in the presence of several neighbors.
\end{proof}

\end{appendix}

\bibliography{bibliography}
\bibliographystyle{alpha} 
\clearpage
\addtocounter{tocdepth}{2}

\end{document}